\numberwithin{equation}{section}
\DeclareMathAlphabet{\pazocal}{OMS}{zplm}{m}{n}
\def\eps{\varepsilon }
\newcommand\R{\mathbb R}
\def\eps{\varepsilon}
\newcommand\br{\begin{remark}}
\newcommand\er{\end{remark}}
\newcommand\bp{\begin{pmatrix}}
\newcommand\ep{\end{pmatrix}}
\newcommand{\be}{\begin{equation}}
\newcommand{\ee}{\end{equation}}
\newcommand\ba{\begin{equation}\begin{aligned}}
\newcommand\ea{\end{aligned}\end{equation}}
\newcommand{\bap}{\begin{app}}
\newcommand{\eap}{\end{app}}
\newcommand{\begs}{\begin{exams}}
\newcommand{\eegs}{\end{exams}}
\newcommand{\beg}{\begin{example}}
\newcommand{\eeg}{\end{exaplem}}
\newcommand{\bpr}{\begin{proposition}}
\newcommand{\epr}{\end{proposition}}
\newcommand{\bt}{\begin{theorem}}
\newcommand{\et}{\end{theorem}}
\newcommand{\bc}{\begin{corollary}}
\newcommand{\ec}{\end{corollary}}
\newcommand{\bl}{\begin{lemma}}
\newcommand{\el}{\end{lemma}}
\newcommand{\bd}{\begin{definition}}
\newcommand{\ed}{\end{definition}}
\newcommand{\brs}{\begin{remarks}}
\newcommand{\ers}{\end{remarks}}
\newcommand{\A }{\mathcal{A}}
\newcommand{\RR}{{\mathbb R}}
\newcommand{\CC}{{\mathbb C}}
\newcommand{\ta}{{\widetilde{a}}}
\newcommand{\tA}{{\widetilde{A}}}
\newcommand{\tB}{{\widetilde{B}}}
\newcommand{\tF}{{\widetilde{F}}}
\newcommand{\tM}{{\widetilde{M}}}
\newcommand{\tN}{{\widetilde{N}}}
\newcommand{\tS}{{\widetilde{S}}}
\newcommand{\tU}{{\widetilde{U}}}
\newcommand{\tW}{{\widetilde{W}}}
\newcommand{\tte}{{\widetilde{\theta}}}
\newcommand{\rmi}{{\mathrm{i}}}
\newcommand{\rmd}{{\mathrm{d}}}
\newcommand{\rms}{{\mathrm{s}}}
\newcommand{\rmc}{{\mathrm{c}}}
\newcommand{\rmb}{{\mathrm{b}}}
\newcommand{\ooa}{{\overline{a}}}
\newcommand{\im}{{\rm im }}
\newtheorem{theorem}{Theorem}[section]
\newtheorem{proposition}[theorem]{Proposition}
\newtheorem{corollary}[theorem]{Corollary}
\newtheorem{lemma}[theorem]{Lemma}
\theoremstyle{remark}
\newtheorem{remark}[theorem]{Remark}
\theoremstyle{definition}
\newtheorem{definition}[theorem]{Definition}
\newtheorem{example}[theorem]{Example}
\newcommand\cA{{\mathcal A}}
\newcommand\cB{{\mathcal B}}
\newcommand\cV{{\mathcal V}}
\newcommand\cL{{\mathcal L}}
\newcommand\cE{{\mathcal E}}
\newcommand\cF{{\mathcal F}}
\newcommand\cM{{\mathcal M}}
\newcommand\bH{{\mathbb H}}
\newcommand\bW{{\mathbb W}}
\newcommand\bX{{\mathbb X}}
\newcommand{\tH}{{\widetilde{H}}}
\newcommand{\tT}{{\widetilde{T}}}
\newcommand{\tC}{{\widetilde{C}}}
\newcommand{\tlambda}{{\widetilde{\lambda}}}
\newcommand{\ohh}{\overline{h}}
\newcommand{\og}{\overline{g}}
\newcommand{\ow}{\overline{w}}
\newcommand{\dom}{\text{\rm{dom}}}
\newcommand{\hatt}{\widehat}
\newcommand{\beq}{\begin{equation}}
\newcommand{\eeq}{\end{equation}}
\title{Uniform bounds of families of analytic semigroups and Lyapunov Linear Stability of planar fronts
}
\author{ Yuri Latushkin}
\address{University of Missouri, Columbia, MO 65211}
\email{latushkiny@missouri.edu}
\thanks{\hspace*{-0.17in}\textbf{AMS MSC 2020 Mathematics Subject Classification:} 37L15, 47D06, 34G10\\\textbf{Keywords:} Analytic semigroups, uniform exponential bounds, planar traveling waves, Lyapunov linear stability, reaction-diffusion systems, bidomain equation. \\Y.L. was supported by the NSF grants  DMS-1710989 and DMS-2106157, and would like to thank the Courant Institute of Mathematical Sciences and especially Prof.\ Lai-Sang Young for the opportunity to visit CIMS.\\A. P. research  was partially supported under the Simons Foundation Grant nr. 524928.}
\author{Alin Pogan}
\address{Miami University, Oxford, OH 45056}
\email{pogana@miamioh.edu}
\begin{document}

\begin{abstract}
We study families of analytic semigroups, acting in a Banach space, and depending on a parameter,  and give sufficient conditions for existence of uniform  with respect to the parameter norm bounds using spectral properties of the respective semigroup generators. In particular,  we use estimates of the resolvent operators of the generators along vertical segments to estimate the growth/decay rate of the norm for the family of analytic semigroups. These results are applied to prove the Lyapunov linear stability of planar traveling waves of systems of reaction-diffusion equations,  and the bidomain equation,  important in electrophysiology.
\end{abstract}

\maketitle

\vspace{0.3cm}
\begin{minipage}[h]{0.48\textwidth}
	\begin{center}
		University of Missouri \\
		Department of Mathematics\\
		810 East Rollins Street\\ Columbia, MO 65211, USA
	\end{center}
\end{minipage}
\begin{minipage}[h]{0.48\textwidth}
\begin{center}
Miami University\\
Department of Mathematics\\
301 S. Patterson Ave.\\
Oxford, OH 45056, USA
\end{center}
\end{minipage}

\vspace{0.3cm}


\section{Introduction }\label{s1}

The problem of finding optimal bounds for the norm of a semigroup of linear operators is very important and well-studied in the asymptotic theory of 
semigroups, \cite{ABHN,CL,DK,EN,lunardi,P}. Of particular importance are characterizations that relate the growth/decay bounds to the spectral properties of the generator of the semigroup. We mention here the celebrated Gearhard-Pr\" uss Theorem, \cite{Ge,Pr}, and refer to \cite[Section 5.7]{ABHN} for further references  or to \cite{HS1,HS2,LY} for more recent results. 

For families of semigroups depending on a parameter, the problem of finding estimates for their norms that are {\em uniform} with respect to the parameter is significantly less studied.
In this paper we aim to find sufficient conditions that guarantee uniform with respect to the parameter exponential decay estimates for families of analytic semigroups in Banach spaces.
These results are important in the study of Lyapunov linearized stability of traveling waves in systems of partial differential equations. In many instances, such as parabolic systems of partial differential equations, the linearization along the wave is a \textit{sectorial} operator, hence it generates an \textit{analytic semigroup}. In the case when the linearization along a planar wave is a differential operator in a multidimensional domain, the generator of the semigroup is typically similar to a multiplication operator by an operator valued function of certain parameter, the dual variable. The Lyapunov linearized stability problem is thus equivalent to the existence of \textit{uniform} with respect to the parameter bounds for a family of analytic semigroups. We present two specific cases, arising in the theory of reaction-diffusion systems and the bidomain equation, illustrating these ideas in Section~\ref{s4} and Section~\ref{s5}. 

It is well-known that for any strongly continuous semigroup of linear operators $\{T(t)\}_{t\geq 0}$ on a Banach space $\bX$ there exist two constants $L\geq1$ and $\omega\in\RR$ such that 
\begin{equation}\label{semigroup-exp-growth}
\|T(t)\|\leq Le^{\omega t}\;\mbox{for any}\;t\geq0.
\end{equation}
We recall that the infimum (which might not be the minimum) of all $\omega$ for which there exists a constant $L\geq 1$ such that (1.1) holds is called the semigroup {\em growth bound}, and  is denoted $\omega_0(T)$. Moreover, in the case of an analytic semigroup one has $\omega_0(T)=\rms(A):=\sup\mathrm{Re}\,\sigma(A)$, where $\rms(A)$ is the {\em spectral bound} and $\sigma(A)$ is  the spectrum of the generator $A$ of the semigroup $\{T(t)\}_{t\geq 0}$. We stress that it is much harder to find a direct formula, or even an estimate, for the constant $L$ in \eqref{semigroup-exp-growth}, unless one imposes additional conditions on the analytic semigroup $\{T(t)\}_{t\geq 0}$ or its generator. A classical example of such condition reads as follows: If $A$ is the generator of an analytic semigroup $\{T(t)\}_{t\geq 0}$ and $A-\omega I_\bX$ is dissipative, then \eqref{semigroup-exp-growth} holds for $L=1$, cf., e.g.,  \cite[Proposition 3.7.16]{ABHN}. 

In the case of {\em families} of semigroups whose generators depend on a parameter $\alpha$, real or complex, the constants $L$ and $\omega$ from \eqref{semigroup-exp-growth} might depend on $\alpha$ as well. By studying the spectrum of the generator and the properties of its resolvent, one can find a convenient growth rate. However, even if we use more advanced results, such as the 
celebrated Gearhard-Pr\" uss theorem (\cite{Ge,Pr}) or later results of Helfer-Sj\"{o}strand (\cite{HS1,HS2}), the constant $L=L(\alpha)$ is quite often such that $\sup_{\alpha} L(\alpha)=\infty$. 

Usually,  one aims at finding the best possible decay rate or the smallest growth rate, that is, the smallest $\omega$ in \eqref{semigroup-exp-growth}, may be at the expanse of making the constant $L$ large. However,  in this paper, studying exponential decay and stability of a family of semigroups depending on a parameter, we are not necessarily interested in the most optimal decay rate,  but rather just in the order of magnitude of the decay rate when the parameter changes. In this context we are willing to give up some decay to get a constant in front of the exponential term which is uniform with respect to the parameters in the system. In a sense, this philosophy is opposite to what has been used in many papers, for example, in  \cite{HS1,HS2,LY}. 

In the current paper we study uniform stability of a family of semigroups of operators, whose generators are bounded perturbations of a sectorial operator. We start with assumptions on the operator. 
 
\noindent{\bf Hypothesis (H1).}
We assume that $A:\rm{dom}(A)\subseteq\bX\to\bX$ is a \textit{sectorial} linear operator, that is, there exists $a\in\RR$, $\theta\in(\frac{\pi}{2},\pi)$ and $M_0>0$ such that
\begin{equation}\label{sectorial-B}
\Omega_{a,\theta}:=\{\lambda\in\CC:\lambda\ne a, |\mathrm{arg}(\lambda)-a|<\theta\}\subseteq\rho(A),\; \|R(\lambda,A)\|\leq \frac{M_0}{|\lambda-a|}\;\mbox{for any}\;\lambda\in\Omega_{a,\theta}.
\end{equation}
We recall that there are several concepts of sectorial operators in the literature. In the current paper we use the spectral conditions given above in \eqref{sectorial-B}. See Remark~\ref{r2.2} for a more detailed discussion.

It is well-known that if $A$ satisfies Hypothesis (H1) then it generates an analytic semigroup that can be evaluated using contour integration along a counterclockwise oriented path surrounding the spectrum of $A$. These formulas allow us to find estimates of the type \eqref{semigroup-exp-growth} and find explicit formulas not only for $\omega$, but also for $L$. 
We revisit several results known in the literature with explicit aim of controlling the constant $L$, which are very useful in the sequel, see Lemma~\ref{l2.4} and Remark~\ref{r2.5}. Next, we add to Hypothesis (H1) some more assumptions on $A$.

\noindent{\bf Hypothesis (H2).} We assume the following conditions on the spectrum of the operator $A$:
\begin{enumerate}
\item[(i)] $\sup\,\mathrm{Re}\big(\sigma(A)\setminus\{0\}\big)\leq-\nu$ for some $\nu>0$;
\item[(ii)] $0$ is a \textit{semisimple eigenvalue} of finite multiplicity of $A$.
\end{enumerate}
If the linear operator $A$ is obtained as the linearization of a PDE along a traveling wave, then Hypothesis (H2) is equivalent to what is sometimes called \textit{conditional exponential stability} of the wave. This situation is very common, see examples from Section~\ref{s4} and Section~\ref{s5}.

Next, we consider a family of operators $A_\alpha:=A+E(\alpha)$, $\alpha\geq0$, where the perturbation $E:[0,\infty)\to\mathcal{B}(\bX)$ is such that $E(0)=0$. We denote by $\{T_\alpha(t)\}_{t\geq 0}$ the semigroup of linear operators generated by $A_\alpha$, $\alpha\geq 0$. We assume the following conditions on the perturbed family of operators.

\noindent{\bf Hypothesis (H3).} The function $E:[0,\infty)\to\mathcal{B}(\bX)$ is continuous in the operator norm topology. Moreover, there exists  an increasing function $q:[0,\infty)\to[0,\infty)$ such that 
\begin{enumerate}
\item[(i)] There exists $M_1>0$ independent of $\alpha$ such that $\sigma(A_\alpha)\subset\{\lambda\in\CC:\mathrm{Re}\lambda\leq -q(\alpha)\}$ and
\begin{equation}\label{spectram-A-alpha}
\|R(\lambda,A_\alpha)\|\leq\frac{M_1}{\mathrm{Re}\lambda+q(\alpha)}\quad\mbox{whenever}\quad\mathrm{Re}\,\lambda>-q(\alpha),\;\alpha\geq 0;
\end{equation}
\item[(ii)] $q(\alpha)=q_1\alpha$ for any $\alpha\in [0,q_2]$, for some $q_1,q_2>0$;
 \item[(iii)]$\ell:=\limsup\limits_{\alpha\to\infty}\frac{\|E(\alpha)\|}{q(\alpha)}<\infty$.
\end{enumerate}
We note that the estimate \eqref{spectram-A-alpha} is necessary for the uniform in $\alpha$ exponential stability of the family of semigroups generated by $A_\alpha$, $\alpha\geq 0$. In Section~\ref{s4} and Section~\ref{s5} we present examples of families of operators that satisfy Hypothesis (H3). Moreover, a condition on the spectrum of the perturbed operator $A_\alpha$, $\alpha\geq0$, is needed to achieve uniform in $\alpha$ exponential stability of the semigroups $\{T_\alpha(t)\}_{t\geq 0}$, $\alpha\geq 0$. Indeed, in general the spectrum of the perturbed operator $A_\alpha$ might not be stable, even if the space $\bX$ is finite dimensional and $E(\alpha)=\alpha W_0$, $\alpha\geq 0$, where $W_0$ is a constant, bounded, self-adjoint, uniformly negative definite linear operator. See Appendix~\ref{s-a} for simple counterexamples.

Assuming Hypotheses (H1)-(H3) we show that the family of operators $A_\alpha$, $\alpha\geq 0$, is uniformly sectorial, that is $A_\alpha$ satisfies \eqref{sectorial-B} and all the relevant constants can be chosen independently on $\alpha$. Using this crucial result, we can prove that the family of semigroups generated by $A_\alpha$ is exponentially stable uniformly for  $\alpha\geq \delta$, for any $\delta>0$. To prove the uniform in $\alpha$ exponential estimate for the norm of the semigroup $\{T_\alpha(t)\}_{t\geq 0}$ for $\alpha$ in a neighborhood of $0$, we first find a spectral decomposition of the space $\bX$ into two subspaces invariant under $A_\alpha$. In this decomposition one spectral subspace is such that the spectrum of the restriction $A_\alpha$ is uniformly bounded away from the imaginary axis, and the other subspace is finite dimensional. To construct such a decomposition we use a transformation operator borrowed from
the classical work of Kato \cite[Chapter II, Section 4.2]{Kato} and Daletskii--Krein, \cite[Chapter 4, Section 1]{DK}. To ensure that the spectral and semigroup estimates are uniformly bounded in $\alpha$, we assume the following.

\noindent{\bf Hypothesis (H4).} The function $E_0:(0,\infty)\to\mathcal{B}(\bX)$ defined by $E_0(\alpha)=\frac{1}{\alpha} E_0(\alpha)$  has the property
\begin{equation}\label{representationE0}
\sup_{\alpha\in(0,1]}\|E_0(\alpha)\|<\infty.
\end{equation}	
This hypothesis guarantees that there exists a positive number $\eps_0>0$, \textit{independent of} $\alpha$, such that the circle of radius $\frac{\nu}{4}$ centered at the origin is contained in the resolvent set $\rho(A_\alpha)$, for any $\alpha\in [0,\eps_0]$, where $\nu>0$ was introduced in Hypothesis (H2) (see Lemma~\ref{new-l3.6} for more details). This result is essential in finding the desired spectral decomposition of the space $\bX$. Once this step is achieved, we use contour integration and the uniform sectorial property of the family of generators $A_\alpha$ to prove the uniform in $\alpha$ exponential stability of the restriction of the semigroup generated by $A_\alpha$, $\alpha\in [0,\eps_0]$, to the subspace where its spectrum is bounded away from the imaginary axis uniformly in $\alpha$.

Next, we turn our attention to the restriction of $A_\alpha$, $\alpha\in [0,\eps_0]$, to the finite dimensional subspace where all of its eigenvalues are of order $\mathcal{O}(\alpha)$. The most important step here is to find an expansion of this restriction near $\alpha=0$, which is so that all the eigenvalues of the leading order term are with negative real part. Moreover, the remainder of the leading order term is of order $o(\alpha)$ and bounded uniformly in $\alpha$ in a neighborhood of $0$. To be able to obtain such an expansion we need the following additional smoothness assumption on the perturbation.

\noindent{\bf Hypothesis (H5)}. 
There exists $r:[0,\infty)\to[0,\infty)$, a continuous, increasing function such that $r(0)=0$, $\lim_{\alpha\to\infty}r(\alpha)=\infty$, and the operator valued function $E_0$ defined in \eqref{representationE0} can be extended continuously at $0$ and satisfies the inequality
\begin{equation}\label{E0-bound}
\|E_0(\alpha)-E_0(0)\|\leq r(\alpha)\quad\mbox{for any}\quad\alpha\geq 0.
\end{equation}    
We note that Hypotheses (H4) and (H5) are automatically satisfied if the perturbation $E$ is analytic. The first main result of the paper is the following.
\begin{theorem}\label{t1.1}
Assume Hypotheses (H1)-(H5). Then, for an effectively computed in  equation \eqref{est-main} below function $M:(0,1)\to(0,\infty)$, independent of the perturbation variable $\alpha\geq0$, the following estimate holds,
\begin{equation}\label{t1.1.1}
\|T_\alpha(t)\|\leq M(\varkappa)e^{-\varkappa q(\alpha)t}\;\mbox{for any}\;t\geq 0,\alpha\geq 0, \varkappa\in(0,1).
\end{equation}
The function $M$ depends on the unperturbed operator $A$, $E_0(0)$ and the functions $\|E(\cdot)\|$, $q$, $r$ and related constants introduced in Hypotheses (H3)-(H5).
\end{theorem}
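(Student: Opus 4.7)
The plan is to split the parameter range into a ``large'' regime $\alpha \geq \eps_0$ and a ``small'' regime $\alpha \in [0,\eps_0]$, relying throughout on the uniform sectoriality of $\{A_\alpha\}_{\alpha \geq 0}$ established earlier in the paper from Hypotheses (H1)--(H3), together with the controlled-constant contour estimates of Lemma~\ref{l2.4} and Remark~\ref{r2.5}. For $\alpha \geq \eps_0$ I would represent
\[
T_\alpha(t) = \frac{1}{2\pi i}\int_{\Gamma_\alpha} e^{\lambda t}R(\lambda,A_\alpha)\,d\lambda
\]
along a sectorial Hankel-type contour $\Gamma_\alpha$ whose vertical segment lies on $\{\mathrm{Re}\,\lambda = -\varkappa q(\alpha)\}$. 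The resolvent bound (H3)(i) persists on $\Gamma_\alpha$ with a constant depending only on $\varkappa$ and the opening angle, and the standard estimate of the integral produces $\|T_\alpha(t)\| \leq M(\varkappa)e^{-\varkappa q(\alpha)t}$ with $M(\varkappa)$ independent of $\alpha$.

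For $\alpha \in [0,\eps_0]$, Hypothesis (H4) and Lemma~\ref{new-l3.6} guarantee that $\{\lambda\in\CC:|\lambda| = \nu/4\} \subset \rho(A_\alpha)$ uniformly in $\alpha$, so the Riesz projection
\[
P_\alpha := \frac{1}{2\pi i}\oint_{|\lambda|=\nu/4} R(\lambda,A_\alpha)\,d\lambda
\]
is well-defined, continuous in $\alpha$, and uniformly bounded. By (H2)(ii) one has $\dim \Range(P_\alpha) = \dim \Range(P_0) =: m < \infty$, and the $A_\alpha$-invariant splitting $\bX = \Range(P_\alpha) \oplus \ker(P_\alpha)$ reduces the task to two pieces. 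On $\ker(P_\alpha)$ the spectrum of the restriction has real part $\leq -\nu/4$, so a contour integral as above (now shifted to $-\nu/4$) yields $\alpha$-independent exponential decay, which majorizes $\varkappa q(\alpha)$ throughout $\alpha \in [0,\eps_0]$.

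The delicate step is on $\Range(P_\alpha)$. Following Kato \cite[Ch.\ II, \S4.2]{Kato} and Daletskii--Krein \cite[Ch.\ 4, \S1]{DK}, I would introduce a transformation operator $U_\alpha$ with $U_\alpha P_0 = P_\alpha U_\alpha$ and with $U_\alpha,\,U_\alpha^{-1}$ uniformly bounded, so that $A_\alpha|_{\Range(P_\alpha)}$ is similar to an operator acting on the fixed finite-dimensional space $\Range(P_0) = \ker A$. Since $A|_{\ker A} = 0$ and $A_\alpha = A + \alpha E_0(\alpha)$, Hypothesis (H5) yields an expansion
\[
U_\alpha^{-1}\bigl(A_\alpha|_{\Range(P_\alpha)}\bigr)U_\alpha = \alpha\,P_0 E_0(0) P_0\big|_{\Range(P_0)} + \alpha\,R(\alpha),
\]
with $\|R(\alpha)\| = \Oh(r(\alpha))$ as $\alpha \to 0^+$. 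The spectral condition (H3)(i) applied to the bifurcating eigenvalues of $A_\alpha$ forces $\sigma\bigl(P_0 E_0(0) P_0|_{\Range(P_0)}\bigr) \subset \{\mathrm{Re}\,\lambda \leq -q_1\}$; since this leading operator is finite-dimensional, the induced semigroup decays at rate $q_1\alpha = q(\alpha)$ with an $\alpha$-independent constant, and the $o(1)$ remainder is absorbed into the loss $(1-\varkappa)q(\alpha)$ after shrinking $\eps_0$ if needed. Assembling the two pieces via $T_\alpha(t) = T_\alpha(t)P_\alpha + T_\alpha(t)(I-P_\alpha)$ and the uniform bound on $\|P_\alpha\|$ produces \eqref{t1.1.1}.

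The main obstacle I anticipate is the small-$\alpha$ bookkeeping: verifying that $U_\alpha$ and $U_\alpha^{-1}$ remain uniformly bounded as $\alpha \to 0^+$, controlling the $o(1)$ remainder against the gap $(1-\varkappa)q(\alpha)$ without degrading the constant as $\varkappa \to 1$, and matching the various contour-integral constants on the two invariant subspaces so that the resulting $M(\varkappa)$ in \eqref{est-main} is explicit and finite on $(0,1)$.
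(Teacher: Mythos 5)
Your proposal is correct and follows essentially the same route as the paper: uniform sectoriality plus the vertical-segment contour estimate of Lemma~\ref{l2.4} for $\alpha$ bounded away from $0$, and for small $\alpha$ the Riesz projection, the Kato/Daletskii--Krein transformation operator, the key observation that (H3)(i) forces $\sigma\bigl(P_0E_0(0)|_{\mathrm{Im}P_0}\bigr)\subset\{\mathrm{Re}\,\lambda\leq-q_1\}$, and absorption of the $O(\alpha)+O(r(\alpha))$ remainder into the margin $(1-\varkappa)q(\alpha)$. The only caveat is that the leading finite-dimensional semigroup decays at rate $\varkappa' q_1$ for $\varkappa'<1$ rather than exactly $q_1$ (possible Jordan blocks on the line $\mathrm{Re}\,\lambda=-q_1$), which the paper handles by reserving half the margin, exactly as your absorption step implicitly requires.
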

In part, assumption (H3)(i) yields that $\omega_0(T_\alpha)=\rms(A_\alpha)\leq -q(\alpha)$ as the semigroup $\{T_\alpha(t)\}_{t\geq 0}$ is analytic for any $\alpha\geq0$. Of course, we cannot replace the decay rate $-\varkappa q(\alpha)$, $\varkappa\in(0,1)$, in \eqref{t1.1.1} by $\omega_0(T_\alpha)$, and thus by $-q(\alpha)$, unless additional hypotheses are imposed. In many applications, however, we are mostly interested merely in the order of the decay rate, and \eqref{t1.1.1} captures this feature. 

Next, we turn our attention to the special case when $0$ is a \textit{simple} eigenvalue of the unperturbed operator $A$. The conclusion of Theorem ~\ref{t1.1} can be obtained in this case without assuming Hypothesis (H5). Arguing in the same way as in the general case, we first establish an exponential estimate similar to \eqref{t1.1.1} for $\alpha$ away from $0$. In addition, when $\alpha$ is close to $0$ we can decompose the space $\bX$ in the sum of two invariant for $A_\alpha$ subspaces: One is such that the spectrum of the restriction of $A_\alpha$ to the subspace is away from the imaginary axis, and another is a one dimensional subspace. A modification of Hypothesis (H2) is imposed next.

\noindent{\bf Hypothesis (H2').} We assume the following conditions on the spectrum of the operator $A$:
\begin{enumerate}
	\item[(i)] $\sup\,\mathrm{Re}\big(\sigma(A)\setminus\{0\}\big)\leq-\nu$ for some $\nu>0$;
	\item[(ii)] $0$ is a \textit{simple eigenvalue} of $A$.
\end{enumerate}
\begin{theorem}\label{t1.2}
Assume Hypotheses (H1) (H2'), (H3) and (H4). Then, for an effectively computed in  equation \eqref{est-main-bis} below function $N:(0,1)\to(0,\infty)$, independent of the perturbation variable $\alpha\geq0$, the following estimate holds,
\begin{equation}\label{t1.2.1}
\|T_\alpha(t)\|\leq N(\varkappa)e^{-\varkappa q(\alpha)t}\;\mbox{for any}\;t\geq 0,\alpha\geq 0, \varkappa\in(0,1).
\end{equation}
The function $N$ depends on the unperturbed operator $A$ and the functions $\|E(\cdot)\|$, $q$ and related constants introduced in Hypothesis (H3).
\end{theorem}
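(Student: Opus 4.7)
The plan is to mimic the strategy outlined for Theorem~\ref{t1.1}, replacing the delicate expansion of the finite-dimensional restriction of $A_\alpha$ by the trivial observation that a one-dimensional restriction is scalar. First I would establish that the family $\{A_\alpha\}_{\alpha\geq 0}$ is uniformly sectorial, exactly as in the preparation for Theorem~\ref{t1.1}: the resolvent estimate \eqref{spectram-A-alpha} from (H3)(i) combined with (H1) and the bound $\|E(\alpha)\|\leq \ell\, q(\alpha)+o(q(\alpha))$ from (H3)(iii) produces a sector and resolvent constant in \eqref{sectorial-B} independent of $\alpha$. This step relies only on (H1) and (H3) and so is available without (H5).

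For $\alpha$ bounded below, say $\alpha\geq\eps_0$ with $\eps_0$ to be fixed in the next step, I would apply the uniform sectorial bounds together with Lemma~\ref{l2.4} and Remark~\ref{r2.5} to the contour representation $T_\alpha(t)=\frac{1}{2\pi i}\int_\Gamma e^{\lambda t}R(\lambda,A_\alpha)\,d\lambda$ along a vertical segment shifted to the left of $-\varkappa q(\alpha)$, obtaining $\|T_\alpha(t)\|\leq N_1(\varkappa)e^{-\varkappa q(\alpha)t}$. This mirrors the argument used in the proof of Theorem~\ref{t1.1} for $\alpha$ away from $0$, and makes no use of (H5).

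For $\alpha\in[0,\eps_0]$ I would use the Kato--Daletskii--Krein transformation construction. By (H4) and the counterpart of Lemma~\ref{new-l3.6}, the circle $|\lambda|=\nu/4$ lies in $\rho(A_\alpha)$ for all $\alpha\in[0,\eps_0]$, so the spectral projection
\[
P_\alpha=\frac{1}{2\pi i}\int_{|\lambda|=\nu/4}R(\lambda,A_\alpha)\,d\lambda
\]
is well defined, depends continuously on $\alpha$, and (since $P_0$ has one-dimensional range under (H2')) has one-dimensional range for every $\alpha\in[0,\eps_0]$, with $\sup_{\alpha}\|P_\alpha\|<\infty$. This produces an $A_\alpha$-invariant splitting $\bX=\mathrm{Ran}\,P_\alpha\oplus\mathrm{Ker}\,P_\alpha$. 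On $\mathrm{Ran}\,P_\alpha$ the operator $A_\alpha$ acts as multiplication by a single eigenvalue $\lambda(\alpha)$ with $\mathrm{Re}\,\lambda(\alpha)\leq -q(\alpha)$ by (H3)(i), giving $\|T_\alpha(t)P_\alpha\|\leq \|P_\alpha\|\,e^{-q(\alpha)t}\leq C\,e^{-\varkappa q(\alpha)t}$. On $\mathrm{Ker}\,P_\alpha$ the spectrum of the restriction is the part of $\sigma(A_\alpha)$ with $|\lambda|\geq \nu/4$; continuity of the spectrum under the bounded perturbation $E(\alpha)$ (after shrinking $\eps_0$) keeps this piece inside $\{\mathrm{Re}\,\lambda\leq -\nu/2\}$, and the uniform sectoriality transfers to the restriction, so a second contour integration yields $\|T_\alpha(t)(I-P_\alpha)\|\leq C'e^{-(\nu/3)t}$. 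Fixing $\eps_0$ small enough that $q_1\eps_0\leq \nu/3$ ensures $\varkappa q(\alpha)\leq \nu/3$, hence $e^{-(\nu/3)t}\leq e^{-\varkappa q(\alpha)t}$, and adding the two pieces gives the estimate for $\alpha\in[0,\eps_0]$. Combining with Step~2 produces the function $N(\varkappa)$ appearing in \eqref{est-main-bis}.

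The main obstacle is the uniform-in-$\alpha$ control of the projection $P_\alpha$ and of the semigroup on $\mathrm{Ker}\,P_\alpha$ as $\alpha\to 0^+$. In the general semisimple setting of Theorem~\ref{t1.1} this forced a Taylor-type expansion of the restriction of $A_\alpha$ to $\mathrm{Ran}\,P_\alpha$, whose leading term had to have spectrum in the left half-plane, thereby requiring the modulus-of-continuity control (H5) on $E_0$ at $0$. Under (H2'), however, the restriction to the one-dimensional $\mathrm{Ran}\,P_\alpha$ is automatically diagonalizable and equal to the scalar $\lambda(\alpha)$, whose real part is controlled directly by (H3)(i); no expansion, and hence no (H5), is needed.
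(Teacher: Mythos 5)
Your proposal is correct and follows essentially the same route as the paper's proof: uniform sectoriality plus the vertical-segment resolvent estimate of Lemma~\ref{l2.4} for $\alpha$ bounded away from $0$, and near $0$ a spectral splitting in which the one-dimensional piece is a scalar eigenvalue whose real part is at most $-q(\alpha)$ by (H3)(i), so that no expansion of the finite-dimensional block and hence no Hypothesis (H5) is needed. The only (cosmetic) difference is that the paper conjugates by the Kato--Daletskii--Krein transformation $U(\alpha)$ so as to work with the fixed projection $P_0$ and the fixed subspaces $\mathrm{Im}\,P_0$, $\mathrm{Ker}\,P_0$, whereas you work directly with the $\alpha$-dependent projection $P_\alpha$ and its range and kernel; since $\sup_{\alpha}\|P_\alpha\|<\infty$, the two formulations are interchangeable here.
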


Next, we present applications of Theorem~\ref{t1.1} and Theorem~\ref{t1.2} to Lyapunov linear stability of planar traveling waves in reaction-diffusion systems and the bidomain equation. The two models are known to exhibit planar traveling waves. The linearization of each of them along the wave (in the moving frame variables) is similar via the Fourier Transform to a multiplication operator by an operator valued function of a certain parameter (the dual variable). Thus, the Lyapunov linear stability of a planar traveling wave can be obtained by studying the stability of a family of analytic semigroups which is uniform with respect to the parameter. 

The reaction-diffusion system 
\begin{equation}\label{RD-Sys}
u_t=D\Delta_xu+F(u),\; t\geq 0,\;x=(x_1,\dots,x_m)^{\mathrm{T}}\in\RR^m,\;u\in\RR^k,
\end{equation}
has exponentially localized planar traveling wave solutions, that is, solutions of the form $u(x,t)=\ohh(x_1-ct)$, under appropriate conditions on the matrix $D\in\CC^{k\times k}$ and the vector valued function $F:\RR^k\to\RR^k$. Considering the equation in the moving frame variable $y=x-ct\mathrm{\mathbf{e_1}}$, with $\mathrm{\mathbf{e_1}}=(1,0,\dots,0)^{\mathrm{T}}\in\RR^m$, the linearization along the planar wave $\ohh$ is given by $\cL=D\Delta_y+cI_k\partial_{y_1}+\cM_{F'(\ohh)}$, considered as a densely defined linear operator on $L^2(\RR^m,\CC^k)$. Here $\cM_{F'(\ohh)}$ denotes the operator of multiplication on $L^2(\RR^m,\CC^k)$ by the bounded, matrix valued function $F'(\ohh(y_1))$. Taking Fourier Transform in the variables $(y_2,\dots,y_m)\in\RR^{m-1}$, the linear operator $\cL$ is unitary equivalent to $\cM_{\hatt L}$ the operator of multiplication on $L^2\big(\RR^{m-1},L^2(\RR,\CC^k)\big)$ by the operator valued function     
$\hatt L:\RR^{m-1}\to\mathcal{B}\big(H^2(\RR,\CC^k),L^2(\RR,\CC^k)\big)$, defined by $\hatt L(\xi)=D\partial_{y_1}^2+cI_k\partial_{y_1}+\cM_{V(\cdot,\xi)}$, where $V(y_1,\xi)=F'(\ohh(y_1))-|\xi|^2D$. We refer to Section~\ref{s4} for a more detailed discussion on this topic. Next, we assume the following hypothesis.

\noindent{\bf Hypothesis (RD).} The spectrum of $\hatt L(0)=D\partial_{y_1}^2+cI_k\partial_{y_1}+\cM_{F'(\ohh(\cdot))}$, the linearization along the one-dimensional problem, satisfies the following conditions:
\begin{enumerate}
	\item[(i)] $\sup\,\mathrm{Re}\big(\sigma(\hatt L(0))\setminus\{0\}\big)\leq-\nu$ for some $\nu>0$;
	\item[(ii)] $0$ is a \textit{semisimple eigenvalue} of finite multiplicity of $\hatt L(0)$.
\end{enumerate}
We are now ready to formulate our Lyapunov stability result for the case of reaction-diffusion equations where the diffusion rates of various components of $u$ in \eqref{RD-Sys} are close to each other.
\begin{proposition}\label{p1.3}
Assume Hypothesis (RD) and that the diffusion matrix $D$ is sufficiently close to $dI_k$, for some $d>0$, in the sense described in equation \eqref{D-condition} below. Then the family of semigroups generated by $\hatt L(\xi)$ is stable uniformly  with respect to $\xi\in\RR^{m-1}$. In particular, the front $\ohh$ is Lyapunov linearly stable.	
\end{proposition}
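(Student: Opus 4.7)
The plan is to verify Hypotheses (H1)--(H5) of Theorem~\ref{t1.1} with the identifications $A:=\hatt L(0)$, $\bX:=L^2(\RR,\CC^k)$, parameter $\alpha:=|\xi|^2$, and bounded perturbation $E(\alpha):=-\alpha D$, so that $A_\alpha=\hatt L(\xi)$. Writing $D=dI_k+W$ with $W:=D-dI_k$, the hypothesis that $D$ be ``sufficiently close'' to $dI_k$ will amount to an explicit smallness condition on $\|W\|$ that I identify below and that is recorded in \eqref{D-condition}. Hypothesis (H1) follows from the standard sectoriality of $D\partial_{y_1}^2$ on $L^2(\RR,\CC^k)$ with domain $H^2(\RR,\CC^k)$ (for $\|W\|$ small, the matrix $D$ is sectorial with spectrum clustered near $d>0$), together with the fact that $cI_k\partial_{y_1}$ and $\cM_{F'(\ohh)}$ are relatively bounded with arbitrarily small bound. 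Hypothesis (H2) is verbatim (RD). Hypotheses (H4)--(H5) are immediate because $E_0(\alpha)=E(\alpha)/\alpha=-D$ is \emph{constant} in $\alpha$: it extends continuously to $E_0(0)=-D$, and $\|E_0(\alpha)-E_0(0)\|\equiv 0\leq r(\alpha)$ with the admissible choice $r(\alpha)=\alpha$.

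The heart of the proof is the uniform resolvent bound (H3)(i), and the key algebraic idea is
\[
\hatt L(\xi)-\lambda I\;=\;\hatt L(0)-\mu I\;-\;|\xi|^2 W,\qquad \mu:=\lambda+d|\xi|^2,
\]
which absorbs the dominant scalar part $-d|\xi|^2 I_k$ of the perturbation into a spectral shift and leaves behind only the small residual $-|\xi|^2 W$. Fix $q_1\in(0,d)$ and set $q(\alpha):=q_1\alpha$; then $\mathrm{Re}\,\lambda>-q(|\xi|^2)$ forces $\mathrm{Re}\,\mu>(d-q_1)|\xi|^2\geq 0$. By (RD)(ii), $0$ is a semisimple eigenvalue of $\hatt L(0)$ of finite multiplicity with spectral projection $P_0$, and writing $Q_0:=I-P_0$, $A_1:=\hatt L(0)\big|_{\mathrm{Range}(Q_0)}$, one has
\[
R(\mu,\hatt L(0))\;=\;\frac{P_0}{\mu}+R(\mu,A_1)Q_0,
\]
where $A_1$ is sectorial with $\sigma(A_1)\subset\{\mathrm{Re}\,z\leq-\nu\}$. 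The sectorial estimate gives $\|R(\mu,A_1)\|\leq M_0/|\mu+\nu|$, and $|\mu+\nu|\geq|\mu|$ whenever $\mathrm{Re}\,\mu\geq 0$, so $\|R(\mu,\hatt L(0))\|\leq C/|\mu|$ with $C:=\|P_0\|+M_0\|Q_0\|$. Since $|\mu|\geq(d-q_1)|\xi|^2$, the Neumann expansion
\[
R(\lambda,\hatt L(\xi))\;=\;\bigl(I+|\xi|^2\,R(\mu,\hatt L(0))\,W\bigr)^{-1}R(\mu,\hatt L(0))
\]
converges whenever $\|W\|<(d-q_1)/(2C)$; combined with $|\mu|\geq\mathrm{Re}\,\lambda+q_1|\xi|^2$, this yields the uniform bound $\|R(\lambda,\hatt L(\xi))\|\leq 2C/(\mathrm{Re}\,\lambda+q(|\xi|^2))$, which is (H3)(i) with $M_1=2C$. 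Conditions (H3)(ii)--(iii) are then immediate, since $q$ is linear on all of $[0,\infty)$ and $\|E(\alpha)\|/q(\alpha)=\|D\|/q_1$.

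With (H1)--(H5) verified, Theorem~\ref{t1.1} provides, for every $\varkappa\in(0,1)$, a constant $M(\varkappa)$ such that
\[
\|e^{\hatt L(\xi)t}\|\;\leq\;M(\varkappa)\,e^{-\varkappa q_1|\xi|^2 t}\;\leq\;M(\varkappa)\qquad\text{for all }t\geq 0,\ \xi\in\RR^{m-1}.
\]
Because $\cL$ is unitarily equivalent via the partial Fourier transform in $(y_2,\dots,y_m)$ to the multiplication operator $\cM_{\hatt L}$, and because the norm of a multiplication semigroup equals the essential supremum of its fiber norms, we conclude $\sup_{t\geq 0}\|e^{\cL t}\|\leq M(\varkappa)<\infty$, which is the asserted Lyapunov linear stability of the planar front $\ohh$. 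The sole genuine obstacle in the plan is verifying (H3)(i); the trick is the shift $\mu=\lambda+d|\xi|^2$, which converts a globally large perturbation ($|\xi|^2 D$) into a globally small one ($|\xi|^2 W$) relative to the resolvent of the unperturbed operator at the shifted spectral parameter, where the Laurent expansion at the semisimple eigenvalue $0$ supplies the quantitative control needed to close the Neumann series.
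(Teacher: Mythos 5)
Your proposal is correct and follows essentially the same route as the paper's proof (Lemma~\ref{l4.3}): the same identification $A=\hatt L(0)$, $E(\alpha)=-\alpha\cM_D$, the same decomposition $D=dI_k+W$ with the shift $\mu=\lambda+d|\xi|^2$ absorbing the scalar part, the same Neumann-series absorption of the residual $|\xi|^2 W$ to get (H3)(i), and the same appeal to Theorem~\ref{t1.1} plus the unitary equivalence of $\cL$ with $\cM_{\hatt L}$. The only (cosmetic) difference is that you bound $R(\mu,\hatt L(0))$ by $C/|\mu|$ via the Laurent expansion at the semisimple eigenvalue $0$, whereas the paper uses $\tM_0/\mathrm{Re}\,\mu$ coming from boundedness of the semigroup $e^{t\hatt L(0)}$ (Remark~\ref{r4.2}), which merely changes the explicit constant in the smallness condition \eqref{D-condition}.
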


Next, we briefly recall the bidomain model in electrophysiology studied by Matano and Mori, see, e.g., \cite{MaMo},
\begin{equation}\label{bidomain}
\left\{\begin{array}{ll}
u_t=\nabla_x\cdot(A_i\nabla_x u_i)+f(u),\\
\nabla_x\cdot(A_i\nabla_x u_i+A_e\nabla_x u_e)=0,\\
u=u_i-u_e,\end{array}\right.t\geq0,\; x\in\RR^2.
\end{equation} 
Here, the scalar functions $u_i$ and $u_e$ represent the intracellular and extracellular voltages, $A_i,A_e\in\RR^{2\times2}$ are symmetric, positive definite matrices. Typically one has 
\begin{equation}\label{def-A-i-e} 
A_i=\begin{bmatrix}
1+\nu_1+\nu_2&0\\
0&1+\nu_2-\nu_1\end{bmatrix},\quad A_e=\begin{bmatrix}
1-\nu_1-\nu_2&0\\
0&1-\nu_2+\nu_1\end{bmatrix},\;\mbox{with}\;|\nu_1\pm\nu_2|<1.
\end{equation}
The function $f$ is of class  $\mathcal{C}^3$ and
of bistable type, for example $0$ and $1$ are two stable zeros of $f$, and there exists a unique unstable zero of $f$ in the interval $(0,1)$. For a very detailed discussion regarding the applications and importance of the bidomain Allen-Cahn model to cardiac electrophysiology we refer to \cite{MaMo} and the references therein. 

The bidomain model \eqref{bidomain} has planar wave solutions of the form
$$(u,u_i,u_e)(x,t)=(\ow,\ow_i,\ow_e)(x_1\cos\gamma+x_2\sin\gamma-ct),\; x=(x_1,x_2)\in\RR^2, t\geq 0, c,\gamma\in\RR.$$ 
Next, we pass to the moving frame coordinate system $(y_1,y_2)\in\RR^2$, where the new axes are chosen such that the wave travels in the direction of $y_1$. Moreover, we note that last two equations of the system \eqref{bidomain} are linear. Linearizing the system along the traveling wave solution and eliminating the variables $u_i$ and $u_e$, we obtain that the linearization is given by     
$\cA=-\cL_\gamma +c\partial_{y_1}+\cM_{f'(\ow)}$. The linear operator $\cL_\gamma:H^2(\RR^2)\to L^2(\RR^2)$ is given as the Fourier multiplier $\cL_\gamma=\cF^{-1}\cM_{Q_\gamma}\cF$. The function $Q_\gamma$ is a rational function whose coefficients depend on $\nu_1$, $\nu_2$ and $\gamma$ only. Moreover, it can be represented as follows:
\begin{equation}\label{rep-Q-gamma}
Q_\gamma(\xi_1,\xi_2)=\left\{\begin{array}{ll} \xi_2^2\Big(p\Big(\frac{\xi_1}{\xi_2}\Big)+g\Big(\frac{\xi_1}{\xi_2}\Big)\Big),& \xi_1\in\RR,\,\xi_2\in\RR\setminus\{0\},\\
N_0^2\xi_1^2,& \xi_1\in\RR,\,\xi_2=0, \end{array}\right.\;\mbox{where},
\end{equation}
\begin{equation}\label{def-p-g}
p(s)=N_0^2(s-\eta_1)^2+\eta_0,\quad g(s)=\frac{\beta_1s+\beta_0}{s^2+1}.
\end{equation}
The constants $N_0$, $\beta_0$, $\beta_1$, $\eta_0$ and $\eta_1$ depend on $\nu_1$ and $\nu_2$ and $\gamma$ only. Taking Fourier Transform $\cF_2$ with respect to $y_2\in\RR$, we note that the linear operator $\cA$ is unitary equivalent to $\cM_{\hatt A}$ the operator of multiplication on $L^2\big(\RR,L^2(\RR)\big)$ by the operator valued function     
$\hatt A:\RR\to\mathcal{B}\big(H^2(\RR),L^2(\RR)\big)$, $\hatt A(\xi_2)=-\cF_1^{-1}\cM_{Q_\gamma(\cdot,\xi_2)}\cF_1+c\partial_{y_1}+\cM_{f'(\ow)}$. 
Here $\cF_1$ denotes the Fourier transform with respect to the variable $y_1\in\RR$.
For more details we refer to Section~\ref{s5} and \cite{MaMo}.

From \eqref{rep-Q-gamma} and \eqref{def-p-g} it follows that $\hatt A(0)$ is a second order differential operator and there exists $M_\rmb>0$ such that
\begin{equation}\label{hat-A-0}
\big\|R\big(\lambda,\hatt A(0)\big)\big\|\leq\frac{M_\rmb}{|\lambda|}\;\mbox{whenever}\;\mathrm{Re}\,\lambda\geq 0, \lambda\ne 0.
\end{equation}	
We recall the following result from \cite[Corollary 3.3]{MaMo}:
\begin{equation}\label{spectral-stable}
\ow \;\,\mbox{is spectrally stable provided that}\;\, \eta_0>M_\rmb g_\Delta-\og.
\end{equation}
Here $g_{\mathrm{inf}}=\inf_{s\in\RR}g(s)$, $g_{\mathrm{sup}}=\sup_{s\in\RR}g(s)$, $\og=\frac{g_{\mathrm{inf}}+g_{\mathrm{sup}}}{2}$, $g_\Delta=\frac{g_{\mathrm{sup}}-g_{\mathrm{inf}}}{2}$, while $\eta_0$ was introduced in \eqref{def-p-g} and $M_\rmb$ was introduced in \eqref{hat-A-0}. In \cite{MaMo} the authors show that \eqref{spectral-stable} is met for certain values of the parameters. In this paper we prove that the sufficient condition of \eqref{spectral-stable} guarantees that the planar front $\ow$ is Lyapunov linearly stable. 
\begin{proposition}\label{p1.4}
Assume that $\eta_0>M_\rmb g_\Delta-\og$. Then,  the family of semigroups generated by $\hatt A(\xi_2)$ is stable uniformly with respect to $\xi_2\in\RR$. In particular, the front $\ow$ is Lyapunov linearly stable.	
\end{proposition}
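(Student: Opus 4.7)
The plan is to apply Theorem~\ref{t1.2} to a suitable reparametrization of the family $\{\hatt A(\xi_2)\}_{\xi_2\in\RR}$ and then transfer the uniform pointwise bound to $\cA$ via Plancherel. Since the spectral gap of $\hatt A(\xi_2)$ is expected to scale like $\xi_2^2$ (the symbol $Q_\gamma$ is second order in $\xi_2$), I set $\alpha:=\xi_2^2$ and handle the two branches $\xi_2=\pm\sqrt{\alpha}$ separately; all constants will come out independent of the sign. The base operator is $A:=\hatt A(0)=N_0^2\partial_{y_1}^2+c\partial_{y_1}+\cM_{f'(\ow)}$, the classical linearization along the one-dimensional bistable wave $\ow$. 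Hypothesis (H1) follows from standard sectoriality of a second-order constant-coefficient elliptic operator with a bounded lower-order perturbation, and Hypothesis (H2') follows from the scalar bistable spectral theory: the translation eigenfunction $\ow'$ spans a simple kernel of $A$ and the remainder of the spectrum sits in $\{\mathrm{Re}\,\lambda\leq-\nu\}$ for some $\nu>0$.

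The first hurdle is that the naive perturbation $\hatt A(\xi_2)-\hatt A(0)$ is \emph{not} a bounded operator on $L^2(\RR)$: expanding $\xi_2^2 p(\xi_1/\xi_2)=N_0^2(\xi_1-\eta_1\xi_2)^2+\eta_0\xi_2^2$, the cross term $-2N_0^2\eta_1\xi_1\xi_2$ grows linearly in $\xi_1$. I would remedy this by conjugating with the unitary multiplication operator $U_{\xi_2}:=e^{-i\eta_1\xi_2 y_1}$, which satisfies $U_{\xi_2}\partial_{y_1}U_{\xi_2}^{-1}=\partial_{y_1}+i\eta_1\xi_2$ and therefore collapses $(\partial_{y_1}-i\eta_1\xi_2)^2$ to $\partial_{y_1}^2$, while commuting with $\cM_{f'(\ow)}$. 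A direct computation then yields
\begin{equation*}
U_{\xi_2}\,\hatt A(\xi_2)\,U_{\xi_2}^{-1}\;=\;\hatt A(0)\;+\;ic\eta_1\xi_2\,I\;+\;\tilde E(\xi_2),
\end{equation*}
where $\tilde E(\xi_2)=-\eta_0\xi_2^2\,I-\xi_2^2\,U_{\xi_2}\cF_1^{-1}\cM_{g(\cdot/\xi_2)}\cF_1 U_{\xi_2}^{-1}$ is genuinely bounded on $L^2(\RR)$ with $\|\tilde E(\xi_2)\|=\Oh(\xi_2^2)$ since $g$ is bounded. Because $U_{\xi_2}$ is unitary and the scalar shift $ic\eta_1\xi_2\,I$ merely multiplies the semigroup by a unimodular factor, $\|e^{t\hatt A(\xi_2)}\|=\|e^{t(\hatt A(0)+\tilde E(\xi_2))}\|$, and it suffices to prove a uniform exponential estimate for the bounded-perturbation family $\hatt A(0)+\tilde E(\xi_2)$.

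Hypothesis (H4) for $\alpha=\xi_2^2$ is then immediate since $\tilde E(\alpha)/\alpha$ has norm uniformly bounded on $(0,1]$ by $\eta_0+g_{\mathrm{sup}}$. The main obstacle, as expected, is Hypothesis (H3)(i): a uniform resolvent estimate with spectral abscissa $q(\alpha)=q_1\alpha$. For this I would combine the baseline resolvent bound \eqref{hat-A-0} on $R(\lambda,\hatt A(0))$ with the elementary inequality $\mathrm{Re}\bigl(-Q_\gamma(\xi_1,\xi_2)\bigr)\leq-\xi_2^2(\eta_0+g_{\mathrm{inf}})$ and invoke the argument of \cite[Cor.~3.3]{MaMo} to convert the spectral-stability condition $\eta_0>M_\rmb g_\Delta-\og$ (which via $g_{\mathrm{inf}}=\og-g_\Delta$ is equivalent to $\eta_0+g_{\mathrm{inf}}>(M_\rmb-1)g_\Delta\geq 0$) into a quantitative rate $q_1>0$ at which the simple eigenvalue near the origin leaves the imaginary axis. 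For $|\xi_2|$ large, ellipticity of the principal symbol together with $\|\tilde E(\xi_2)\|=\Oh(\xi_2^2)$ yields both the required resolvent estimate and Hypothesis (H3)(iii). Once all hypotheses are verified, Theorem~\ref{t1.2} produces $N(\varkappa)$ independent of $\xi_2$ with $\|e^{t\hatt A(\xi_2)}\|\leq N(\varkappa)\,e^{-\varkappa q_1\xi_2^2 t}$ for every $\xi_2\in\RR$ and $t\geq 0$; taking the supremum over $\xi_2$ (the exponential factor is at most $1$) and using the Plancherel isometry identifying $\cA$ with $\cM_{\hatt A}$ on $L^2\bigl(\RR,L^2(\RR)\bigr)\simeq L^2(\RR^2)$ upgrades this to $\|e^{t\cA}\|_{L^2(\RR^2)}\leq N(\varkappa)$ uniformly in $t\geq 0$, which is precisely Lyapunov linear stability of the front $\ow$.
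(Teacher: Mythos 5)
Your proposal is correct and follows essentially the same route as the paper: conjugation by the unitary multiplication operator $e^{\rmi\eta_1\xi_2 y_1}$ to reduce $\hatt A(\xi_2)$ to a bounded perturbation of $\hatt A(0)$ (the paper's identity \eqref{hat-A-0-xi}), removal of the purely imaginary scalar drift $c\rmi\eta_1\xi_2 I$, the parametrization $\alpha=\xi_2^2$ with separate $\pm$ branches, verification of (H1), (H2'), (H3), (H4) with $q(\alpha)=(\eta_0-M_\rmb g_\Delta+\og)\alpha$ via the Neumann-series resolvent argument of \cite{MaMo}, and application of Theorem~\ref{t1.2} followed by the transfer to $\cA$ through the multiplication-operator structure. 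The only step you leave as a sketch — the uniform resolvent bound of Hypothesis (H3)(i) — is exactly what the paper carries out in Remark~\ref{r5.3} using the baseline bound \eqref{hat-A-0} and $\|\tH(\xi_2)\|\leq g_\Delta\xi_2^2$, so there is no gap in substance.
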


\noindent{\bf Plan of the paper.} The paper is organized as follows. In Section ~\ref{s2} we discuss the two most common concepts of sectorial operators, and analyze several exponential bounds for analytic semigroups. In Section~\ref{s3} we prove Theorem~\ref{t1.1} and Theorem~\ref{t1.2}. Proposition~\ref{p1.3} and Proposition~\ref{p1.4} are proved in Section ~\ref{s4} and Section ~\ref{s5}, respectively.

\noindent{\bf A glossary of notation.} 
$L^p(\RR^m,\bX)$, $p\geq 1$, denotes the usual Lebesgue space on $\RR^m$ with values in a Banach space $\bX$, associated with the Lebesgue measure $\rmd x$ on $\RR^m$.
$H^s(\RR^m,\bX)$, $s> 0$, is the usual Sobolev space of $\bX$ valued functions. The open disc in $\CC$ centered at $a$ of radius $\eps>0$ is denoted by $D(a,\eps)$.
The identity operator on a Banach space $\bX$ is denoted by $I_\bX$. The set of bounded linear operators from a Banach space $\bX$ to itself is denoted by $\cB(\bX)$.
For an operator $B$ on a Banach space $\bX$, we use  $\dom(B)$, $\ker B$, $\im B$, $\sigma(B)$, and $B_{|\bW}$ to denote the domain, kernel, range, spectrum, adjoint and the restriction of $B$ to a subspace $\bW$ of $\bX$. In the case when the space is a Hilbert space $B^*$ denotes the adjoint operator. We denote by $\sigma_{\mathrm{disc}}(B)$ the set of isolated eigenvalues of finite algebraic multiplicity of the linear operator $B$, and by $\sigma_{\mathrm{ess}}(B)$ its complement in the spectrum of $B$. The direct sum of two subspaces $\bW_1$ and $\bW_2$ is denoted by $\bW_1\oplus \bW_2$. The operator of multiplication by a function $g$ is denoted by $\cM_g$. We use $\omega_0(T)$ or $\omega_0(A)$ to denote the growth bound of a semigroup $\{T(t)\}_{t\geq 0}$ with generator $A$. The spectral bound of the generator $A$ is defined by $\rms(A)=\sup\mathrm{Re}\,\sigma(A)$.

\section{Norm estimates of semigroups generated by sectorial operators}\label{s2}
In this section we assume that $\bX$ is a Banach space and $A:\rm{dom}(A)\subseteq\bX\to\bX$ is a sectorial operator generating an analytic semigroup of linear operators denoted $\{T(t)\}_{t\geq 0}$. There are various concepts of sectorial operators relevant to our setup that we are going to briefly discuss below.

First, we recall that for any $a\in\RR$ and $\theta\in (\frac{\pi}{2},\pi)$ we defined the sector with vertex at $a$ of angle $\theta$ the set
\begin{equation}\label{def-sector}
\Omega_{a,\theta}=\{\lambda\in\CC:\lambda\ne a, |\mathrm{arg}(\lambda)-a|<\theta\}.
\end{equation}
One can readily check that
\begin{equation}\label{def-sector2}
\CC\setminus\Omega_{a,\theta}=\{\lambda\in\CC:\mathrm{Re}\lambda\leq a,|\mathrm{Im}\lambda|\leq(\mathrm{Re}\lambda-a)\tan\theta\}.
\end{equation}
In the literature on semigroup of linear  operators (see, e.g., \cite{EN,lunardi,P}) the definition of a sectorial operator is given using its spectral properties, see \eqref{sectorial-B}. In the case of linear operators on a Hilbert space, in particular differential operators, several classical texts (\cite{S}) define sectorial operators using the numerical range. We recall that if $\bH$ is a Hilbert space and $B:\dom(B)\subseteq\bH\to\bH$ is a closed, densely defined linear operator, the numerical range of $B$ is defined by
\begin{equation}\label{numerical-range}
W(B)=\{\langle Bh,h\rangle: h\in\dom(B),\|h\|=1\}.
\end{equation}
\begin{definition}\label{d2.1}The operator $B:\dom(B)\subseteq\bH\to\bH$ is said to be \textit{numerical range sectorial} if there exists $a\in\RR$, $\theta\in(\frac{\pi}{2},\pi)$ such that $W(B)\subseteq\CC\setminus\Omega_{a,\theta}$.
\end{definition}	
\begin{remark}\label{r2.2} From \cite[Theorem 1.4]{A} one can readily check that any \textit{numerical range sectorial} operator is \textit{sectorial}, but not vice versa. Indeed, if $W(B)\subseteq\CC\setminus\Omega_{a,\theta}$ for some $a\in\RR$ and $\theta\in(\frac{\pi}{2},\pi)$, since $\Omega_{a,\theta}$ is open we have $\sigma(B)\subseteq\overline{W(B)}\subseteq \CC\setminus\Omega_{a,\theta}$. In addition,
\begin{equation}\label{r2.2.1}
\|R(\lambda,B)\|\leq \frac{1}{\mathrm{dist}(\lambda,S_{a,\theta})}=\frac{1}{\mathrm{dist}(\lambda-a,S_{0,\theta})}.
\end{equation}
Another simple computation shows that
\begin{equation}\label{r2.2.2}
{\rm dist}(z,S_{0,\theta})=\left\{\begin{array}{ll} |z|,& |{\rm arg}z|<\theta-\frac{\pi}{2},\\
|z|\sin{(\theta-|{\rm arg}z|)},& \theta-\frac{\pi}{2}\leq|{\rm arg}z|<\theta, \end{array}\right.
\end{equation}
for any $z\in\Omega_{0,\theta}$. From \eqref{r2.2.1} and \eqref{r2.2.2} we infer that
\begin{equation}\label{r2.2.3}
\|R(\lambda,B)\|\leq\frac{\csc{(\theta-\varphi)}}{|\lambda-a|}\;\mbox{for any}\;\lambda\in\Omega_{a,\varphi}\;\mbox{and any}\;\varphi\in (\frac{\pi}{2},\theta),
\end{equation}
and so $B$ is sectorial. 
\end{remark}
It is well-known that if the linear operator $A:\dom(A)\subseteq\bX\to\bX$ satisfies Hypothesis (H1) then $\|T(t)\|\leq Le^{at}$ for any $t\geq 0$, for some $L>0$. Our first task is to revisit the proof of this result and for each $\varphi\in(\frac{\pi}{2},\theta)$ find a constant $L=L(\varphi)$, not necessarily optimal, satisfying the estimate that depends only on $\varphi$ and $M_0$.
\begin{lemma}\label{l2.3}
Assume Hypothesis (H1). Then,
\begin{equation}\label{vertex-estimate}
\|T(t)\|\leq \frac{M_0(e\varphi-\sec\varphi)}{\pi}e^{at}\;\mbox{for any}\;t\geq 0\;\mbox{and any}\;\varphi\in(\frac{\pi}{2},\theta).
\end{equation}	
\end{lemma}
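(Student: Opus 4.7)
The plan is to represent $T(t)$ by a Dunford--Cauchy contour integral over a suitable keyhole contour in the sector, estimate the contributions from the straight rays and from a circular arc whose radius depends on $t$, and then optimize the choice of that radius.

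First I would shift, replacing $A$ by $A - aI_\bX$, which produces the factor $e^{at}$ and reduces the problem to the case $a=0$ with the same $M_0$ and $\theta$. Fix $\varphi \in (\pi/2,\theta)$ and $t>0$, and set $\rho = 1/t$. Since $\Omega_{0,\theta}\subseteq\rho(A)$ with $\|R(\lambda,A)\|\leq M_0/|\lambda|$, I may use the standard analytic-semigroup representation
\[
T(t) = \frac{1}{2\pi i}\int_{\Gamma} e^{\lambda t}\,R(\lambda,A)\,d\lambda,
\]
where $\Gamma = \Gamma_-\cup\Gamma_0\cup\Gamma_+$ lies in $\Omega_{0,\theta}$: the ray $\Gamma_-=\{re^{-i\varphi}: r\geq \rho\}$ traversed inward, the arc $\Gamma_0=\{\rho e^{i\psi}:\psi\in[-\varphi,\varphi]\}$ traversed counterclockwise, and the ray $\Gamma_+=\{re^{i\varphi}:r\geq \rho\}$ traversed outward.

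Next I would bound the three pieces separately. On $\Gamma_\pm$ I parametrize $\lambda = re^{\pm i\varphi}$, so that $|e^{\lambda t}| = e^{rt\cos\varphi}$ and $\|R(\lambda,A)\|\leq M_0/r$. After the substitution $s=rt$, the contribution of the two rays is dominated by
\[
\frac{M_0}{\pi}\int_{1}^{\infty}\frac{e^{s\cos\varphi}}{s}\,ds \;\leq\; \frac{M_0}{\pi}\int_{1}^{\infty} e^{s\cos\varphi}\,ds \;=\; \frac{M_0\, e^{\cos\varphi}}{-\pi\cos\varphi} \;\leq\; \frac{-M_0\sec\varphi}{\pi},
\]
where I used $\cos\varphi<0$ together with $e^{\cos\varphi}<1$. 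On the arc I parametrize $\lambda = \rho e^{i\psi} = t^{-1}e^{i\psi}$; then $|e^{\lambda t}| = e^{\cos\psi}\leq e$, $\|R(\lambda,A)\|\leq M_0 t$, and $|d\lambda|=t^{-1}|d\psi|$, so the arc contribution is at most
\[
\frac{1}{2\pi}\int_{-\varphi}^{\varphi} e\cdot M_0\,d\psi \;=\; \frac{M_0\,e\,\varphi}{\pi}.
\]

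Summing the two estimates yields $\|T(t)\|\leq \frac{M_0(e\varphi-\sec\varphi)}{\pi}$ in the $a=0$ case for $t>0$; reinstating the exponential gives the desired bound for $t>0$, and the case $t=0$ follows because $\|T(0)\|=1$ is dominated by that constant (which exceeds $M_0\geq 1$). The only real choice is $\rho=1/t$: it is precisely what makes the arc contribution $t$-independent while keeping the exponential integral on the rays finite and controllable, so the proof is essentially a bookkeeping exercise once that choice is made. The main point to be careful about is the sign conventions ($\cos\varphi<0$, $\sec\varphi<0$) when combining the two pieces.
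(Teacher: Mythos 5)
Your proof is correct and follows essentially the same route as the paper: the same keyhole contour with rays at angles $\pm\varphi$ and a circular arc of radius $1/t$ (the paper writes the radius as $\eps/t$ and then takes $\eps=1$), the same ray and arc estimates, and the same final bookkeeping $e\varphi-\sec\varphi$. The only cosmetic differences are that you normalize to $a=0$ at the outset and handle $t=0$ explicitly, neither of which changes the substance.
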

\begin{proof}
We fix $\varphi\in(\frac{\pi}{2},\theta)$, $t>0$ and $\varepsilon>0$. We introduce the curves in the complex plane given by
\begin{align}\label{l2.3.1}
\Lambda^\pm_{a,\varphi,\varepsilon}&=\{\lambda\in\CC:\mathrm{arg}(\lambda-a)=\pm\varphi,|\lambda-a|\geq\eps\}=\{a+se^{\pm\rmi\varphi}:s\geq\eps\}\nonumber\\
\Lambda^\rmc_{a,\varphi,\varepsilon}&=\{\lambda\in\CC:|\mathrm{arg}(\lambda-a)|\leq\varphi,|\lambda-a|=\eps\}=\{a+\eps e^{\rmi\zeta}:-\varphi\leq\zeta\leq\varphi\}.
\end{align}	
The path $\Lambda_{a,\varphi,\eps}$ is defined as the union $\Lambda^-_{a,\varphi,\eps}\cup\Lambda^\rmc_{a,\varphi,\eps}\cup\Lambda^+_{a,\varphi,\eps}$ oriented counterclockwise. Since $\Lambda_{a,\varphi,\frac{\eps}{t}}\subseteq\Omega_{a,\theta}$ for any $t>0$ and $\eps>0$ we have  (see \cite[Chapter 2]{lunardi})
\begin{equation}\label{l2.3.2}
T(t)=\frac{1}{2\pi\rmi}\int_{\Lambda_{a,\varphi,\frac{\eps}{t}}} e^{\lambda t}R(\lambda,A)\rmd\lambda\;\mbox{for any}\; t>0, \eps>0.
\end{equation}
Next, we estimate the contour integrals above using \eqref{sectorial-B}. Changing variables we have 
\begin{align}\label{l2.3.3}
\int_{\Lambda^\pm_{a,\varphi,\frac{\eps}{t}}} e^{\lambda t}R(\lambda,A)\rmd\lambda&=\int_{\frac{\eps}{t}}^{\infty}e^{(a+se^{\pm\rmi\varphi})t}R(a+se^{\pm\rmi\varphi},A)\,e^{\pm\rmi\varphi}\rmd s\nonumber\\&=\frac{e^{at\pm\rmi\varphi}}{t}\int_{\eps}^{\infty} e^{\xi e^{\pm\rmi\varphi}}R\big(a+\frac{\xi}{t}e^{\pm\rmi\varphi},A\big)\,\rmd \xi.
\end{align}
Since $\Lambda^\pm_{a,\varphi,\frac{\eps}{t}}\subseteq\Omega_{a,\theta}$ for any $t>0$ and $\eps>0$, from \eqref{sectorial-B} and \eqref{l2.3.3} we obtain that
\begin{equation}\label{l2.3.4}
\Big\|\int_{\Lambda^\pm_{a,\varphi,\frac{\eps}{t}}} e^{\lambda t}R(\lambda,A)\rmd\lambda\Big\|\leq\frac{e^{at}}{t}\int_{\eps}^{\infty}e^{\xi\cos\varphi}\frac{M_0}{|\frac{\xi}{t}e^{\pm\rmi\varphi}|}\rmd\xi=M_0e^{at}\int_{\eps}^{\infty}\frac{e^{\xi\cos\varphi}}{\xi}\rmd\xi.
\end{equation}	
Similarly, since $\Lambda^\rmc_{a,\varphi,\frac{\eps}{t}}\subseteq\Omega_{a,\theta}$ for any $t>0$ and $\eps>0$, one can readily check that
\begin{equation}\label{l2.3.5}
\Big\|\int_{\Lambda^\rmc_{a,\varphi,\frac{\eps}{t}}} e^{\lambda t}R(\lambda,A)\rmd\lambda\Big\|=\Big\|\frac{\eps\rmi e^{at}}{t}\int_{-\varphi}^{\varphi}e^{\eps e^{\rmi\zeta}}R(a+\frac{\eps}{t}e^{\rmi\zeta},A)\rmd\zeta\Big\|\leq M_0e^{at}\int_{-\varphi}^{\varphi}e^{\eps \cos\zeta}\rmd\zeta.
\end{equation}	
From \eqref{l2.3.2}, \eqref{l2.3.4} and \eqref{l2.3.5} we conclude that
\begin{equation}\label{l2.3.6}
\|T(t)\|\leq M_0C_0(\varphi)e^{at}\;\mbox{for any}\; t\geq 0,\;\mbox{where}\; C_0(\varphi):=\frac{1}{\pi}\inf_{\eps>0}\Big(\int_{\eps}^{\infty}\frac{e^{\xi\cos\varphi}}{\xi}\rmd\xi+\frac{1}{2}\int_{-\varphi}^{\varphi}e^{\eps \cos\zeta}\rmd\zeta\Big).
\end{equation}
Finally, we note that
\begin{equation}\label{l2.3.7}
C_0(\varphi)\leq \frac{1}{\pi}\Big(\int_{1}^{\infty}\frac{e^{\xi\cos\varphi}}{\xi}\rmd\xi+\frac{1}{2}\int_{-\varphi}^{\varphi}e^{\cos\zeta}\rmd\zeta\Big)\leq\frac{1}{\pi}\Big(\int_{1}^{\infty}e^{\xi\cos\varphi}\rmd\xi+e\varphi\Big)=\frac{e\varphi-\sec\varphi}{\pi},
\end{equation}
proving the lemma.
\end{proof}	
Next, we discuss how to improve estimate \eqref{vertex-estimate}, provided that the sectorial semigroup generator $A$ is such that $\sup\mathrm{Re}\,\sigma(A)<a$, where $a$ is the vertex of the sector. Such an estimate is important since for many second order elliptic differential operators one can immediately prove they are sectorial using G\aa rding inequality, but typically the vertex is positive. To formulate our result we introduce the operator-valued function
\begin{equation}\label{def-V}
\cV_A:(0,\infty)\times(\mathrm{s}(A),\infty)\times(\frac{\pi}{2},\theta)\to\mathcal{B}(\bX),\;\cV_A(t,\mu,\varphi)=\int_{-(a-\mu)|\tan\varphi|}^{(a-\mu)|\tan\varphi|}e^{\rmi st}R(\mu+\rmi s,A)\rmd s.
\end{equation}
\begin{lemma}\label{l2.4}
Assume Hypothesis (H1) and that the vertex $a$ from \eqref{sectorial-B} is such that $a>\rms(A)=\sup\mathrm{Re}\,\sigma(A)$. Then,
\begin{equation}\label{V-estimate}
\|T(t)\|\leq \frac{M_0e^{\mu t}}{\pi(a-\mu)t}+\frac{e^{\mu t}}{2\pi}\sup_{t>0}\|\cV_A(t,\mu,\varphi)\|\quad\mbox{for any}\quad t>0, \mu\in\big(\mathrm{s}(A),a\big), \varphi\in(\frac{\pi}{2},\theta).
\end{equation}
\end{lemma}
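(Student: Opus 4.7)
The strategy is a contour deformation: replace the ``keyhole'' contour $\Lambda_{a,\varphi,\eps/t}$ used in Lemma~\ref{l2.3} by one that runs along the vertical line $\mathrm{Re}\,\lambda=\mu$ in the middle and along the two rays $\{a+se^{\pm\rmi\varphi}:s\geq(a-\mu)/|\cos\varphi|\}$ outside. Because $\varphi\in(\frac\pi2,\theta)$ and $\mu<a$, the two rays meet the vertical line precisely at the endpoints $\mu\pm\rmi(a-\mu)|\tan\varphi|$ appearing in the definition \eqref{def-V} of $\cV_A$; one checks by a short computation that $\mu\pm\rmi(a-\mu)|\tan\varphi|-a = \frac{a-\mu}{|\cos\varphi|}e^{\pm\rmi\varphi}$, so the new contour closes up correctly.

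First I would justify the deformation. The original and new contours agree for $|\lambda-a|\geq(a-\mu)/|\cos\varphi|$; they differ in a bounded region lying in the strip $\mu\leq\mathrm{Re}\,\lambda\leq a$. Since $\mu>\rms(A)$, this strip is contained in $\rho(A)$, so Cauchy's theorem applies and
\begin{equation*}
T(t)=\frac{1}{2\pi\rmi}\int_{\text{new contour}}e^{\lambda t}R(\lambda,A)\,\rmd\lambda.
\end{equation*}

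Second, I would estimate the two ray pieces exactly as in the proof of Lemma~\ref{l2.3}, but integrating from $(a-\mu)/|\cos\varphi|$ instead of from a shrinking $\eps/t$. Parametrizing $\lambda=a+se^{\pm\rmi\varphi}$ and applying \eqref{sectorial-B}, the substitution $u=st|\cos\varphi|$ converts the estimate into an exponential integral:
\begin{equation*}
\Big\|\int_{\text{ray}_\pm}e^{\lambda t}R(\lambda,A)\,\rmd\lambda\Big\|\;\leq\; M_0 e^{at}\int_{(a-\mu)t}^{\infty}\frac{e^{-u}}{u}\,\rmd u\;\leq\;\frac{M_0 e^{at}e^{-(a-\mu)t}}{(a-\mu)t}=\frac{M_0 e^{\mu t}}{(a-\mu)t},
\end{equation*}
where I used the elementary bound $\int_x^\infty \frac{e^{-u}}{u}\rmd u\leq \frac{e^{-x}}{x}$ valid for $x>0$. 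Summing the two rays and dividing by $2\pi$ yields the first term $\frac{M_0 e^{\mu t}}{\pi(a-\mu)t}$ of the claimed bound.

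Third, for the vertical segment $\lambda=\mu+\rmi s$ with $s\in[-(a-\mu)|\tan\varphi|,(a-\mu)|\tan\varphi|]$, pulling the factor $e^{\mu t}$ out gives
\begin{equation*}
\frac{1}{2\pi\rmi}\int_{\text{vert}}e^{\lambda t}R(\lambda,A)\,\rmd\lambda=\frac{e^{\mu t}}{2\pi}\,\cV_A(t,\mu,\varphi),
\end{equation*}
by the very definition \eqref{def-V}. Taking the operator norm and bounding by the supremum over $t>0$ yields the second term of \eqref{V-estimate}. Adding the two contributions produces the claimed inequality. The main (mildly delicate) point is the contour-deformation step: verifying that the region swept between the two contours really lies in $\rho(A)$, which follows immediately from the assumption $\mu>\rms(A)$ combined with Hypothesis (H1). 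Everything else is routine manipulation of the resolvent bound.
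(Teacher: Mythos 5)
Your proof is correct and follows essentially the same route as the paper: the identical contour (two rays $\{a+se^{\pm\rmi\varphi}:s\geq(a-\mu)|\sec\varphi|\}$ joined by the vertical segment at $\mathrm{Re}\,\lambda=\mu$), the same resolvent bound on the rays yielding $\frac{M_0e^{\mu t}}{\pi(a-\mu)t}$, and the same identification of the vertical piece with $\frac{e^{\mu t}}{2\pi}\cV_A(t,\mu,\varphi)$. The only cosmetic differences are that you justify the representation by deforming the Lemma~\ref{l2.3} contour via Cauchy's theorem (the paper simply invokes the standard formula for the new contour) and you bound the ray integral via $\int_x^\infty e^{-u}u^{-1}\,\rmd u\leq e^{-x}/x$ rather than the paper's substitution, both giving the same constant.
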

\begin{proof} The proof uses the typical contour integral representation for sectorial operators. First we fix $\mu>s(A)$ and $\varphi\in(\frac{\pi}{2},\theta)$ and set $b=(a-\mu)|\tan\varphi|$ and $c=(a-\mu)|\sec\varphi|$. Next, we introduce the curves in the complex plane given by
\begin{equation}\label{l2.4.1}
\Gamma^\pm_{a,\mu,\varphi}=\{a+se^{\pm\rmi\varphi}:s\geq c\},\quad
\Gamma^\rmc_{a,\mu,\varphi}=\{\mu+\rmi s:-b\leq s\leq b\}.
\end{equation}
The path $\Gamma_{a,\varphi,\eps}$ is defined as the union $\Gamma^-_{a,\varphi,\eps}\cup\,\Gamma^\rmc_{a,\varphi,\eps}\cup\,\Gamma^+_{a,\varphi,\eps}$ oriented counterclockwise.
\begin{figure}[h]
\begin{center}
	\includegraphics[width=0.8\textwidth]{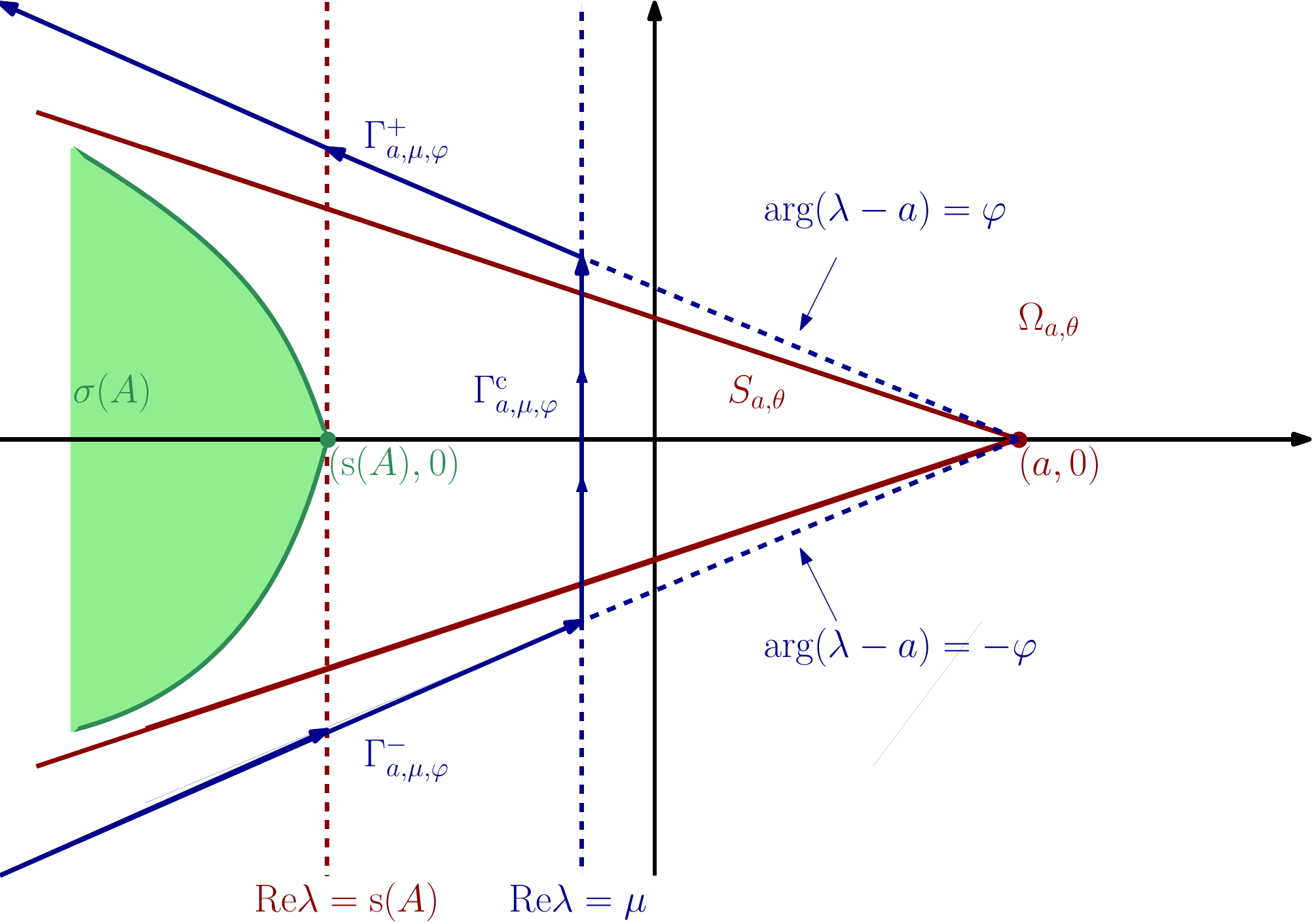}
	
	Figure 1. A plot of the spectrum of $A$, the sector $\Omega_{a,\theta}$ and the path $\Gamma_{a,\mu,\varphi}$ in the case when the vertex $a$ is positive.
\end{center}
\end{figure}
We note that this path is contained in the resolvent set of $A$ and surrounds, the spectrum of $A$	counterclockwise. It follows that (see, e.g., \cite[Chapter 2]{lunardi})
\begin{equation}\label{l2.4.2}
T(t)=\frac{1}{2\pi\rmi}\int_{\Gamma_{a,\mu,\varphi}} e^{\lambda t}R(\lambda,A)\rmd\lambda\;\mbox{for any}\; t>0.
\end{equation}
Since $\Gamma^\rmc_{a,\mu,\varphi}\subseteq\Omega_{a,\theta}$,  $a=\mu-c\cos\varphi$ from \eqref{sectorial-B} and \eqref{l2.4.1} we obtain that
\begin{align}\label{l2.4.3}
\Big\|\int_{\Gamma^\pm_{a,\mu,\varphi}} e^{\lambda t}R(\lambda,A)\rmd\lambda\Big\|&=\Big\|\int_{c}^{\infty}e^{(a+se^{\pm\rmi\varphi})t}R(a+se^{\pm\rmi\varphi},A)\,e^{\pm\rmi\varphi}\rmd s\Big\|\leq \int_{c}^{\infty}e^{(a+s\cos\varphi)t}\,\frac{M_0}{s}\rmd s\nonumber\\
&=\int_{c}^{\infty}e^{(\mu+(s-c)\cos\varphi)t}\,\frac{M_0}{s}\rmd s=M_0e^{\mu t}\int_{c}^{\infty}\frac{e^{(s-c)\cos\varphi t}}{s}\rmd s\nonumber\\
&=M_0e^{\mu t}\int_{0}^{\infty}\frac{e^{-\xi t}|\sec\varphi|}{c+|\sec\varphi|\xi}\rmd \xi=M_0e^{\mu t}\int_{0}^{\infty}\frac{e^{-\xi t}}{a-\mu+\xi}\rmd \xi\nonumber\\&\leq M_0e^{\mu t}\int_{0}^{\infty}\frac{e^{-\xi t}}{a-\mu}\rmd \xi=\frac{M_0e^{\mu t}}{(a-\mu)t}.
\end{align}
Unlike $\Lambda^\rmc_{a,\varphi,\eps}$ used in Lemma~\ref{l2.3} above, $\Gamma^\rmc_{a,\mu,\varphi}$ is not contained in the sector $\Omega_{a,\theta}$. To finish the proof of the lemma, we note that
\begin{equation}\label{l2.4.4}
\Big\|\int_{\Gamma^\rmc_{a,\mu,\varphi}} e^{\lambda t}R(\lambda,A)\rmd\lambda\Big\|=\Big\|\rmi e^{\mu t}\int_{-b}^{b} e^{\rmi st }R(\mu+\rmi s,A)\rmd s\Big\|\leq e^{\mu t}\sup_{t>0}\|\cV_A(t,\mu,\varphi)\|.
\end{equation}
The estimate \eqref{V-estimate} follows shortly from \eqref{l2.4.2}, \eqref{l2.4.3} and \eqref{l2.4.4}.
\end{proof}
We note that surprisingly the estimate \eqref{V-estimate} is more useful in the case when $a>0$. The main reason is that if $a>0$ and $\mu=-\varkappa q(\alpha)$, with the function $q$ as in Hypothesis (H3) and $\varkappa\in(0,1)$, then the constant $\frac{M_0}{\pi(a-\mu)}$ from \eqref{V-estimate} is uniformly bounded by $\frac{M_0}{\pi a}$. If $a=0$ and $\mu=-\varkappa q(\alpha)$ with $\alpha$ close to $0$, the constant $\frac{M_0}{\pi(a-\mu)}$ is of order $\mathcal{O}(\frac{1}{\alpha})$, making the estimate \eqref{V-estimate} not that useful.
\begin{remark}\label{r2.5}
Assuming Hypothesis (H1) and that $a>\rms(A)$, one can readily check that
\begin{equation}\label{l2.5.1}
\sup_{t>0}\|\cV_A(t,\mu,\varphi)\|\leq 2(a-\mu)|\tan\varphi|\sup\{\|R(\mu+\rmi s,A)\|:|s|\leq(a-\mu)|\tan\varphi|\}<\infty
\end{equation}
for any $\mu\in(s(A),a)$ and $\varphi\in(\frac{\pi}{2},\theta)$. From Lemma~\ref{l2.4} it follows that
\begin{equation}\label{l2.5.2}
\|T(t)\|\leq \frac{M_0e^{\mu t}}{\pi(a-\mu)t}+\frac{(a-\mu)|\tan\varphi|e^{\mu t}}{\pi}\sup\{\|R(\mu+\rmi s,A)\|:|s|\leq(a-\mu)|\tan\varphi|\}
\end{equation}
for any $t>0$ and any $\mu>\mathrm{s}(A)$, $\varphi\in(\frac{\pi}{2},\theta)$. This result for the special case of analytic semigroups on Banach spaces resembles the famous Gearhart-Pr\"uss result for semigroups on Hilbert spaces (\cite{Ge,Pr}), which was optimized by  Helffer and\ Sj\"{o}strand (\cite{HS1,HS2}). 
Indeed, the Gearhart-Pr\"uss theorem says that a strongly continuous semigroup is exponentially stable provided its generator has no spectrum in the right half plane and its resolvent operator is bounded along the whole imaginary axis.
In case of {\em analytic} semigroups, however, it is enough to estimate in \eqref{l2.5.2} the resolvent operator of $A$ along a {\em bounded} vertical segment, not the entire vertical line as it is needed for $C_0$-semigroups that are not necessarily analytic.
\end{remark}

Next, we collect a couple of other properties of the operator-valued function $\cV_A$. In particular, we are interested to find an alternative formula involving convolutions.
\begin{remark}\label{r2.6}
Setting $b=(a-\mu)|\tan\varphi|$ and using the fact that the resolvent operator of $A$ is the Laplace Transform of the analytic semigroup $\{T(t)\}_{t\geq 0}$, it follows that
\begin{align}\label{r2.6.1}
\cV_A(t,\mu,\varphi)&=\int_{-b}^{b}e^{\rmi st}\int_0^\infty e^{-(\mu+\rmi s)\xi}T(\xi)\rmd\xi\rmd s=\int_{-b}^{b}\int_0^\infty e^{\rmi(t-\xi)s}e^{-\mu\xi}T(\xi)\rmd\xi\rmd s\nonumber\\
&=\int_0^\infty \Big(\int_{-b}^{b}e^{\rmi(t-\xi)s}\rmd s\Big)e^{-\mu\xi}T(\xi)\rmd\xi=\int_0^\infty \frac{\sin{\big((a-\mu)(t-\xi)|\tan\varphi|\big)}}{t-\xi}e^{-\mu\xi}T(\xi)\rmd\xi
\end{align}
for any $\mu>s(A)$ and $\varphi\in(\frac{\pi}{2},\theta)$. This representation as a convolution shows that the estimate \eqref{V-estimate} is somewhat similar to convolution type the results of Latushkin and Yurov for the case of general semigroups on Banach spaces (\cite[Theorem 1.2]{LY}).
\end{remark}
\begin{remark}\label{r2.7}
From Hypothesis (H2)(ii) we infer that $0$ is a pole of order $1$ of $R(\cdot,A)$, hence $\lim_{\lambda\to0}{\lambda R(\lambda,A)}$ does exist in the operator norm. We infer that the operator valued function
\begin{equation}\label{r2.7.1}
\lambda\to\lambda R(\lambda,A):\Omega_{a,\theta}\cup\{\lambda\in\CC:\mathrm{Re}\lambda>-\nu\}\setminus\{0\}\to\mathcal{B}(\bX)
\end{equation}
can be extended analytically to $\Omega_{a,\theta}\cup\{\lambda\in\CC:\mathrm{Re}\lambda>-\nu\}$. For simplicity, we will use the same notation for this function and set $\big(\lambda R(\lambda,A)\big)_{|\lambda=0}=\lim_{\lambda\to0}{\lambda R(\lambda,A)}$.
\end{remark}
Next, we recall a sufficient condition that guaranties that a linear operator is sectorial due to A. Lunardi (\cite{lunardi}).
\begin{lemma}\label{lemma-Lunardi}
Assume that $A:\dom(A)\subseteq\bX\to\bX$ is a linear operator and $r\in\RR$ such that
\begin{equation}\label{lunardi1}
\{\lambda\in\CC:{\rm Re}\lambda\geq\omega\}\subseteq\rho(A)\quad\mbox{and}\quad\|\lambda R(\lambda,A)\|\leq N_0\quad\mbox{whenever}\quad{\rm Re}\lambda\geq\omega.
\end{equation}
Then, the operator $A$ is sectorial. More precisely,
\begin{equation}\label{lunardi2}
\Omega_{\omega,\tau}\subseteq\rho(A)\quad\mbox{and}\quad\|R(\lambda,A)\|\leq \frac{\sqrt{4N_0^2+1}}{|\lambda-\omega|}\quad\mbox{for any}\quad\lambda\in\Omega_{\omega,\tau},
\end{equation}
where $\tau=\pi-\arctan(2N_0)$. 	
\end{lemma}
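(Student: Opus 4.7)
The plan is classical: analytically extend $R(\cdot,A)$ from the closed half-plane $\{\mathrm{Re}\,\lambda \geq \omega\}$ into the sector $\Omega_{\omega,\tau}$ via Neumann series anchored at points on the vertical line $\mathrm{Re}\,\lambda = \omega$, and then optimize a trigonometric quantity to pin down the constant $\sqrt{4N_0^2+1}$. I read the hypothesis \eqref{lunardi1} in the form $\|R(\lambda,A)\| \leq N_0/|\lambda-\omega|$ on $\{\mathrm{Re}\,\lambda \geq \omega\}$, which (after a shift of $A$ by $\omega I$) is what the literal statement reduces to and what matches the shape of the intended conclusion.

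If $\lambda \in \Omega_{\omega,\tau}$ satisfies $\mathrm{Re}(\lambda-\omega) \geq 0$, the hypothesis already delivers $\|R(\lambda,A)\| \leq N_0/|\lambda-\omega|$, which is sharper than the claim. Otherwise, write $\lambda - \omega = \rho e^{i\theta}$ with $\rho > 0$ and $|\theta| \in (\pi/2,\tau)$; by symmetry I may assume $\theta>0$. I would then choose the ``foot point'' $\lambda_0 := \omega + i\rho\sin\theta$, which lies on the vertical line and hence in $\rho(A)$ with $\|R(\lambda_0,A)\| \leq N_0/(\rho\sin\theta)$, and form
\[
R(\lambda,A) \;=\; \sum_{n=0}^{\infty}(\lambda_0-\lambda)^n R(\lambda_0,A)^{n+1}.
\]
Since $|\lambda-\lambda_0| = \rho|\cos\theta|$, the convergence condition $|\lambda-\lambda_0|\,\|R(\lambda_0,A)\| < 1$ becomes $N_0|\cos\theta|/\sin\theta < 1$, i.e.\ $\tan(\pi-\theta) > N_0$. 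The assumption $\theta < \tau = \pi-\arctan(2N_0)$ in fact gives $\tan(\pi-\theta) > 2N_0$, so the series converges with factor $< 1/2$, establishing $\Omega_{\omega,\tau} \subset \rho(A)$ together with the analytic extension of $R(\cdot,A)$.

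Summing the geometric majorant then yields
\[
\|R(\lambda,A)\| \;\leq\; \frac{N_0/(\rho\sin\theta)}{1 - N_0|\cos\theta|/\sin\theta} \;=\; \frac{N_0}{\rho\bigl(\sin\theta - N_0|\cos\theta|\bigr)},
\]
so the whole question reduces to a lower bound for $\sin\theta - N_0|\cos\theta|$ on $[\pi/2,\tau]$. On that interval $|\cos\theta| = -\cos\theta$ and the derivative equals $\cos\theta - N_0\sin\theta < 0$, so the function is strictly decreasing and attains its minimum at $\theta = \tau$. Using $\tan(\pi-\tau) = 2N_0$, one reads off $\sin\tau = 2N_0/\sqrt{4N_0^2+1}$ and $|\cos\tau| = 1/\sqrt{4N_0^2+1}$, whence $\sin\tau - N_0|\cos\tau| = N_0/\sqrt{4N_0^2+1}$, and the stated bound $\|R(\lambda,A)\| \leq \sqrt{4N_0^2+1}/|\lambda-\omega|$ drops out.

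The only genuine subtlety here is precisely this trigonometric balance: the $2N_0$ inside the $\arctan$ defining $\tau$ is exactly what equalizes the two competing terms $\sin\theta$ and $N_0|\cos\theta|$ so as to produce the constant $\sqrt{4N_0^2+1}$; a careless choice of opening angle would yield a strictly worse resolvent constant. Everything else---Neumann expansion, absolute convergence on each disk, geometric summation---is routine.
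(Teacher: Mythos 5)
Your argument is correct, and it is essentially the proof from the source the paper cites: the lemma is stated here without proof as a recalled result from Lunardi, and Lunardi's argument is exactly this Neumann-series extension anchored at foot points on the vertical line $\mathrm{Re}\,\lambda=\omega$, with the factor-$\frac{1}{2}$ convergence threshold producing the angle $\tau=\pi-\arctan(2N_0)$ and a lower bound for $\sin\theta$ on $[\pi/2,\tau]$ producing the constant $\sqrt{4N_0^2+1}$. Your only deviation is cosmetic: you sum the geometric series exactly and then minimize $\sin\theta-N_0|\cos\theta|$, rather than bounding the sum by $2\|R(\lambda_0,A)\|$ and then bounding $\sin\theta$ from below; both routes collapse to the same constant because the extremal angle is $\theta=\tau$ in either case.

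One inaccuracy should be repaired. Replacing $A$ by $A-\omega I_\bX$ turns the hypothesis $\|\lambda R(\lambda,A)\|\leq N_0$ on $\{\mathrm{Re}\,\lambda\geq\omega\}$ into $\|(\mu+\omega)R(\mu,A-\omega I_\bX)\|\leq N_0$ on $\{\mathrm{Re}\,\mu\geq0\}$, not into $\|\mu R(\mu,A-\omega I_\bX)\|\leq N_0$, so the shift does not justify your reading $\|R(\lambda,A)\|\leq N_0/|\lambda-\omega|$. Fortunately the proof survives under the literal hypothesis: at the foot points you only need $\|R(\omega+\rmi r,A)\|\leq N_0/r$, which follows from $|\omega+\rmi r|\geq r$ for every real $\omega$, so the Neumann-series half of the argument is untouched; and on the half-plane $\{\mathrm{Re}\,\lambda\geq\omega\}$, where you invoke the hypothesis directly, one has $|\lambda-\omega|\leq|\lambda|+\omega\leq 2|\lambda|$ provided $\omega\geq0$, whence $\|R(\lambda,A)\|\leq N_0/|\lambda|\leq 2N_0/|\lambda-\omega|\leq\sqrt{4N_0^2+1}/|\lambda-\omega|$ there. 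For $\omega<0$ this last step breaks down (consider $\lambda$ purely imaginary and small, where $\|\lambda R(\lambda,A)\|\leq N_0$ carries no useful information while $|\lambda-\omega|$ stays of order $|\omega|$), so the statement should be read with $\omega\geq0$, which is how it is applied in the paper (with $\omega=a_1+1>0$ in Lemma~\ref{l3.1}).
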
	
It is well-known that any bounded perturbation of a sectorial operator is also sectorial. To finish this section, we discuss a sufficient condition that guarantees
that the angle of the sector remains the same.
\begin{lemma}\label{l2.10}
Assume Hypothesis (H1) and $W\in\mathcal{B}(\bX)$. Then, \begin{equation}\label{perturbed-sectorial}
\Omega_{\ooa,\theta}\subseteq\rho(A+W),\; \|R(\lambda,A+W)\|\leq \frac{2M_0(1+\csc\theta)}{|\lambda-\ooa|}\;\mbox{for any}\;\lambda\in\Omega_{\ooa,\theta},
\end{equation}
where $\ooa=a+2M_0\|W\|\csc\theta$.
\end{lemma}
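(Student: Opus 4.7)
The plan is to invoke the standard perturbation identity $R(\lambda,A+W) = R(\lambda,A)(I_\bX-WR(\lambda,A))^{-1}$, which is valid whenever $\lambda\in\rho(A)$ and $I_\bX-WR(\lambda,A)$ is invertible. As a preliminary I would verify that $\Omega_{\ooa,\theta}\subseteq\Omega_{a,\theta}$: writing $\lambda-\ooa=\rho e^{\rmi\phi}$ with $\rho>0$ and $|\phi|<\theta$, the map $c\mapsto\arg(c+\rho e^{\rmi\phi})$ is monotone on $[0,\infty)$, starting at $\phi$ and tending to $0$, so $|\arg(\lambda-a)|\leq|\phi|<\theta$. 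Hence Hypothesis (H1) gives $\lambda\in\rho(A)$ together with $\|R(\lambda,A)\|\leq M_0/|\lambda-a|$.

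The central ingredient is the geometric inequality $|\lambda-a|\geq(\ooa-a)\sin\theta$ for every $\lambda\in\Omega_{\ooa,\theta}$. Setting $c=\ooa-a$, the law of cosines gives $|\lambda-a|^2=c^2+2c\rho\cos\phi+\rho^2$, so the desired inequality reduces to showing $c^2\cos^2\theta+2c\rho\cos\phi+\rho^2\geq 0$, which follows at once from the identity
\begin{equation*}
c^2\cos^2\theta+2c\rho\cos\phi+\rho^2 = (c\cos\theta+\rho)^2+2c\rho(\cos\phi-\cos\theta),
\end{equation*}
since $|\phi|<\theta<\pi$ forces $\cos\phi>\cos\theta$ and both terms on the right hand side are non-negative. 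This is the only substantive step of the proof; it quantifies the fact that the original vertex $a$ stays at a definite distance from the shifted sector, and the precise shift $\ooa-a=2M_0\|W\|\csc\theta$ is calibrated so that the forthcoming Neumann argument closes.

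Indeed, substituting this value yields $|\lambda-a|\geq 2M_0\|W\|$, hence $\|WR(\lambda,A)\|\leq\|W\|M_0/|\lambda-a|\leq 1/2$, so the Neumann series gives invertibility of $I_\bX-WR(\lambda,A)$ with $\|(I_\bX-WR(\lambda,A))^{-1}\|\leq 2$. Consequently $\lambda\in\rho(A+W)$ and $\|R(\lambda,A+W)\|\leq 2M_0/|\lambda-a|$. Finally, combining the triangle inequality $|\lambda-\ooa|\leq|\lambda-a|+c$ with $c\leq|\lambda-a|\csc\theta$ (a rephrasing of the geometric inequality) gives $|\lambda-\ooa|\leq(1+\csc\theta)|\lambda-a|$, which converts the denominator and produces the claimed bound $\|R(\lambda,A+W)\|\leq 2M_0(1+\csc\theta)/|\lambda-\ooa|$.
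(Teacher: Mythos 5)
Your proof is correct and follows essentially the same route as the paper: the factorization $\lambda I_\bX-A-W=\big(I_\bX-WR(\lambda,A)\big)(\lambda I_\bX-A)$ combined with the Neumann series, the key distance estimate $|\lambda-a|\geq(\ooa-a)\sin\theta=2M_0\|W\|$, and the triangle-inequality conversion $|\lambda-\ooa|\leq(1+\csc\theta)|\lambda-a|$. The only difference is cosmetic: you supply an explicit law-of-cosines verification of the distance inequality where the paper simply cites $\mathrm{dist}(a,\Omega_{\ooa,\theta})=(\ooa-a)\sin\theta$.
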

\begin{proof}
Fix $\lambda\in\Omega_{\ooa,\theta}$. Since $\ooa>a$, we have  $\lambda\in\Omega_{a,\theta}\subseteq\rho(A)$. Moreover, from Hypothesis (H1) we obtain that
\begin{equation}\label{l2.10.1}
|\lambda-a|\geq\mathrm{dist}(a,\Omega_{\ooa,\theta})=(\ooa-a)\sin\theta=2M\|W\|,\;\mbox{and thus}\;\|WR(\lambda,A)\|\leq\frac{M_0\|W\|}{|\lambda-a|}\leq\frac{1}{2}.
\end{equation}
It follows that $I_{\bX}-WR(\lambda,A)$ is invertible with bounded inverse and $\|\big(I_{\bX}-WR(\lambda,A)\big)^{-1}\|\leq 2$. Since $\lambda I_{\bX}-A-W=\big(I_{\bX}-WR(\lambda,A)\big)(\lambda I_{\bX}-A)$, from \eqref{l2.10.1} we infer that $\lambda\in\rho(A+W)$ and
\begin{align}\label{l2.10.2}
\|R(\lambda,A+W)\|&\leq\|R(\lambda,A)\|\|\big(I_{\bX}-WR(\lambda,A)\big)^{-1}\|\leq\frac{2M_0}{|\lambda-a|}=\frac{2M_0\big|\lambda-a-2M_0\|W\|\csc\theta\big|}{|\lambda-\ooa|\,|\lambda-a|}\nonumber\\&\leq\frac{2M_0}{|\lambda-\ooa|}\Big(1+\frac{2M_0\|W\|\csc\theta}{|\lambda-a|}\Big)\leq\frac{2M_0(1+\csc\theta)}{|\lambda-\ooa|},
\end{align}
proving the lemma.	
\end{proof}

\section{Norm estimates of families of analytic semigroups}\label{s3}
In this section we discuss the uniform exponential stability of families of analytic semigroups whose generators are bounded perturbations of a sectorial operator satisfying Hypotheses (H1) and (H2). Throughout this section we assume that $E:[0,\infty)\to\mathcal{B}(\bX)$ is such that $E(0)=0$, and the family of operators denotes by $A_\alpha:=A+E(\alpha)$, $\alpha\geq0$, satisfies Hypothesis (H3). In addition, we recall that $\{T_\alpha(t)\}_{t\geq 0}$ denotes the semigroup of linear operators generated by $A_\alpha$, $\alpha\geq 0$. 
\subsection{Basic Properties of families of analytic semigroups}\label{sec3-1}

First, we will show that the operator $A_\alpha$, $\alpha\geq 0$, is sectorial and all the constants from \eqref{sectorial-B} can be chosen independent of $\alpha$.
\begin{lemma}\label{l3.1}
Assume Hypotheses (H1)--(H3). Then, there exist $\ta>0$, $\tte\in(\frac{\pi}{2},\pi)$ and $\tM>0$, independent of $\alpha\geq0$, such that
\begin{equation}\label{uniform-sectorial-B}
\Omega_{\ta,\tte}\subseteq\rho(A_\alpha),\quad \|R(\lambda,A_\alpha)\|\leq \frac{\tM}{|\lambda-\ta|}\quad\mbox{for any}\quad\lambda\in\Omega_{\ta,\tte},\;\alpha\geq 0.
\end{equation}
The constants $\ta$, $\tte$ and $\tM$ depend on the unperturbed operator $A$ and the functions $\|E(\cdot)\|$ and $q$ and related constants in Hypotheses (H3).	
\end{lemma}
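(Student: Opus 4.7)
The plan is to combine the Neumann series argument underlying the one-shot perturbation result of Lemma~\ref{l2.10} with the uniform right-half-plane resolvent bound of Hypothesis~(H3)(i). The subtlety is that a direct application of Lemma~\ref{l2.10} to $A$ with perturbation $E(\alpha)$ produces a sector whose vertex $a+2M_0\|E(\alpha)\|\csc\theta$ escapes to infinity with $\|E(\alpha)\|$, hence is not uniform in $\alpha$. I will therefore fix a suitable vertex $\tilde a$ and a slightly narrower angle $\tilde\theta$, and partition $\Omega_{\tilde a,\tilde\theta}$ into a ``far'' region, where Neumann series from $R(\lambda,A)$ converges uniformly, and a ``near'' region, which is forced to sit inside the half-plane $\{\mathrm{Re}\,\lambda>-q(\alpha)/2\}$ so that Hypothesis~(H3)(i) applies.

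For the setup I would use Hypothesis~(H3)(iii) to pick $\alpha_1>0$ and $C>0$ with $\|E(\alpha)\|\leq Cq(\alpha)$ for every $\alpha\geq\alpha_1$, and then use continuity of $E$ with $E(0)=0$ to bound $\|E(\alpha)\|\leq K$ on $[0,\alpha_1]$. Next I would choose $\tilde\theta\in(\frac{\pi}{2},\theta)$ close enough to $\frac{\pi}{2}$ so that $2M_0C|\cos\tilde\theta|<\frac{1}{2}$, which is possible since $|\cos\tilde\theta|\to 0^+$ as $\tilde\theta\to\frac{\pi}{2}^+$, and finally pick $\tilde a>\max\{|a|,\,a+2M_0K\csc\tilde\theta\}$. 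The latter choice guarantees, via the elementary identity $\mathrm{dist}(a,\Omega_{\tilde a,\tilde\theta})=(\tilde a-a)\sin\tilde\theta$, that the closed disc $\overline{D(a,2M_0K)}$ is contained in $\CC\setminus\Omega_{\tilde a,\tilde\theta}$.

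With this setup in place, fix $\alpha\geq 0$ and $\lambda\in\Omega_{\tilde a,\tilde\theta}$ and split into two cases. In the \emph{far} case $|\lambda-a|>2M_0\|E(\alpha)\|$, the inclusion $\Omega_{\tilde a,\tilde\theta}\subseteq\Omega_{a,\theta}$ and Hypothesis~(H1) give $\|E(\alpha)R(\lambda,A)\|<\frac{1}{2}$, so Neumann series yield $\lambda\in\rho(A_\alpha)$ with $\|R(\lambda,A_\alpha)\|\leq 2M_0/|\lambda-a|$; since $|\lambda-a|\geq(\tilde a-a)\sin\tilde\theta$ implies $|\lambda-\tilde a|\leq (1+\csc\tilde\theta)|\lambda-a|$, this is turned into the bound $2M_0(1+\csc\tilde\theta)/|\lambda-\tilde a|$. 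In the \emph{near} case $|\lambda-a|\leq 2M_0\|E(\alpha)\|$, the choice of $\tilde a$ forces $\alpha\geq\alpha_1$, so $|\lambda-\tilde a|\leq 2M_0Cq(\alpha)+(\tilde a-a)$; combining this with $\mathrm{Re}\,\lambda\geq\tilde a-|\lambda-\tilde a||\cos\tilde\theta|$ and the smallness condition $2M_0C|\cos\tilde\theta|<\frac{1}{2}$ produces $\mathrm{Re}\,\lambda>-q(\alpha)/2$. Hypothesis~(H3)(i) then delivers $\|R(\lambda,A_\alpha)\|\leq 2M_1/q(\alpha)$, which in turn is bounded by $\tilde M'/|\lambda-\tilde a|$ using the bound on $|\lambda-\tilde a|$ and the lower bound $q(\alpha)\geq q(\alpha_1)>0$ from monotonicity of $q$. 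Taking $\tilde M$ to be the larger of the two resulting constants gives \eqref{uniform-sectorial-B}.

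The main obstacle is the geometric tuning in the near case: without the requirement $|\cos\tilde\theta|\ll 1/(M_0C)$, the residual disc of radius $\sim q(\alpha)$ around $a$ inside $\Omega_{\tilde a,\tilde\theta}$ would reach into $\{\mathrm{Re}\,\lambda<-q(\alpha)\}$, where Hypothesis~(H3)(i) provides no information and the spectrum of $A_\alpha$ could in fact enter. Once the compatibility between $\tilde\theta$ and the growth constant $C$ from (H3)(iii) is correctly balanced, everything else reduces to the standard Neumann series calculation and routine estimates comparing $|\lambda-a|$ and $|\lambda-\tilde a|$ within a sector of fixed angle.
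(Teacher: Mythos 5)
Your argument is correct, but it reaches \eqref{uniform-sectorial-B} by a genuinely different route than the paper. The paper's proof never partitions the sector: it first establishes a \emph{uniform bound on a fixed right half-plane}, namely $\|\lambda R(\lambda,A_\alpha)\|\leq \tN$ for ${\rm Re}\,\lambda\geq a_1+1$ and all $\alpha\geq0$ (using the resolvent identity together with \eqref{spectram-A-alpha} when $\alpha\geq\alpha_0$, and the one-shot bounded-perturbation Lemma~\ref{l2.10} on the compact range $[0,\alpha_0]$ where $\sup\|E(\alpha)\|<\infty$), and then invokes Lunardi's criterion, Lemma~\ref{lemma-Lunardi}, which converts the half-plane bound into sectoriality with vertex $\ta=a_1+1$, angle $\tte=\pi-\arctan(2\tN)$ and constant $\tM=\sqrt{4\tN^2+1}$, all manifestly $\alpha$-independent. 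You instead build the sector by hand, splitting $\Omega_{\ta,\tte}$ into a far region (Neumann series off $R(\lambda,A)$, exactly the computation inside Lemma~\ref{l2.10}) and a near region, which your choice of vertex excludes for $\alpha\leq\alpha_1$ and which for $\alpha\geq\alpha_1$ is squeezed into $\{{\rm Re}\,\lambda>-q(\alpha)/2\}$ by the tuning $2M_0C|\cos\tte|<\tfrac12$, so that \eqref{spectram-A-alpha} applies. Both proofs are complete and produce an angle forced close to $\tfrac{\pi}{2}$ when the constants are large; the paper's version is shorter because Lunardi's lemma absorbs all the sector geometry, while yours makes explicit the mechanism that saves the day — the residual spectrum of $A_\alpha$ near $a$ can only appear when $\|E(\alpha)\|\sim q(\alpha)$ is large, and there \eqref{spectram-A-alpha} takes over. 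One small point to tidy up: in the near case the positivity of $\ta(1-|\cos\tte|)+a|\cos\tte|$ needs either the normalization $a>0$ (which the paper adopts at the outset, without loss of generality) or the extra requirement $|\cos\tte|\leq\tfrac12$; with that noted, the estimates close as you describe.
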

\begin{proof}
Without loss of generality we can assume that $a>0$, where $a$ is the vertex of the sector from Hypothesis (H1). From Hypothesis (H3)(iii) we have there exists $\alpha_0>0$ such that $\|E(\alpha)\|\leq (\ell+1)q(\alpha)$ for any $\alpha\geq \alpha_0$. From Hypotheses (H2) and (H3) respectively, it is clear that $\{\lambda\in\CC:\mathrm{Re}\lambda\geq a+1\}\subseteq\rho(A)\cap\rho(A_\alpha)$ for any $\alpha\geq \alpha_0$. Moreover, 
\begin{equation}\label{l3.1.1}
R(\lambda,A_\alpha)-R(\lambda,A)=R(\lambda,A_\alpha)E(\alpha)R(\lambda,A)\quad\mbox{whenever}\quad{\rm Re}\lambda\geq a+1,\;\alpha\geq \alpha_0.
\end{equation}
From \eqref{l3.1.1}, Hypothesis (H1) and Hypothesis (H3)(i) we obtain that
\begin{align}\label{l3.1.2}
\|\lambda R(\lambda,A_\alpha)\|&\leq\|\lambda R(\lambda,A)\|+\|R(\lambda,A_\alpha)\|\,\|E(\alpha)\|\,\|\lambda R(\lambda,A)\|\leq \frac{M_0|\lambda|}{|\lambda-a|}\Big(1+\frac{M_1\|E(\alpha)\|}{\mathrm{Re}\lambda+q(\alpha)}\Big)\nonumber\\
&\leq\frac{M_0|\lambda|}{|\lambda-a|}\Big(1+\frac{M_1\|E(\alpha)\|}{q(\alpha)}\Big)\leq\frac{|\lambda|}{|\lambda-a|}M_0\big(1+M_1(\ell+1)\big)\nonumber\\
&\leq M_0\big(1+M_1(\ell+1)\big)\Big(1+\frac{a}{|\lambda-a|}\Big)\leq M_0(1+a) \big(1+M_1(\ell+1)\big)
\end{align}
whenever ${\rm Re}\lambda\geq a+1$ and $\alpha\geq \alpha_0$. From Hypothesis (H3) we infer that $\sup_{\alpha\in[0,\alpha_0]}\|E(\alpha)\|<\infty$.  Lemma~\ref{l2.10} yields
\begin{equation}\label{l3.1.3}
\Omega_{a_1,\theta}\subseteq\rho(A_\alpha),\quad \|R(\lambda,A_\alpha)\|\leq \frac{2M_0(1+\csc\theta)}{|\lambda-a_1|}\quad\mbox{for any}\quad\lambda\in\Omega_{a_1,\theta},\;\alpha\in[0,\alpha_0],
\end{equation}
where $a_1=a+2M_0\sup_{\alpha\in[0,\alpha_0]}\|E(\alpha)\|\csc\theta\geq a$. It follows that $\{\lambda\in\CC:\mathrm{Re}\lambda\geq a_1+1\}\subseteq\rho(A_\alpha)$ for any $\alpha\in[0,\alpha_0]$ and
\begin{equation}\label{l3.1.4}
\|\lambda R(\lambda,A_\alpha)\|\leq \frac{2M_0(1+\csc\theta)|\lambda|}{|\lambda-a_1|}\leq 2M_0(1+\csc\theta)\Big(1+\frac{a_1}{|\lambda-a_1|}\Big)\leq 2M_0(1+\csc\theta)(1+a_1)
\end{equation}
whenever ${\rm Re}\lambda\geq a_1+1$ and $\alpha\in[0,\alpha_0]$. Since $a_1\geq a$ from \eqref{l3.1.2} and \eqref{l3.1.4} it follows that $\{\lambda\in\CC:\mathrm{Re}\lambda\geq a_1+1\}\subseteq\rho(A_\alpha)$ for any $\alpha\geq 0$ and
\begin{equation}\label{l3.1.5}
\|\lambda R(\lambda,A_\alpha)\|\leq M_0(1+a_1)\big(1+\max\{\csc\theta,M_1(\ell+1)\}\big)\quad\mbox{whenever}\quad{\rm Re}\lambda\geq a_1+1,\alpha\geq 0.
\end{equation}
Setting $\ta=a_1+1$, $\tN=M_0(1+a_1)(1+\max\{\csc\theta,M_1(\ell+1)\})$, $\tM=\sqrt{4\tN^2+1}$ and $\tte=\pi-\arctan(2\tN)$, from Lemma~\ref{lemma-Lunardi} and \eqref{l3.1.5} we infer that
\begin{equation}\label{l3.1.6}
\Omega_{\ta,\tte}=\Omega_{a_1+1,\tte}\subseteq\rho(A_\alpha)\;\mbox{and}\;\|R(\lambda,A_\alpha)\|\leq \frac{\sqrt{4\tN^2+1}}{|\lambda-a_1-1|}=\frac{\tM}{|\lambda-\ta|}\;\mbox{for any}\;\lambda\in\Omega_{\ta,\tte},\,\alpha\geq 0,
\end{equation}
proving the lemma.
\end{proof}
In the next lemma we apply Lemma~\ref{l2.4} to estimate the norm of the semigroup $\{T_\alpha(t)\}_{t\geq 0}$ for $\alpha$ away from $0$.
\begin{lemma}\label{l3.2}
Assume Hypotheses (H1)--(H3). Then the following estimate holds,
\begin{equation}\label{T-alpha-delta}
\|T_\alpha(t)\|\leq \overline{M}(\varkappa,\alpha)e^{-\varkappa q(\alpha)t}\quad\mbox{for any}\quad t\geq0,\;\alpha>0,\;\varkappa\in(0,1).
\end{equation}
Here the function $\overline{M}:(0,1)\times(0,\infty)\to(0,\infty)$ is defined by 
\begin{equation}\label{overline-M}\begin{split}
\overline{M}(\varkappa,\alpha)&=\frac{1}{\pi}\max\Bigg\{\frac{\tM q(\alpha)}{\ta+\varkappa q(\alpha)}+\frac{M_1\big(\ta+\varkappa q(\alpha)\big)|\tan(\frac{\tte}{2}+\frac{\pi}{4})|}{(1-\varkappa) q(\alpha)},\\
&\qquad\qquad\qquad\qquad\frac{\tM\Big(e(2\tte+\pi)-4\sec(\frac{\tte}{2}+\frac{\pi}{4})\Big)}{4}e^{\frac{\ta}{q(\alpha)}+1}\Bigg\}.\end{split}
\end{equation}
In particular, for any $\delta>0$ the family of semigroups $\{T_\alpha(t)\}_{t\geq0}$ is exponentially stable uniformly for $\alpha\geq\delta$.
\end{lemma}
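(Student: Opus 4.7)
The plan is to combine the uniform sectorial bounds of Lemma~\ref{l3.1} with the two complementary semigroup estimates of Lemma~\ref{l2.3} and Lemma~\ref{l2.4}, splitting the time axis at $t=1/q(\alpha)$. The two terms inside the max defining $\overline{M}(\varkappa,\alpha)$ correspond exactly to the large-time and small-time regimes, so the challenge is only bookkeeping the constants so that they match.

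First I would invoke Lemma~\ref{l3.1} to get $\ta>0$, $\tte\in(\pi/2,\pi)$ and $\tM>0$ such that $A_\alpha$ is sectorial on $\Omega_{\ta,\tte}$ with resolvent bound $\tM/|\lambda-\ta|$, all uniformly in $\alpha\geq 0$. Throughout I will use the fixed intermediate angle $\varphi:=\tte/2+\pi/4\in(\pi/2,\tte)$.

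For the large-time regime $t\geq 1/q(\alpha)$, I would apply Lemma~\ref{l2.4} to $A_\alpha$ with $\mu:=-\varkappa q(\alpha)$. The hypothesis $\mu>\rms(A_\alpha)$ is guaranteed by Hypothesis (H3)(i), since $\rms(A_\alpha)\leq -q(\alpha)<-\varkappa q(\alpha)$ for $\varkappa\in(0,1)$, and $\mu<\ta$ is automatic because $\ta>0$. The first term in \eqref{V-estimate} becomes $\tM e^{-\varkappa q(\alpha) t}/(\pi(\ta+\varkappa q(\alpha))t)$, and using $1/t\leq q(\alpha)$ it is dominated by $\tM q(\alpha) e^{-\varkappa q(\alpha) t}/(\pi(\ta+\varkappa q(\alpha)))$. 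For the resolvent term I would use Remark~\ref{r2.5} together with Hypothesis~(H3)(i): on the vertical segment $\mathrm{Re}\lambda=\mu$ one has $\|R(\mu+is,A_\alpha)\|\leq M_1/(-\varkappa q(\alpha)+q(\alpha))=M_1/((1-\varkappa)q(\alpha))$, so the second term is bounded by
\[
\frac{(\ta+\varkappa q(\alpha))|\tan\varphi|}{\pi}\cdot\frac{M_1}{(1-\varkappa)q(\alpha)}\,e^{-\varkappa q(\alpha) t}.
\]
Adding these two contributions reproduces the first element of the max in \eqref{overline-M}.

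For the small-time regime $0\leq t\leq 1/q(\alpha)$, I would apply Lemma~\ref{l2.3} to $A_\alpha$ with vertex $\ta$ and the same $\varphi=\tte/2+\pi/4$, yielding
\[
\|T_\alpha(t)\|\leq \frac{\tM(e\varphi-\sec\varphi)}{\pi}\,e^{\ta t}.
\]
Multiplying and dividing by $e^{-\varkappa q(\alpha) t}$ and bounding $(\ta+\varkappa q(\alpha))t\leq \ta/q(\alpha)+\varkappa<\ta/q(\alpha)+1$ gives $e^{\ta t}\leq e^{\ta/q(\alpha)+1}\,e^{-\varkappa q(\alpha)t}$. The elementary identity $e\varphi-\sec\varphi=(e(2\tte+\pi)-4\sec(\tte/2+\pi/4))/4$ then shows that this bound matches the second element inside the max in \eqref{overline-M}. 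Taking the maximum of the two regime estimates establishes \eqref{T-alpha-delta}.

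Finally, for the uniform stability statement, I would observe that by Hypothesis~(H3)(ii) the function $q$ is increasing with $q(\delta)\geq q_1\min\{\delta,q_2\}>0$ for any $\delta>0$, so $q(\alpha)\geq q(\delta)>0$ for all $\alpha\geq\delta$. Consequently both expressions inside the max in \eqref{overline-M} are uniformly bounded in $\alpha\in[\delta,\infty)$: the term $q(\alpha)/(\ta+\varkappa q(\alpha))\leq 1/\varkappa$, the ratio $(\ta+\varkappa q(\alpha))/q(\alpha)$ is bounded by $\ta/q(\delta)+\varkappa$, and the exponential $e^{\ta/q(\alpha)+1}\leq e^{\ta/q(\delta)+1}$. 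Hence $\sup_{\alpha\geq\delta}\overline{M}(\varkappa,\alpha)<\infty$, which yields uniform exponential stability on $\alpha\geq\delta$. No substantial obstacle is anticipated; the main care is verifying the constants and the choice $\varphi=\tte/2+\pi/4$ reproduce the precise form of $\overline{M}$.
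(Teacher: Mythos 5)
Your proposal is correct and follows essentially the same route as the paper's proof: Lemma~\ref{l3.1} for the uniform sector, Lemma~\ref{l2.4} with $\mu=-\varkappa q(\alpha)$ and the resolvent bound from Hypothesis (H3)(i) on the regime $t\geq 1/q(\alpha)$, Lemma~\ref{l2.3} with vertex $\ta$ on $t\leq 1/q(\alpha)$, and the monotonicity of $q$ for the uniform statement on $\alpha\geq\delta$. The constants and the choice $\varphi=\frac{\tte}{2}+\frac{\pi}{4}$ match the paper exactly.
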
	
\begin{proof}
Fix $\varkappa\in (0,1)$, $\alpha>0$ and let $\mu=-\varkappa q(\alpha)$. We set $\varphi=\frac{\tte}{2}+\frac{\pi}{4}\in(\frac{\pi}{2},\tte)$ and $\widetilde{b}=(\ta-\mu)|\tan\varphi|$. From Hypothesis (H3) we have $\mu>\rm{s}(A_\alpha)$ and
\begin{equation}\label{l3.2.1}
\|R(\mu+\rmi s,A_\alpha)\|\leq\frac{M_1}{{\rm Re}(\mu+\rmi s)+q(\alpha)}=\frac{M_1}{\mu+q(\alpha)}=\frac{M_1}{(1-\varkappa)q(\alpha)}\quad\mbox{for any}\quad s\in\RR.
\end{equation}
Using definition \eqref{def-V} leads to
\begin{equation}\label{l3.2.2}
\|\cV_{A_\alpha}(t,\mu,\varphi)\|=\Big\|\int_{-\widetilde{b}}^{\widetilde{b}}e^{\rmi st}R(\mu+\rmi s,A_\alpha)\rmd s\Big\|\leq\frac{2M_1\widetilde{b}}{(1-\varkappa)q(\alpha)}=\frac{2M_1(\ta-\mu)|\tan(\frac{\tte}{2}+\frac{\pi}{4})|}{(1-\varkappa)q(\alpha)}
\end{equation}
for any $t>0$. From \eqref{uniform-sectorial-B}, \eqref{l3.2.2} and Lemma~\ref{l2.4} it follows that
\begin{align}\label{l3.2.3}
\|T_\alpha(t)\|&\leq\frac{\tM e^{\mu t}}{\pi(\ta-\mu)t}+\frac{e^{\mu t}}{2\pi}\cdot\frac{2M_1(\ta-\mu)|\tan(\frac{\tte}{2}+\frac{\pi}{4})|}{(1-\varkappa) q(\alpha)}\nonumber\\
&=\frac{e^{-\varkappa q(\alpha)t}}{\pi}\Bigg(\frac{\tM }{(\ta+\varkappa q(\alpha))t}+\frac{M_1(\ta+\varkappa q(\alpha))|\tan(\frac{\tte}{2}+\frac{\pi}{4})|}{(1-\varkappa)q(\alpha)}\Bigg)\;\mbox{for any}\;t>0.
\end{align}
If $t\geq\frac{1}{q(\alpha)}$ then $(\ta+\varkappa q(\alpha))t\geq\frac{\ta}{q(\alpha)}+\varkappa$, thus from \eqref{l3.2.3} we have 
\begin{equation}\label{l3.2.4}
\|T_\alpha(t)\|\leq\frac{e^{-\varkappa q(\alpha)t}}{\pi}\Big(\frac{\tM q(\alpha)}{\ta+\varkappa q(\alpha)}+\frac{M_1(\ta+\varkappa q(\alpha))|\tan(\frac{\tte}{2}+\frac{\pi}{4})|}{(1-\varkappa)q(\alpha)}\Big)\;\mbox{for any}\;t\geq\frac{1}{q(\alpha)}.
\end{equation}
From Lemma~\ref{l2.3} and Lemma~\ref{l3.1} we infer 
\begin{align}\label{l3.2.5}
\|T_\alpha(t)\|&\leq\frac{\tM(e\varphi-\sec\varphi)}{\pi}e^{\ta t}=\frac{\tM\Big(e(2\tte+\pi)-4\sec(\frac{\tte}{2}+\frac{\pi}{4})\Big)}{4\pi}e^{(\ta+\varkappa q(\alpha))t} e^{-\varkappa q(\alpha)t}\nonumber\\
&\leq \frac{\tM\Big(e(2\tte+\pi)-4\sec(\frac{\tte}{2}+\frac{\pi}{4})\Big)}{4\pi}e^{\frac{\ta}{q(\alpha)}+1} e^{-\varkappa q(\alpha)t}
\end{align}
for any $t\in\big[0,\frac{1}{q(\alpha)}\big)$. The estimate \eqref{T-alpha-delta} follows from \eqref{l3.2.4} and \eqref{l3.2.5}, with $\overline{M}(\varkappa,\alpha)$ given by \eqref{overline-M}. One can readily check that 
\begin{equation}\label{l3.2.6}
\overline{M}(\varkappa,\alpha)\leq \frac{1}{\pi}\max\Bigg\{\frac{\tM}{\varkappa}+\frac{M_1|\tan(\frac{\tte}{2}+\frac{\pi}{4})|\big(\frac{\ta}{q(\alpha)}+\varkappa\big)}{(1-\varkappa) },\frac{\tM\Big(e(2\tte+\pi)-4\sec(\frac{\tte}{2}+\frac{\pi}{4})\Big)}{4}e^{\frac{\ta}{q(\alpha)}+1}\Bigg\}.
\end{equation}
Since $\inf_{\alpha\geq\delta}q(\alpha)=q(\delta)>0$ for any $\delta>0$, by Hypothesis (H3), from \eqref{T-alpha-delta} and \eqref{l3.2.6} it follows that
for any $\delta>0$ the family of semigroups $\{T_\alpha(t)\}_{t\geq0}$ is exponentially stable uniformly for $\alpha\geq\delta$.
\end{proof}
\begin{remark}\label{r3.3}
In the case when the space $\bX$ is a \textit{Hilbert} space one can use the results of B. Helfer and J. Sj\"{o}strand (\cite{HS1,HS2}) to obtain an estimate similar to \eqref{T-alpha-delta}. Indeed, from Lemma~\ref{l2.3} and Lemma~\ref{l3.1} it follows that
\begin{equation}\label{r3.3.1}
\|T_\alpha(t)\|\leq \frac{\tM\Big(e(2\tte+\pi)-4\sec(\frac{\tte}{2}+\frac{\pi}{4})\Big)}{4\pi}e^{\ta t}\quad\mbox{for any}\quad t>0,\;\alpha>0.
\end{equation}
From Hypothesis (H3)(i) we have
\begin{equation}\label{r3.3.2}
\sup_{{\rm Re}\lambda\geq-\varkappa q(\alpha)}\|R(\lambda,A_\alpha)\|\leq\sup_{{\rm Re}\lambda\geq-\varkappa q(\alpha)}\frac{M_1}{\mathrm{Re}\lambda+q(\alpha)}\leq\frac{M_1}{(1-\varkappa)q(\alpha)}\quad\mbox{for any}\quad\alpha>0,\;\varkappa\in(0,1).
\end{equation}
From \cite[Proposition 2.1]{HS1} we obtain that
\begin{equation}\label{T-alpha-delta-Hilbert}
\|T_\alpha(t)\|\leq \overline{N}(\varkappa,\alpha)e^{-\varkappa q(\alpha)t}\quad\mbox{for any}\quad t>0,\;\alpha>0,
\end{equation}
where the function $\overline{N}:(0,1)\times(0,\infty)\to\RR$ is defined by
\begin{equation}\label{overline-N}
\overline{N}(\varkappa,\alpha)=\frac{\tM\Big(e(2\tte+\pi)-4\sec(\frac{\tte}{2}+\frac{\pi}{4})\Big)}{4\pi}\Bigg(1+\frac{\tM\Big(e(2\tte+\pi)-4\sec(\frac{\tte}{2}+\frac{\pi}{4})\Big)}{2\pi}\Big(1+\frac{M_1(\ta+\varkappa q(\alpha)}{(1-\varkappa)q(\alpha)}\Big)\Bigg).
\end{equation}
Here $\tM$, $\tte$ and $\ta$ are obtained in Lemma~\ref{l3.1} and $M_1$ is taken from Hypothesis (H3) (i). We note that $\overline{N}(\varkappa,\alpha)=\mathcal{O}(\frac{1}{q(\alpha)})$ and  $\overline{M}(\varkappa,\alpha)=\mathcal{O}(\frac{1}{q(\alpha)})$ as $\alpha\to0$, which shows that regardless of what approach is used one can conclude uniform exponential stability of the family of semigroups $\{T_\alpha(t)\}_{t\geq0}$ for $\alpha\geq\delta$, for any $\delta>0$, but not for $\alpha\geq0$. Also, from \eqref{overline-N} we note that $\sup_{\alpha\geq 0}\overline{N}(\varkappa,\alpha)<\infty$ only if $\ta=0$, which occurs only when the family of semigroups $\{T_\alpha(t)\}_{t\geq0}$, $\alpha\geq0$, is bounded uniformly in $\alpha\geq0$, a property which is as difficult to establish as the result of exponential decay of the family of semigroups with uniform in $\alpha$ constants. 
\end{remark}

\subsection{Norm Estimates of $\{T_\alpha(t)\}_{t\geq 0}$ when $\alpha$ is in a neighborhood of $0$.}\label{sec3-2}

To estimate the norm of the semigroup $\{T_\alpha(t)\}_{t\geq 0}$ when $\alpha$ is in a neighborhood of $0$, we need a different approach. The key part of the argument is to find a spectral decomposition of the Banach space $\bX$, into a sum of two closed subspaces invariant under the semigroup $\{T_\alpha(t)\}_{t\geq 0}$, such that the spectrum of the restriction of $A_\alpha$ to the two subspaces is either away from the imaginary axis or made up entirely of eigenvalues.
We first look for such a decomposition of the linear operator $A$. From Hypothesis (H2) we have the spectrum of $A$ has two disjoint parts, separated by the circle $\partial D(0,\frac{\nu}{2})$. We define $P_0\in\mathcal{B}(\bX)$ as the spectral projection relative to the spectral subset $\{0\}$ of $\sigma(A)$, given by the formula
\begin{equation}\label{def-P-0}
P_0=\frac{1}{2\pi\rmi}\int_{\partial D(0,\frac{\nu}{2})}R(\lambda,A)\rmd\lambda.
\end{equation}
\begin{remark}\label{r3.4}
We collect several well-known properties of the spectral projection $P_0$, see e.g., \cite{EN,lunardi,P}:
\begin{enumerate}
\item[(i)] $\mathrm{Im}P_0\subset\dom(A)$;
\item[(ii)] $\mathrm{Im}P_0$ and $\mathrm{Ker}P_0$ are invariant under $A$ and $T(t)$ for any $t\geq 0$;
\item[(iii)] The linear operator $\tA:=A_{|\dom(A)\cap\mathrm{Ker}P_0}$ is the generator of the semigroup $\{\tT(t)\}_{t\geq 0}=\{T(t)_{|\mathrm{Ker}P_0}\}_{t\geq 0}$ on $\mathrm{Ker}P_0$;
\item[(iv)] $\sigma(\tA)=\sigma(A)\setminus\{0\}$.
\end{enumerate}
\end{remark}	
In the next lemma we aim to refine the properties of $P_0$ by making use of Hypotheses (H1)--(H2).
\begin{lemma}\label{l3.5}
Assume Hypotheses (H1)--(H2). Then, the following assertions hold true
\begin{enumerate}
\item[(i)] $\mathrm{dim}\,\mathrm{Im}P_0<\infty$ and $A_{|\mathrm{Im}P_0}\equiv0$;
\item[(ii)] The operator $\tA$ is sectorial, and thus it generates an analytic semigroup. Moreover, there exists $M_2>0$ such that
\begin{equation}\label{est-sem-tilde-A}
\|\tT(t)\|\leq M_2e^{-\frac{7\nu}{8}t}\quad\mbox{for any}\quad t>0;\footnote{ In \eqref{est-sem-tilde-A} we can replace $-\frac{7\nu}{8}$ by $-\varkappa\nu$ for any $\varkappa\in(0,1)$, but not by $-\nu$. However, such a change would not enhance our main result, cf. Theorem~\ref{t1.1}, since for $\alpha$ close to $0$ under Hypothesis (H3) the decay rate $-\frac{7\nu}{8}$ is stronger than $-\varkappa q(\alpha)$, the decay rate in the main result.}
\end{equation}
\item[(iii)]The linear operators $A$ and $T(t)$, $t\geq0$, respectively, have the representations
\begin{equation}\label{representation-A-T}
A=\begin{bmatrix}
0&0\\
0&\tA\end{bmatrix},\quad T(t)=\begin{bmatrix}
I_{\mathrm{Im}P_0}&0\\
0&\tT(t)\end{bmatrix},\; t\geq 0,
\end{equation}
with respect to the decomposition $\bX=\mathrm{Im}P_0\oplus\mathrm{Ker}P_0$ (direct sum, not necessarily orthogonal).
\end{enumerate}	
\end{lemma}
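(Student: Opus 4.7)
Parts (i) and (iii) follow from standard spectral theory. Hypothesis~(H2)(ii) asserts that $0$ is an isolated eigenvalue of $A$ of finite multiplicity, so the Riesz projection $P_0$ defined in \eqref{def-P-0} has finite-dimensional range. Semisimplicity of the eigenvalue $0$ is equivalent to $AP_0=0$; hence for every $x\in\mathrm{Im}\,P_0$, $Ax=AP_0x=0$, which proves (i). For (iii), Remark~\ref{r3.4}(ii) gives the invariance of both summands in $\bX=\mathrm{Im}\,P_0\oplus\mathrm{Ker}\,P_0$ under $A$ and $T(t)$, and the block-diagonal representations \eqref{representation-A-T} follow at once: the top-left blocks are $A_{|\mathrm{Im}\,P_0}=0$ and its generated semigroup $I_{\mathrm{Im}\,P_0}$, while the bottom-right blocks are $\tilde A$ and $\tilde T(t)$ by definition.

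The bulk of the work is (ii). My plan is to verify the half-plane hypothesis of Lemma~\ref{lemma-Lunardi} for $\tilde A$ at $\omega=-\tfrac{7\nu}{8}$: once the bound $\|\lambda R(\lambda,\tilde A)\|\leq N_0$ is established uniformly on $H:=\{\mathrm{Re}\,\lambda\geq-\tfrac{7\nu}{8}\}$, Lemma~\ref{lemma-Lunardi} produces sectoriality of $\tilde A$ with vertex $-\tfrac{7\nu}{8}$ and angle $\tilde\theta=\pi-\arctan(2N_0)\in(\tfrac{\pi}{2},\pi)$, after which Lemma~\ref{l2.3} yields \eqref{est-sem-tilde-A} with $M_2=\sqrt{4N_0^2+1}\,(e\varphi-\sec\varphi)/\pi$ for any fixed $\varphi\in(\tfrac{\pi}{2},\tilde\theta)$. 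That $H\subset\rho(\tilde A)$ follows from Remark~\ref{r3.4}(iv) and Hypothesis~(H2)(i), which together give $\sigma(\tilde A)=\sigma(A)\setminus\{0\}\subset\{\mathrm{Re}\,\lambda\leq-\nu\}$. Moreover, since $P_0$ commutes with $R(\lambda,A)$, the subspace $\mathrm{Ker}\,P_0$ is $R(\lambda,A)$-invariant whenever $\lambda\in\rho(A)$, so $R(\lambda,\tilde A)$ coincides with the restriction of $R(\lambda,A)$ to $\mathrm{Ker}\,P_0$ and $\|R(\lambda,\tilde A)\|\leq\|R(\lambda,A)\|$ there.

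To produce $N_0$, I split $H$ as $H_1\cup H_2$, where $H_1:=H\cap\Omega_{a,\theta}\cap\{|\lambda|\geq 2|a|+1\}$. On $H_1$, Hypothesis~(H1) together with the estimate $|\lambda-a|\geq|\lambda|-|a|\geq|\lambda|/2$ delivers $\|\lambda R(\lambda,\tilde A)\|\leq 2M_0$. The complement $H_2$ is bounded: the wedge $\CC\setminus\Omega_{a,\theta}$ has vertex at $a$ and half-angle $\pi-\theta<\tfrac{\pi}{2}$, so every point of this wedge satisfies $|\lambda-a|\,|\cos\theta|\leq a-\mathrm{Re}\,\lambda$, which bounds $|\lambda-a|$ on $H$; adjoining the bounded disk-intersection $H\cap\{|\lambda|<2|a|+1\}$ preserves boundedness. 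The closure $\overline{H_2}$ is then a compact subset of the open set $H\subset\rho(\tilde A)$, so $\lambda\mapsto\|\lambda R(\lambda,\tilde A)\|$ is bounded on $H_2$ by continuity of the resolvent. Combining these two bounds yields the required $N_0$. The single delicate point is the boundedness of $R(\lambda,\tilde A)$ on $H_2$, which requires staying strictly to the right of $\{\mathrm{Re}\,\lambda=-\nu\}$; this is what dictates the decay rate $-\tfrac{7\nu}{8}$, and, as noted in the footnote to \eqref{est-sem-tilde-A}, the same argument yields any rate $-\varkappa\nu$ with $\varkappa\in(0,1)$.
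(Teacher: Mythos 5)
Your proofs of (i) and (iii) are correct and essentially the paper's: the paper establishes $A_{|\mathrm{Im}P_0}\equiv 0$ by writing $AP_0=\frac{1}{2\pi\rmi}\int_{\partial D(0,\frac{\nu}{2})}\big(\lambda R(\lambda,A)-I_\bX\big)\rmd\lambda$ and invoking the analytic extension of $\lambda\mapsto\lambda R(\lambda,A)$ through $0$ (Remark~\ref{r2.7}), which is just the contour-integral form of your statement that semisimplicity is equivalent to $AP_0=0$.

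For (ii) you take a genuinely different route. The paper's argument is shorter and softer: it observes that $R(\lambda,\tA)=R(\lambda,A)_{|\mathrm{Ker}P_0}$ on $\Omega_{a,\theta}$, so $\tA$ is sectorial \emph{with the same vertex $a$ and angle $\theta$ as $A$}; it then invokes the identity $\omega_0(\tA)=\sup\mathrm{Re}\,\sigma(\tA)\leq-\nu$ valid for analytic semigroups, and simply \emph{defines} $M_2=\sup_{t\geq0}e^{\frac{7\nu}{8}t}\|\tT(t)\|<\infty$. You instead re-derive sectoriality of $\tA$ with the \emph{shifted} vertex $-\tfrac{7\nu}{8}$ by verifying the half-plane hypothesis of Lemma~\ref{lemma-Lunardi}, splitting $\{\mathrm{Re}\,\lambda\geq-\tfrac{7\nu}{8}\}$ into a far region where (H1) gives $\|\lambda R(\lambda,\tA)\|\leq 2M_0$ and a bounded remainder handled by compactness, and then feed the resulting sector into Lemma~\ref{l2.3}. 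Your verification is sound (the wedge $\CC\setminus\Omega_{a,\theta}$ intersected with the half-plane is indeed bounded, and $\overline{H_2}$ sits inside the open set $\rho(\tA)$ — note $H$ itself is closed, not open, but that is immaterial). What your route buys is independence from the spectral-bound-equals-growth-bound theorem for analytic semigroups and a constant $M_2$ expressed through the machinery of Lemmas~\ref{lemma-Lunardi} and~\ref{l2.3}; what it costs is length, and in the end your $N_0$ still contains a compactness supremum over $H_2$, so it is no more effective than the paper's $\sup_{t\geq0}e^{\frac{7\nu}{8}t}\|\tT(t)\|$. Both arguments are correct.
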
	
\begin{proof}
	(i) Since $0$ is a semisimple eigenvalue of finite multiplicity of $A$ we infer that $\mathrm{dim}\,\mathrm{Im}P_0<\infty$. Moreover, from Remark~\ref{r3.4}(i) and since $A$ is a closed linear operator we have 
	\begin{equation}\label{l3.5.1}
	AP_0=\frac{1}{2\pi\rmi}\int_{\partial D(0,\frac{\nu}{2})}AR(\lambda,A)\rmd\lambda=\frac{1}{2\pi\rmi}\int_{\partial D(0,\frac{\nu}{2})}\big(\lambda R(\lambda,A)-I_{\bX}\big)\rmd\lambda.
	\end{equation}
	By Remark~\ref{r2.7} we know that the function $\lambda\to\lambda R(\lambda,A):\Omega_{a,\theta}\cup\{\lambda\in\CC:\mathrm{Re}\lambda>-\nu\}\setminus\{0\}\to\mathcal{B}(\bX)$
	can be extended analytically to $\Omega_{a,\theta}\cup\{\lambda\in\CC:\mathrm{Re}\lambda>-\nu\}$, therefore from \eqref{l3.5.1} we obtain that $AP_0\equiv0$, hence $A_{|\mathrm{Im}P_0}\equiv0$.
	
\noindent (ii) From Remark~\ref{r3.4}(iv) and Hypothesis (H1) we have $\Omega_{a,\theta}\subset\rho(A)\subset\rho(\tA)$ and $R(\lambda,\tA)=R(\lambda,A)_{|\mathrm{Ker}P_0}$ for any $\lambda\in\Omega_{a,\theta}$, hence
\begin{equation}\label{l3.5.2}
\|R(\lambda,\tA)\|\leq\|R(\lambda,A)\|\leq\frac{M}{|\lambda-a|}\quad\mbox{for any}\quad\lambda\in\Omega_{a,\theta}.
\end{equation}
We conclude that the linear operator $\tA$ is sectorial, therefore the semigroup $\{\tT(t)\}_{t\geq 0}$ is analytic. From Hypothesis (H2)(i) it follows that
\begin{equation}\label{l3.5.3}
\omega_0(\tA)=\sup\mathrm{Re}\sigma(\tA)=\sup\mathrm{Re}\big(\sigma(A)\setminus\{0\}\big)\leq-\nu<-\frac{7\nu}{8}.
\end{equation}
From the definition of the growth rate of a semigroup one immediately concludes that
\begin{equation}\label{l3.5.4}
M_2=\sup_{t\geq 0}\big(e^{\frac{7\nu}{8}t}\|\tT(t)\|\big)<\infty,
\end{equation}
proving (ii). Assertion (iii) follows shortly from (i) and Remark~\ref{r3.4}(ii) and (iii).
\end{proof}
To construct a representation of $A_\alpha$ similar to \eqref{representation-A-T} for the case when $\alpha$ is in the neighborhood of $0$, we utilize the standard transformation/conjugation operators, see \cite[Chapter II, Section 4.2]{Kato} and  \cite[Chapter 4, Section 1]{DK}. First, we need to show that 
the circle $\partial D(0,\frac{\nu}{2})$ separates $\sigma(A_\alpha)$ into two disjoint spectral subsets for any $\alpha$ in a neighborhood of $0$. For the remainder of the section, we need to assume Hypothesis (H4) in order to ensure that all the spectral and semigroup bounds are $\alpha$-independent. 
Moreover, we recal that from \eqref{representationE0} and Hypothesis (H3) we can immediately infer that $E_0$ is continuous on $(0,\infty)$ in $\mathcal{B}(\bX)$.  
\begin{lemma}\label{new-l3.6}
Assume Hypotheses (H1)--(H4). Then, 
\begin{equation}\label{def-calE-nu}
\cE_\nu:=\Big\{\lambda\in\CC:\mathrm{Re}\lambda\geq-\frac{3\nu}{4},\;|\lambda|\geq\frac{\nu}{4}\Big\}\subset\rho(A_\alpha)\;
\mbox{and}\; \|R(\lambda,A_\alpha)\|\leq\frac{16M_2}{\nu}
\end{equation}
for any $\lambda\in\cE_\nu,\,\alpha\in [0,\eps_0]$, where
\begin{equation}\label{def-eps-0}
\eps_0:=\min\Big\{\frac{\nu}{16M_2\sup_{\alpha\in[0,1]}\|E_0(\alpha)\|+1},1\Big\}
\end{equation}
and $M_2$ is defined in \eqref{l3.5.4}.
\end{lemma}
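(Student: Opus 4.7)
The approach is to pass through the spectral decomposition $\bX = \mathrm{Im}P_0 \oplus \mathrm{Ker}P_0$ from Lemma~\ref{l3.5}, secure a uniform resolvent bound for the unperturbed operator $A$ on $\cE_\nu$ from the two block pieces, and then extract $R(\lambda, A_\alpha)$ from $R(\lambda, A)$ by a Neumann series whose convergence is exactly what the definition of $\eps_0$ is engineered to guarantee.

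For the first step, fix $\lambda \in \cE_\nu$ and use the block representation \eqref{representation-A-T}. On the finite-dimensional summand $\mathrm{Im}P_0$ the operator $A$ is zero by Lemma~\ref{l3.5}(i), so $\lambda I_{\bX} - A$ restricts to the scalar $\lambda$, which is invertible with $|\lambda^{-1}| \leq 4/\nu$ because $|\lambda| \geq \nu/4$. On $\mathrm{Ker}P_0$, Lemma~\ref{l3.5}(ii) gives $\|\tT(t)\| \leq M_2 e^{-7\nu t/8}$, so the Laplace transform
\begin{equation*}
R(\lambda,\tA) = \int_0^\infty e^{-\lambda t}\,\tT(t)\,\rmd t
\end{equation*}
converges absolutely whenever $\mathrm{Re}\,\lambda > -7\nu/8$, and in particular throughout $\cE_\nu$, yielding
\begin{equation*}
\|R(\lambda,\tA)\| \leq \frac{M_2}{\mathrm{Re}\,\lambda + 7\nu/8} \leq \frac{8 M_2}{\nu}.
\end{equation*}
Reassembling $R(\lambda,A) = \lambda^{-1} P_0 + R(\lambda,\tA)(I_{\bX} - P_0)$, obtained from the block form for $\mathrm{Re}\,\lambda > 0$ and extended by analyticity into $\cE_\nu$, produces a bound of the shape $\|R(\lambda,A)\| \leq 8M_2/\nu$.

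The second step passes from $A$ to $A_\alpha = A + E(\alpha)$ via the factorization
\begin{equation*}
\lambda I_{\bX} - A_\alpha = \bigl(I_{\bX} - E(\alpha) R(\lambda,A)\bigr)(\lambda I_{\bX} - A).
\end{equation*}
Hypothesis (H4), read as $E(\alpha) = \alpha E_0(\alpha)$ with $\sup_{\beta \in [0,1]} \|E_0(\beta)\| < \infty$, lets me estimate, for $\alpha \in [0,\eps_0]$ and $\lambda \in \cE_\nu$,
\begin{equation*}
\|E(\alpha) R(\lambda,A)\| \leq \alpha \sup_{\beta \in [0,1]} \|E_0(\beta)\| \cdot \frac{8M_2}{\nu} \leq \frac{1}{2},
\end{equation*}
the final inequality being precisely what the choice of $\eps_0$ in \eqref{def-eps-0} forces. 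A Neumann series inverts $I_{\bX} - E(\alpha) R(\lambda,A)$ with operator norm at most $2$, whence $\lambda \in \rho(A_\alpha)$ and $\|R(\lambda,A_\alpha)\| \leq 2 \cdot 8M_2/\nu = 16M_2/\nu$.

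The main technical subtlety is the careful accounting of the projection norms $\|P_0\|$ and $\|I_{\bX} - P_0\|$ when combining the two block resolvents into the single constant $8M_2/\nu$; once that is handled consistently with the normalization implicit in $M_2$, everything else reduces to routine manipulation with the Laplace transform and the Neumann series.
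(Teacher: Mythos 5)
Your proposal is correct and follows essentially the same route as the paper: bound $R(\lambda,A)$ on $\cE_\nu$ via the block decomposition $\frac{1}{\lambda}I_{\mathrm{Im}P_0}\oplus R(\lambda,\tA)$ together with the semigroup estimate $\|\tT(t)\|\leq M_2e^{-\frac{7\nu}{8}t}$, then factor $\lambda I_\bX-A_\alpha=\big(I_\bX-E(\alpha)R(\lambda,A)\big)(\lambda I_\bX-A)$ and invert by a Neumann series using the definition of $\eps_0$ to force $\|E(\alpha)R(\lambda,A)\|\leq\frac12$. The "projection-norm accounting" you flag is handled no more carefully in the paper, which simply takes the maximum of the two block bounds, so your argument matches the published one in both structure and level of rigor.
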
	
\begin{proof}
First, we collect some properties of the linear operator $\tA$. From Remark~\ref{r3.4}(iv) and Lemma~\ref{l3.5}(ii), respectively, we have 
\begin{equation}\label{new-l3.5.1}
\sigma(\tA)\subset \{\lambda\in\CC:\mathrm{Re}\lambda\leq-\nu\}\;\mbox{and}\;\|R(\lambda,\tA)\|\leq\frac{M_2}{\mathrm{Re}\lambda+\frac{7\nu}{8}}\;\;\mbox{whenever}\;\mathrm{Re}\lambda>-\frac{7\nu}{8}.
\end{equation}
In addition, from Lemma~\ref{l3.5}(iii) it follows that the resolvent operator of $A$ has the representation
\begin{equation}\label{new-l3.5.2}
R(\lambda,A)=\begin{bmatrix}
\frac{1}{\lambda}I_{\mathrm{Im}P_0}&0\\
0&R(\lambda,\tA)\end{bmatrix}\;\;\mbox{whenever}\;\mathrm{Re}\lambda>-\nu,\;\lambda\ne0.
\end{equation}
with respect to the decomposition $\bX=\mathrm{Im}P_0\oplus\mathrm{Ker}P_0$. 
From \eqref{new-l3.5.1} and \eqref{new-l3.5.2} it follows that 
\begin{equation}\label{new-l3.5.4}
\|R(\lambda,A)\|\leq\max\Big\{\frac{1}{|\lambda|},\frac{M_2}{\mathrm{Re}\lambda+\frac{7\nu}{8}}\Big\}\leq\max\Big\{\frac{4}{\nu},\frac{8M_2}{\nu}\Big\}=\frac{8M_2}{\nu}\;\mbox{for any}\;\lambda\in\cE_\nu.
\end{equation}
Fix $\lambda\in \cE_\nu$ and $\alpha\in[0,\eps_0]$, where $\eps_0$ is defined in \eqref{def-eps-0}. Then,
\begin{equation}\label{new-l3.5.5}
\lambda I_{\bX}-A_\alpha=\lambda I_{\bX}-A-E(\alpha)=\Big(I_{\bX}-E(\alpha)R(\lambda,A)\Big)(\lambda I_{\bX}-A).
\end{equation}
Moreover, from \eqref{representationE0} and \eqref{new-l3.5.4} we obtain that 
\begin{equation}\label{l3.9.6}
\|E(\alpha)R(\lambda,A)\|\leq \frac{8M_2\alpha}{\nu}\|E_0(\alpha)\|\leq\frac{8M_2\eps_0}{\nu}\sup_{\alpha\in[0,\eps_0]}\|E_0(\alpha)\|\leq\frac{1}{2},
\end{equation}
which implies that $I_{\bX}-E(\alpha)R(\lambda,A)$ is invertible and $\Big\|\big(I_{\bX}-E(\alpha)R(\lambda,A)\big)^{-1}\Big\|\leq 2$. From \eqref{new-l3.5.4} and \eqref{new-l3.5.5} we infer that 
\begin{equation*}
\cE_\nu\subset\rho(A_\alpha)\;\mbox{and}\;\|R(\lambda,A_\alpha)\|\leq\frac{16M_2}{\nu}\;\mbox{for any}\;\lambda\in\cE_\nu,\;\alpha\in[0,\eps_0],
\end{equation*}
proving the lemma.
\end{proof}
Next, we note that assertion \eqref{def-calE-nu} allows us to define the spectral projection
\begin{equation}\label{def-P-alpha}
P_\alpha=\frac{1}{2\pi\rmi}\int_{\partial D(0,\frac{\nu}{2})}R(\lambda,A_\alpha)\rmd\lambda, \quad \alpha\in [0,\eps_0].
\end{equation} 
\begin{remark}\label{r3.6}
Clearly, $P_\alpha$ satisfies the following properties:
\begin{enumerate}
\item[(i)] $\mathrm{Im}P_\alpha$ and $\mathrm{Ker}P_\alpha$ are invariant under $A_\alpha$ and $T_\alpha(t)$ for any $t\geq 0$, $\alpha\in [0,\eps_0]$;
\item[(ii)] $\mathrm{Im}P_\alpha\subset\dom(A)$ for any $\alpha\in [0,\eps_0]$;
\item[(iii)] The linear operator $\tA_\alpha:=(A_\alpha)_{|\dom(A)\cap\mathrm{Ker}P_\alpha}$ is the generator of the semigroup $\{\tT_\alpha(t)\}_{t\geq 0}=\{T(t)_{|\mathrm{Ker}P_\alpha}\}_{t\geq 0}$ on $\mathrm{Ker}P_\alpha$ for any $\alpha\in [0,\eps_0]$;
\item[(iv)] $\sigma(\tA_\alpha)=\sigma(A_\alpha)\setminus D(0,\frac{\nu}{2})$ for any $\alpha\in [0,\eps_0]$.
\end{enumerate}
\end{remark}		
In the next lemma we show that the family of spectral projections $\{P_\alpha\}_{\alpha\in[0,\eps_0]}$ is continuous in the operator norm.
\begin{lemma}\label{new-l3.8}
Assume Hypotheses (H1)--(H4). Then, the function
$\alpha\rightarrow P_\alpha:[0,\eps_0]\to\mathcal{B}(\bX)$ is continuous in the operator norm. Moreover, the following estimate holds
\begin{equation}\label{estimate-P-alpha}
\|P_{\alpha_1}-P_{\alpha_2}\|\leq\frac{128M_2^2}{\nu}\|E(\alpha_1)-E(\alpha_2)\|\quad\mbox{for any}\quad\alpha_1,\alpha_2\in[0,\eps_0].
\end{equation}
\end{lemma}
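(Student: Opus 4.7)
The plan is to combine the second resolvent identity with the contour integral representation \eqref{def-P-alpha} for the spectral projections, together with the uniform resolvent bound from Lemma~\ref{new-l3.6}. All the pieces are already in place; the proof reduces to a direct integral estimate.

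First, I would fix $\alpha_1,\alpha_2\in[0,\eps_0]$ and note that $A_{\alpha_1}-A_{\alpha_2}=E(\alpha_1)-E(\alpha_2)$ since $A_{\alpha_i}=A+E(\alpha_i)$. By Lemma~\ref{new-l3.6} the circle $\partial D(0,\frac{\nu}{2})$ is contained in $\cE_\nu\subset\rho(A_{\alpha_1})\cap\rho(A_{\alpha_2})$, so the second resolvent identity applies pointwise on this contour, giving
\begin{equation*}
R(\lambda,A_{\alpha_1})-R(\lambda,A_{\alpha_2})=R(\lambda,A_{\alpha_2})\bigl(E(\alpha_1)-E(\alpha_2)\bigr)R(\lambda,A_{\alpha_1}),\quad \lambda\in\partial D(0,\tfrac{\nu}{2}).
\end{equation*}

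Next, using \eqref{def-P-alpha} for both $\alpha_1$ and $\alpha_2$, and taking the difference, I would write
\begin{equation*}
P_{\alpha_1}-P_{\alpha_2}=\frac{1}{2\pi\rmi}\int_{\partial D(0,\frac{\nu}{2})}R(\lambda,A_{\alpha_2})\bigl(E(\alpha_1)-E(\alpha_2)\bigr)R(\lambda,A_{\alpha_1})\,\rmd\lambda.
\end{equation*}
Estimating the integrand by the submultiplicative property of the operator norm, invoking the bound $\|R(\lambda,A_{\alpha_i})\|\leq\frac{16M_2}{\nu}$ from Lemma~\ref{new-l3.6} (valid since $\partial D(0,\frac{\nu}{2})\subset\cE_\nu$ and $\alpha_i\in[0,\eps_0]$), and using that the length of the contour is $\pi\nu$, I would obtain
\begin{equation*}
\|P_{\alpha_1}-P_{\alpha_2}\|\leq\frac{1}{2\pi}\cdot\pi\nu\cdot\Big(\frac{16M_2}{\nu}\Big)^2\|E(\alpha_1)-E(\alpha_2)\|=\frac{128M_2^2}{\nu}\|E(\alpha_1)-E(\alpha_2)\|,
\end{equation*}
which is exactly \eqref{estimate-P-alpha}.

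Finally, continuity of $\alpha\mapsto P_\alpha$ on $[0,\eps_0]$ in the operator norm is immediate from \eqref{estimate-P-alpha} and the continuity of $E:[0,\infty)\to\mathcal{B}(\bX)$ assumed in Hypothesis (H3). There is no real obstacle here; the only subtlety is ensuring that the contour $\partial D(0,\frac{\nu}{2})$ lies uniformly in the resolvent set of $A_\alpha$ for all $\alpha\in[0,\eps_0]$ with a uniform resolvent bound, which is precisely the content of Lemma~\ref{new-l3.6} and the reason Hypothesis (H4) is needed at this stage.
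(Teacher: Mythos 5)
Your proposal is correct and follows essentially the same route as the paper: the second resolvent identity on the contour $\partial D(0,\frac{\nu}{2})$, the uniform bound $\|R(\lambda,A_{\alpha_i})\|\leq\frac{16M_2}{\nu}$ from Lemma~\ref{new-l3.6}, the contour length $\pi\nu$, and continuity of $E$ from Hypothesis (H3). The constant $\frac{128M_2^2}{\nu}$ comes out exactly as in the paper.
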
			
\begin{proof}
Since $A_\alpha=A+E(\alpha)$ for any $\alpha\geq 0$, from \eqref{def-calE-nu} we immediately infer that 
\begin{equation}\label{new-l3.8.1}
R(\lambda,A_{\alpha_1})-R(\lambda,A_{\alpha_2})=R(\lambda,A_{\alpha_1})\big(E(\alpha_1)-E(\alpha_2)\big)R(\lambda,A_{\alpha_2})
\end{equation}
for any $\lambda\in\cE_\nu$, $\alpha_1,\alpha_2\in[0,\eps_0]$. From \eqref{def-calE-nu}, \eqref{def-P-alpha} and \eqref{new-l3.8.1} it follows that
\begin{align}\label{new-l3.8.2}
\|P_{\alpha_1}-P_{\alpha_2}\|&=\frac{1}{2\pi}\Big\|\int_{\partial D(0,\frac{\nu}{2})}R(\lambda,A_{\alpha_1})\big(E(\alpha_1)-E(\alpha_2)\big)R(\lambda,A_{\alpha_2})\rmd\lambda\Big\|\nonumber\\
&\leq\frac{\mathrm{length}\big(\partial D(0,\frac{\nu}{2})\big)}{2\pi}\sup_{|\lambda|=\frac{\nu}{2}}\|R(\lambda,A_{\alpha_1})\|\,\|E(\alpha_1)-E(\alpha_2)\|\,\sup_{|\lambda|=\frac{\nu}{2}}\|R(\lambda,A_{\alpha_2})\|\nonumber\\
&\leq\frac{128M_2^2}{\nu}\|E(\alpha_1)-E(\alpha_2)\|\quad\mbox{for any}\quad\alpha_1,\alpha_2\in[0,\eps_0].
\end{align}
The lemma follows shortly from \eqref{new-l3.8.2} and since $E$ is continuous on $[0,\infty)$ in $\mathcal{B}(\bX)$.
\end{proof}		
Following \cite[Chapter 4]{DK} we introduce the operator transformation/conjugation function associated to the family of projections $\{P_\alpha\}_{\alpha\in[0,\eps_0]}$ as follows:
\begin{equation}\label{def-U}
U:[0,\eps_0]\to\mathcal{B}(\bX),\quad U(\alpha)=P_\alpha P_0+(I_\bX-P_\alpha)(I_\bX-P_0).
\end{equation}
The properties of the operator transformation/conjugation function $U$ defined above are collected in the following lemma.
\begin{lemma}\label{new-l3.9}
	Assume Hypotheses (H1)--(H4). Then, the following assertions hold true
\begin{enumerate}
\item[(i)] The function $U$ is continuous on $[0,\eps_0]$ in the $\mathcal{B}(\bX)$ operator norm;
\item[(ii)] $U(\alpha)$  is an invertible operator for any $\alpha\in[0,\eps_1]$, where
\begin{equation}\label{def-eps-1}
\eps_1:=\min\Big\{\frac{\nu}{256M_2^2(16M_2+1)\sup_{\alpha\in[0,1]}\|E_0(\alpha)\|+1},1\Big\}
\end{equation}
and $M_2$ is defined in \eqref{l3.5.4};
\item[(iii)] The function $U(\cdot)^{-1}$ is continuous on $[0,\eps_1]$ in the $\mathcal{B}(\bX)$ operator norm;
\item[(iv)] $P_\alpha=U(\alpha)P_0U(\alpha)^{-1}$ for any $\alpha\in[0,\eps_1]$.  	
\end{enumerate}
\end{lemma}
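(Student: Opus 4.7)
The plan is to verify the four assertions in order, relying on the algebraic structure of $U(\alpha)$ together with the resolvent bound of Lemma \ref{new-l3.6} and the Lipschitz bound of Lemma \ref{new-l3.8}.

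Part (i) is essentially automatic. Since $U(\alpha)$ is a finite polynomial expression in the commuting variables $P_\alpha$ and $P_0$ (with $P_0$ fixed), writing $U(\alpha_1) - U(\alpha_2) = (P_{\alpha_1} - P_{\alpha_2})P_0 - (P_{\alpha_1} - P_{\alpha_2})(I_\bX - P_0) = (P_{\alpha_1} - P_{\alpha_2})(2P_0 - I_\bX)$ gives continuity of $U$ on $[0,\eps_0]$ immediately from Lemma \ref{new-l3.8}.

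The substance of the lemma is (ii). The key is the algebraic identity
\begin{equation*}
U(\alpha) - I_\bX = (I_\bX - 2P_\alpha)(P_\alpha - P_0),
\end{equation*}
obtained by expanding $U(\alpha) = P_\alpha P_0 + (I_\bX - P_\alpha)(I_\bX - P_0)$, using $P_\alpha^2 = P_\alpha$ and $P_0^2 = P_0$, and simplifying $P_\alpha(I - P_0) = P_\alpha(P_\alpha - P_0)$ together with $(I - P_\alpha)P_0 = -(I - P_\alpha)(P_\alpha - P_0)$. I would then estimate each factor separately. For the first factor, the contour integral representation \eqref{def-P-alpha} and the uniform resolvent bound $\|R(\lambda,A_\alpha)\|\le 16M_2/\nu$ from Lemma \ref{new-l3.6} yield $\|P_\alpha\| \le \tfrac{1}{2\pi}\cdot\pi\nu\cdot\tfrac{16M_2}{\nu} = 8M_2$, so $\|I_\bX - 2P_\alpha\| \le 16M_2 + 1$. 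For the second factor, Lemma \ref{new-l3.8} combined with $E(\alpha) = \alpha E_0(\alpha)$ and $E(0) = 0$ gives $\|P_\alpha - P_0\| \le \tfrac{128 M_2^2 \alpha \|E_0(\alpha)\|}{\nu}$. Multiplying and taking $\alpha \le \eps_1$ as in \eqref{def-eps-1} yields $\|U(\alpha) - I_\bX\| \le \tfrac{1}{2}$, so the Neumann series produces $U(\alpha)^{-1}$ with $\|U(\alpha)^{-1}\| \le 2$.

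Parts (iii) and (iv) are then short. For (iii), the standard resolvent-style identity $U(\alpha_1)^{-1} - U(\alpha_2)^{-1} = U(\alpha_1)^{-1}(U(\alpha_2) - U(\alpha_1))U(\alpha_2)^{-1}$, together with the uniform bound $\|U(\alpha)^{-1}\| \le 2$ on $[0,\eps_1]$ and part (i), gives continuity of $U(\cdot)^{-1}$. For (iv), a direct computation using $P_\alpha(I_\bX - P_\alpha) = 0$ and $(I_\bX - P_0)P_0 = 0$ gives
\begin{equation*}
P_\alpha U(\alpha) = P_\alpha^2 P_0 + P_\alpha(I_\bX - P_\alpha)(I_\bX - P_0) = P_\alpha P_0,
\end{equation*}
and symmetrically $U(\alpha)P_0 = P_\alpha P_0^2 + (I_\bX - P_\alpha)(I_\bX - P_0)P_0 = P_\alpha P_0$, so $P_\alpha U(\alpha) = U(\alpha) P_0$. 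Composing with $U(\alpha)^{-1}$ finishes (iv).

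The only subtle point is choosing the factorization in (ii) so that the bounded factor involves $P_\alpha$ (controlled via Lemma \ref{new-l3.6}) rather than $P_0$; this is what produces the coefficient $16M_2 + 1$ appearing in \eqref{def-eps-1}. After that, everything is routine algebra and Neumann series.
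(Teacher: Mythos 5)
Your proof is correct and follows essentially the same route as the paper: the same first-order identity for $U(\alpha)-I_\bX$ (you factor it as $(I_\bX-2P_\alpha)(P_\alpha-P_0)$, the paper as $(P_\alpha-P_0)(2P_0-I_\bX)$ --- both equal $2P_\alpha P_0-P_\alpha-P_0$ and give identical constants), the same Neumann-series inversion with $\|U(\alpha)-I_\bX\|\le\tfrac12$ on $[0,\eps_1]$, and the same intertwining computation $P_\alpha U(\alpha)=P_\alpha P_0=U(\alpha)P_0$ for (iv). Your closing remark that the bounded factor must involve $P_\alpha$ rather than $P_0$ is not actually a constraint: $\|P_0\|\le 8M_2$ follows from the same contour estimate at $\alpha=0$, which is precisely what the paper uses.
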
	
\begin{proof}
Assertion (i) follows immediately from \eqref{def-U} and Lemma~\ref{new-l3.8}.

\noindent (ii) Since $P_0$ is a projection one can readily check that 
\begin{align}\label{new-l3.9.1}
U(\alpha)&=I_\bX+P_\alpha P_0+(I_\bX-P_\alpha)(I_\bX-P_0)-P_0^2-(I_\bX-P_0)^2\nonumber\\
&=I_\bX+(P_\alpha-P_0)P_0+\Big(  (I_\bX-P_\alpha)-(I_\bX-P_0)\Big)(I_\bX-P_0)\nonumber\\
&=I_\bX+(P_\alpha-P_0)(2P_0-I_\bX)\;\mbox{for any}\; \alpha\in[0,\eps_0].
\end{align}
Moreover, from \eqref{def-P-0} and Lemma~\ref{new-l3.6} we have
\begin{equation}\label{new-l3.9.2}
\|P_{\alpha}\|=\frac{1}{2\pi}\Big\|\int_{\partial D(0,\frac{\nu}{2})}R(\lambda,A_{\alpha})\rmd\lambda\Big\|\leq\frac{\mathrm{length}\big(\partial D(0,\frac{\nu}{2})\big)}{2\pi}\sup_{|\lambda|=\frac{\nu}{2}}\|R(\lambda,A_{\alpha})\|\leq 8M_2
\end{equation}
for any $\alpha\in[0,\eps_0]$. Since $E(0)=0$ and $\eps_1\leq 1$ from \eqref{representationE0}, \eqref{estimate-P-alpha} and \eqref{new-l3.9.2} we obtain that 
\begin{equation}\label{new-l3.9.3}
\|(P_{\alpha}-P_0)(2P_0-I_\bX)\|\leq\frac{128M_2^2}{\nu}(16M_2+1)\|E(\alpha)\|\leq 
\alpha\frac{128M_2^2(16M_2+1)}{\nu}\sup_{\alpha\in[0,1]}\|E_0(\alpha)\|\leq\frac{1}{2}
\end{equation}
for any $\alpha\in [0,\eps_1]$. From \eqref{new-l3.9.1} and \eqref{new-l3.9.3} we obtain that $U(\alpha)$ is invertible for any $\alpha\in[0,\eps_0]$. Moreover,
\begin{equation}\label{new-l3.9.4}
\|U(\alpha)\|\leq \frac{3}{2},\quad\|U(\alpha)^{-1}\|\leq 2\quad\mbox{for any}\quad \alpha\in[0,\eps_1].
\end{equation}
(iii) From \eqref{new-l3.9.4} we immediately infer that 
\begin{equation}\label{new-l3.9.5}
\|U(\alpha_1)^{-1}-U(\alpha_2)^{-1}\|\leq \|U(\alpha_1)^{-1}\|\,\|U(\alpha_1)-U(\alpha_2)\|\,\|U(\alpha_2)^{-1}\|\leq 4\|U(\alpha_1)-U(\alpha_2)\|
\end{equation}
for any $\alpha_1,\alpha_2\in[0,\eps_1]$. Assertion (iii) follows from (i) and \eqref{new-l3.9.5}.

\noindent (iv) Since $P_\alpha$ is a projection for any $\alpha\in [0,\eps_1]$ from \eqref{def-U} we have  
\begin{equation}\label{new-l3.9.6}
U(\alpha)P_0=P_\alpha P_0=P_\alpha U(\alpha)\quad\mbox{for any}\quad \alpha\in[0,\eps_1].
\end{equation}
Assertion (iv) follows from (iii) and \eqref{new-l3.9.6}.
\end{proof}
When $\bX$ is a Hilbert space, the operator transformation/conjugation function $U$ can be chosen such that $U(\alpha)$ is an unitary operator, see \cite[Formula (4.18), page 102]{Kato}. In this case the estimates \eqref{new-l3.9.4} are simpler. However, since our goal is to find uniform bounds, not necessarily optimal bounds, not having $U(\alpha)$ and its inverse of norm $1$ is not an inconvenience.

We introduce the linear operators $B_\alpha:\dom(B_\alpha)\subseteq\bX\to\bX$, $\alpha\in[0,\eps_1]$ by
\begin{equation}\label{def-B-alpha}
\dom(B_\alpha)=U(\alpha)^{-1}\dom(A),\quad B_\alpha=U(\alpha)^{-1}A_\alpha U(\alpha),\;\alpha\in[0,\eps_1].
\end{equation}
Next, we collect some of the properties of the family of linear operators $B_\alpha$, $\alpha\in[0,\eps_1]$.
\begin{remark}\label{r3.7}
Since $U(\alpha)$ is invertible by Lemma~\ref{new-l3.9}(ii) for any $\alpha\in[0,\eps_1]$ from \eqref{def-B-alpha} we infer that 
\begin{equation}\label{r3.7.1}
\sigma(B_\alpha)=\sigma(A_\alpha),\quad\sigma_{\mathrm{disc}}(B_\alpha)=\sigma_{\mathrm{disc}}(A_\alpha)\quad\mbox{for any}\quad\alpha\in[0,\eps_1].
\end{equation}
Therefore, the circle $\partial D(0,\frac{\nu}{2})$ separates $\sigma(B_\alpha)$ into two disjoint spectral subsets for any $\alpha\in[0,\eps_1]$. 
Moreover, from \eqref{def-B-alpha} we have 
\begin{equation}\label{r3.7.3}	R(\lambda,B_\alpha)=U(\alpha)^{-1}R(\lambda,A_\alpha)U(\alpha)\quad\mbox{for any}\quad\lambda\in\rho(A_\alpha)=\rho(B_\alpha),\;\alpha\in[0,\eps_1].	\end{equation}
From Lemma~\ref{l3.1}, \eqref{r3.7.1} and \eqref{r3.7.3} we obtain 
\begin{equation}\label{r3.7.4}
\Omega_{\ta,\tte}\subseteq\rho(B_\alpha),\quad \|R(\lambda,B_\alpha)\|\leq\|U(\alpha)^{-1}\|\,\|R(\lambda,A_\alpha)\|\,\|U(\alpha)\|\leq \frac{3\tM}{|\lambda-\ta|}
\end{equation}
for any $\lambda\in\Omega_{\ta,\tte}$, $\alpha\in[0,\eps_1]$.
\end{remark}
\begin{lemma}\label{l3.8}
Assume Hypotheses (H1)--(H4). Then, the following assertions hold true:
\begin{enumerate}
\item[(i)] $P_0$ is the spectral projection of $B_\alpha$ associated to the spectral subset of $\sigma(B_\alpha)$ contained in $D(0,\frac{\nu}{2})$ for any $\alpha\in[0,\eps_1]$.
\item[(ii)] $\mathrm{Im}\,P_0\subseteq\dom(B_\alpha)$ for any $\alpha\in[0,\eps_1]$.
\item[(iii)] $\mathrm{Im}\,P_0$ and $\mathrm{Ker}\,P_0$ are invariant under $B_\alpha$ for any $\alpha\in[0,\eps_1]$.
\end{enumerate}	
\end{lemma}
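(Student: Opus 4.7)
The proof will exploit the fact that $B_\alpha$ is obtained from $A_\alpha$ by the similarity transformation $U(\alpha)$, and that by Lemma~\ref{new-l3.9}(iv) this similarity carries $P_\alpha$ to $P_0$. More precisely, from $P_\alpha = U(\alpha) P_0 U(\alpha)^{-1}$ we obtain the two intertwining identities
\begin{equation*}
U(\alpha)P_0 = P_\alpha U(\alpha), \qquad U(\alpha)^{-1}P_\alpha = P_0 U(\alpha)^{-1},\quad\alpha\in[0,\eps_1].
\end{equation*}
All three assertions should then follow by transferring the corresponding properties of $P_\alpha$ relative to $A_\alpha$ (collected in Remark~\ref{r3.6}) through these intertwining relations.

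For (i), I would plug the resolvent identity \eqref{r3.7.3} into the spectral projection formula for $B_\alpha$ along $\partial D(0,\frac{\nu}{2})$. Since $U(\alpha)$ and $U(\alpha)^{-1}$ are bounded operators independent of $\lambda$, they come outside the contour integral, yielding
\begin{equation*}
\frac{1}{2\pi\rmi}\int_{\partial D(0,\frac{\nu}{2})}R(\lambda,B_\alpha)\,\rmd\lambda = U(\alpha)^{-1}\Bigl(\tfrac{1}{2\pi\rmi}\int_{\partial D(0,\frac{\nu}{2})}R(\lambda,A_\alpha)\,\rmd\lambda\Bigr)U(\alpha) = U(\alpha)^{-1}P_\alpha U(\alpha) = P_0,
\end{equation*}
where the last equality uses Lemma~\ref{new-l3.9}(iv). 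The spectral decomposition $\sigma(B_\alpha)=\sigma(A_\alpha)$ coming from \eqref{r3.7.1} ensures that $\partial D(0,\frac{\nu}{2})$ separates $\sigma(B_\alpha)$ the same way it separates $\sigma(A_\alpha)$ (thanks to Lemma~\ref{new-l3.6}), so this is indeed the spectral projection onto the part of $\sigma(B_\alpha)$ inside $D(0,\frac{\nu}{2})$.

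For (ii), pick $x\in\mathrm{Im}\,P_0$. Using the first intertwining relation, $U(\alpha)x = U(\alpha)P_0x = P_\alpha U(\alpha)x \in \mathrm{Im}\,P_\alpha \subseteq \dom(A)$ by Remark~\ref{r3.6}(ii). Hence $x = U(\alpha)^{-1}U(\alpha)x \in U(\alpha)^{-1}\dom(A) = \dom(B_\alpha)$. For (iii), given $x\in\mathrm{Im}\,P_0$, the argument just given places $U(\alpha)x$ in $\mathrm{Im}\,P_\alpha$; by Remark~\ref{r3.6}(i) $A_\alpha U(\alpha)x \in \mathrm{Im}\,P_\alpha$, and therefore
\begin{equation*}
B_\alpha x = U(\alpha)^{-1}A_\alpha U(\alpha)x \in U(\alpha)^{-1}\mathrm{Im}\,P_\alpha = \mathrm{Im}\,P_0,
\end{equation*}
the last equality following from the second intertwining identity applied to any preimage. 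The same reasoning with $I_{\bX}-P_0$ in place of $P_0$ (noting that $U(\alpha)(I_{\bX}-P_0) = (I_{\bX}-P_\alpha)U(\alpha)$) handles invariance of $\mathrm{Ker}\,P_0$ under $B_\alpha$, using invariance of $\mathrm{Ker}\,P_\alpha$ under $A_\alpha$ from Remark~\ref{r3.6}(i).

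I do not expect a serious obstacle here: once the intertwining relations are written down, everything is bookkeeping. The only subtle point is the contour integral manipulation in (i), where one must be careful that the similarity can be taken outside the integral — this is immediate from uniform norm convergence of the Riemann sums defining the Dunford integral, together with the fact that $U(\alpha)$ and $U(\alpha)^{-1}$ are bounded.
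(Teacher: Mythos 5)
Your proof is correct and follows essentially the same route as the paper: part (i) is verbatim the paper's argument (pull $U(\alpha)^{\pm1}$ out of the Dunford integral via \eqref{r3.7.3} and invoke Lemma~\ref{new-l3.9}(iv)), while the paper simply states that (ii) and (iii) "follow shortly" from standard spectral-projection facts, which is exactly the bookkeeping you carry out with the intertwining identities.
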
	
\begin{proof}
(i) Since $\partial D(0,\frac{\nu}{2})\subset\cE_\nu\subset\rho(A_\alpha)=\rho(B_\alpha)$ for any $\alpha\in[0,\eps_1]\subset [0,\eps_0]$ by Lemma~\ref{new-l3.6}, from \eqref{r3.7.3} and Lemma~\ref{new-l3.9}(iv) it follows that the spectral projection of $B_\alpha$ associated to the spectral subset of $\sigma(B_\alpha)$ contained in $D(0,\frac{\nu}{2})$ is given by
\begin{equation}\label{l3.8.1}
\frac{1}{2\pi\rmi}\int_{\partial D(0,\frac{\nu}{2})}R(\lambda,B_\alpha)\rmd\lambda=\frac{1}{2\pi\rmi}\int_{\partial D(0,\frac{\nu}{2})}U(\alpha)^{-1}R(\lambda,A_\alpha)U(\alpha)\rmd\lambda=U(\alpha)^{-1}P_\alpha U(\alpha)=P_0.
\end{equation}
for any $\alpha\in[0,\eps_1]$. Assertions (ii) and (iii) follow shortly, cf.
\cite{EN,lunardi,P}.
\end{proof}	
We define the linear operators $\tB_\alpha:\dom(\tB_\alpha)\subseteq\mathrm{Ker}\,P_0\to\mathrm{Ker}\,P_0$ and $K_\alpha:\mathrm{Im}\,P_0\to\mathrm{Im}\,P_0$ by
\begin{equation}\label{def-tilde-B-K}
\dom(\tB_\alpha)=\dom(B_\alpha)\cap\mathrm{Ker}P_0,\; \tB_\alpha=(B_\alpha)_{|\dom(B_\alpha)\cap\mathrm{Ker}P_0},\; K_\alpha=(B_\alpha)_{|\mathrm{Im}P_0},\;\alpha\in[0,\eps_1].
\end{equation}
In addition, we denote by $\{S_\alpha(t)\}_{t\geq 0}$ and $\{\tS_\alpha(t)\}_{t\geq 0}$, the semigroups generated by $B_\alpha$ and $\tB_\alpha$, $\alpha\in[0,\eps_1]$, respectively. The advantage of working with the family of operators $B_\alpha$, $\alpha\in[0,\eps_1]$, is that its spectral projection $P_0$
associated to the spectral subset of $\sigma(B_\alpha)$ contained in $D(0,\frac{\nu}{2})$ is \textit{independent} on $\alpha\in[0,\eps_1]$. 
From \eqref{r3.7.4} we see that the semigroups $\{S_\alpha(t)\}_{t\geq 0}$ and $\{\tS_\alpha(t)\}_{t\geq 0}$ are analytic, moreover, the following representation holds:
\begin{equation}\label{representation-S}
T_\alpha(t)=U(\alpha)S_\alpha(t)U(\alpha)^{-1},\; S_\alpha(t)=\begin{bmatrix}
e^{tK_\alpha}&0\\
0&\tS_\alpha(t)\end{bmatrix},\;\mbox{for any}\; t\geq 0,\,\alpha\in[0,\eps_1],
\end{equation}
with respect to the decomposition $\bX=\mathrm{Im}P_0\oplus\mathrm{Ker}P_0$.

In the next lemma we study the spectrum of the linear operator $\tB_\alpha$, in particular we estimate  $\sup\mathrm{Re}\sigma(\tB_\alpha)$ for $\alpha\in [0,\eps_1]$. 
\begin{lemma}\label{l3.9}
Assume Hypotheses (H1)--(H4). Then, the following assertions hold true:
\begin{enumerate}
\item[(i)] $\sigma(\tB_\alpha)\subseteq\{\lambda\in\CC:\mathrm{Re}\lambda\leq-\frac{\nu}{2}\}$ for any $\alpha\in[0,\eps_1]$;
\item[(ii)] $\|R(\lambda,\tB_\alpha)\|\leq \frac{96M_2}{\nu}$ whenever $\mathrm{Re}\lambda\geq -\frac{\nu}{2}$, $\alpha\in [0,\eps_1]$.
\end{enumerate}	
\end{lemma}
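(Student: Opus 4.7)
The argument splits according to $|\lambda|$: outside the disk $\overline{D(0,\nu/2)}$ the resolvent of $\tB_\alpha$ is controlled by that of $B_\alpha$, while inside we must argue via the maximum modulus principle. For (i), by Lemma~\ref{l3.8}(i) and Remark~\ref{r3.7} we have $\sigma(\tB_\alpha)=\sigma(B_\alpha)\setminus D(0,\nu/2)=\sigma(A_\alpha)\setminus D(0,\nu/2)$. Lemma~\ref{new-l3.6} gives $\cE_\nu\subset\rho(A_\alpha)$, which forces $\sigma(A_\alpha)\subset\{\lambda\in\CC:\mathrm{Re}\lambda<-3\nu/4\}\cup D(0,\nu/4)$. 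Since $D(0,\nu/4)\subset D(0,\nu/2)$, removing the inner disk leaves $\sigma(\tB_\alpha)\subset\{\lambda\in\CC:\mathrm{Re}\lambda<-3\nu/4\}\subset\{\lambda\in\CC:\mathrm{Re}\lambda\le-\nu/2\}$, giving (i) with room to spare.

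For (ii), first handle the exterior case $\mathrm{Re}\lambda\ge-\nu/2$, $|\lambda|\ge\nu/2$. Then $\lambda\in\cE_\nu\subset\rho(A_\alpha)=\rho(B_\alpha)$, and since $R(\lambda,\tB_\alpha)=R(\lambda,B_\alpha)|_{\ker P_0}$ by the block-diagonal structure of $B_\alpha$ with respect to $\mathrm{Im}\,P_0\oplus\mathrm{Ker}\,P_0$, Remark~\ref{r3.7}, Lemma~\ref{new-l3.6}, and the estimates $\|U(\alpha)^{-1}\|\le 2$, $\|U(\alpha)\|\le 3/2$ from Lemma~\ref{new-l3.9} give
\[
\|R(\lambda,\tB_\alpha)\|\le\|U(\alpha)^{-1}\|\,\|R(\lambda,A_\alpha)\|\,\|U(\alpha)\|\le 2\cdot\frac{16M_2}{\nu}\cdot\frac{3}{2}=\frac{48M_2}{\nu}\le\frac{96M_2}{\nu}.
\]

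For the interior case $|\lambda|<\nu/2$, note that $\mathrm{Re}\lambda\le-3\nu/4$ and $|\lambda|<\nu/2$ are incompatible, so part (i) yields $\sigma(\tB_\alpha)\cap\overline{D(0,\nu/2)}=\emptyset$ and $\lambda\mapsto R(\lambda,\tB_\alpha)$ is analytic on an open neighborhood of the closed disk $\overline{D(0,\nu/2)}$. I would then apply the maximum modulus principle in operator-valued form: for any $y\in\ker P_0$ and any $\phi$ in the dual unit ball, the scalar analytic function $\lambda\mapsto\phi(R(\lambda,\tB_\alpha)y)$ attains its maximum modulus on the boundary $|\lambda|=\nu/2$; taking suprema over $\phi$ and $y$ and invoking the exterior bound gives $\|R(\lambda,\tB_\alpha)\|\le 48M_2/\nu\le 96M_2/\nu$ throughout $\overline{D(0,\nu/2)}$. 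Combining the two cases proves (ii).

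The main obstacle is this interior case: when $\lambda\in\sigma(K_\alpha)\subset D(0,\nu/2)$, one has $\lambda\notin\rho(B_\alpha)$, so $R(\lambda,\tB_\alpha)$ cannot simply be read off from $R(\lambda,B_\alpha)$. A naive Cauchy-integral representation such as $R(\lambda,\tB_\alpha)y=(2\pi i)^{-1}\oint_{|z|=\nu/2}R(z,B_\alpha)y\,(z-\lambda)^{-1}dz$ would introduce a singular factor $(\nu/2-|\lambda|)^{-1}$ blowing up as $|\lambda|\uparrow\nu/2$, whereas the operator-valued maximum modulus principle yields a bound independent of $|\lambda|$ and is the cleanest route to a constant of the form $CM_2/\nu$.
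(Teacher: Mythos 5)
Your proof is correct, and for the crucial interior estimate it takes a genuinely different route from the paper. Part (i) and the exterior bound $\|R(\lambda,\tB_\alpha)\|\le\|U(\alpha)^{-1}\|\,\|R(\lambda,A_\alpha)\|\,\|U(\alpha)\|\le 48M_2/\nu$ on $\cE_\nu$ coincide with the paper's argument. For $\lambda$ inside the disk, the paper does \emph{not} use the maximum modulus principle: it splits at $|\lambda|=\nu/4$ rather than $\nu/2$, and on $D(0,\frac{\nu}{4})$ it invokes the Kato-type reduced resolvent
$\tF(\lambda,\alpha)=\frac{1}{2\pi\rmi}\int_{\partial D(0,\frac{\nu}{2})}\frac{1}{\lambda-\zeta}R(\zeta,A_\alpha)\,\rmd\zeta$,
verifying by a direct computation that $R(\lambda,\tA_\alpha)=-\tF(\lambda,\alpha)_{|\mathrm{Ker}P_\alpha}$ and bounding the kernel by $|\lambda-\zeta|\ge\nu/4$; this is exactly how it defuses the singular factor $(\tfrac{\nu}{2}-|\lambda|)^{-1}$ that you flag, at the cost of the extra factor $2$ that produces $96M_2/\nu$ instead of your $48M_2/\nu$. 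Your subharmonicity argument is sound: the strengthened form of (i) that you derive, $\sigma(\tB_\alpha)\subseteq\{\mathrm{Re}\,\lambda\le-\frac{3\nu}{4}\}$, puts the spectrum at distance $\nu/4$ from $\overline{D(0,\frac{\nu}{2})}$, so $R(\cdot,\tB_\alpha)$ is analytic on an open neighborhood of the closed disk and the scalar maximum modulus principle applied to $\phi(R(\cdot,\tB_\alpha)y)$ transfers the boundary bound to the interior. What each approach buys: yours is shorter, avoids the explicit contour representation, and yields the sharper constant $48M_2/\nu$; the paper's is more self-contained at the level of explicit formulas (the operator $\tF$ is reused conceptually in Lemma~\ref{l3.11} and Lemma~\ref{l3.12}, where the same contour integrals over $\partial D(0,\frac{\nu}{2})$ are manipulated term by term), so the two case splits are stylistically aligned with the rest of Section~\ref{s3}. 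Either way the stated bound $96M_2/\nu$ holds.
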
	
\begin{proof}
(i) From Lemma~\ref{new-l3.6}, Lemma~\ref{l3.8}, \eqref{def-tilde-B-K} and \eqref{r3.7.1}  we have  
\begin{equation}\label{l3.9.8}
\sigma(\tB_\alpha)=\sigma(B_\alpha)\setminus D\big(0,\frac{\nu}{2}\big)=\sigma(A_\alpha)\setminus D\big(0,\frac{\nu}{2}\big)\subseteq(\CC\setminus\cE_\nu)\setminus D\big(0,\frac{\nu}{2}\big)\subseteq\big\{\lambda\in\CC:\mathrm{Re}\lambda\leq-\frac{\nu}{2}\big\}.
\end{equation}
for any $\alpha\in[0,\eps_1]$, proving (i).

\noindent (ii) First, we note that from Lemma~\ref{new-l3.6}, \eqref{new-l3.9.4} and \eqref{r3.7.3} it follows that 
\begin{equation}\label{l3.9.9}
\|R(\lambda,\tB_\alpha)\|\leq\|R(\lambda,B_\alpha)\|=\|U(\alpha)^{-1}R(\lambda,A_\alpha)U(\alpha)\|\leq 3\|R(\lambda,A_\alpha)\|\leq\frac{48M_2}{\nu}
\end{equation}  
for any $\lambda\in\cE_\nu$, $\alpha\in[0,\eps_1]$. To prove the lemma we need to prove the estimate from (ii) for the case when $\lambda\in D(0,\frac{\nu}{4})$. 
From Lemma~\ref{new-l3.9}(iv) one can readily check that
\begin{equation}\label{l3.9.10}
\mathrm{Ker}P_\alpha=U(\alpha)\mathrm{Ker}P_0\;\mbox{for any}\;\alpha\in[0,\eps_1].
\end{equation}
We infer that the linear operator $\tU(\alpha):=U(\alpha)_{|\mathrm{Ker}P_0}$is bounded, invertible from $\mathrm{Ker}P_0$ to $\mathrm{Ker}P_\alpha$, with bounded inverse $\tU(\alpha)^{-1}=U(\alpha)^{-1}_{|\mathrm{Ker}P_\alpha}$, for any $\alpha\in[0,\eps_1]$. From Lemma~\ref{new-l3.9}(i), \eqref{def-B-alpha} and \eqref{l3.9.10} we obtain that
\begin{equation}\label{l3.9.11}
U(\alpha)\big(\dom(B_\alpha)\cap\mathrm{Ker}P_0\big)=\dom(A_\alpha)\cap\mathrm{Ker}P_\alpha=\dom(A)\cap\mathrm{Ker}P_\alpha\;\mbox{for any}\;\alpha\in[0,\eps_1],
\end{equation}
which implies that 
\begin{equation}\label{l3.9.12}
\tB_\alpha=\tU(\alpha)^{-1}\tA_\alpha\tU(\alpha)\;\mbox{for any}\;\alpha\in[0,\eps_1].
\end{equation}
Moreover, from \eqref{new-l3.9.4} and \eqref{l3.9.12} we have
\begin{align}\label{l3.9.13}
\|R(\lambda,\tB_\alpha)&=\|\tU(\alpha)^{-1}R(\lambda,\tA_\alpha)\tU(\alpha)\|\leq\|U(\alpha)^{-1}_{|\mathrm{Ker}P_\alpha}\|\,\|R(\lambda,\tA_\alpha)\|\,\|U(\alpha)_{|\mathrm{Ker}P_0}\|\nonumber\\
&\leq \|U(\alpha)^{-1}\|\,\|R(\lambda,\tA_\alpha)\|\,\|U(\alpha)\|\leq 3\|R(\lambda,\tA_\alpha)\|
\end{align} 
for any $\lambda\in\rho(\tB_\alpha)=\rho(\tA_\alpha)$ and $\alpha\in[0,\eps_1]$. 
To estimate $\|R(\lambda,\tA_\alpha)\|$ for $\lambda\in D(0,\frac{\nu}{4})$ and $\alpha\in[0,\eps_1]$ we need to find a contour integral representation of $R(\lambda,\tA_\alpha)$. We consider the function $\tF:\CC\setminus\partial D(0,\frac{\nu}{2})\times[0,\eps_1]\to\mathcal{B}(\bX)$ defined by 
\begin{equation}\label{l3.9.14}
\tF(\lambda,\alpha):=\frac{1}{2\pi\rmi}\int_{\partial D(0,\frac{\nu}{2})}\frac{1}{\lambda-\zeta}R(\zeta,A_\alpha)\rmd\zeta,
\end{equation}
c.f. \cite[Section III.6.5]{Kato}. From Remark~\ref{r3.6}(i) we have $P_\alpha R(\zeta,A_\alpha)=R(\zeta,A_\alpha)P_\alpha$ for any $\zeta\in\partial D(0,\frac{\nu}{2})$ and $\alpha\in[0,\eps_1]$, which implies that 
\begin{equation}\label{l3.9.15}
P_\alpha \tF(\lambda,\alpha)=\tF(\lambda,\alpha)P_\alpha\;\mbox{for any}\;\lambda\in\CC\setminus\partial D(0,\frac{\nu}{2}),\;\alpha\in[0,\eps_1].
\end{equation}
Since $A_\alpha R(\zeta,A_\alpha)=\zeta R(\zeta,A_\alpha)-I_{\bX}$ for any $\zeta\in\partial D(0,\frac{\nu}{2})$ and $A_\alpha$ is a closed linear operator we obtain that $\mathrm{Im}\,\tF(\lambda,\alpha)\subseteq\dom(A_\alpha)$ and 
\begin{align}\label{l3.9.16}
A_\alpha \tF(\lambda,\alpha)&=\frac{1}{2\pi\rmi}\int_{\partial D(0,\frac{\nu}{2})}\frac{1}{\lambda-\zeta}\big(\zeta R(\zeta,A_\alpha)-I_{\bX}\big)\rmd\zeta\nonumber\\
&=\frac{1}{2\pi\rmi}\int_{\partial D(0,\frac{\nu}{2})}\Big(\frac{\lambda}{\lambda-\zeta}-1\Big)R(\zeta,A_\alpha)\rmd\zeta+\frac{1}{2\pi\rmi}\Bigg(\int_{\partial D(0,\frac{\nu}{2})}\frac{\rmd\zeta}{\zeta-\lambda}\Bigg)I_{\bX}\nonumber\\
&=\lambda\tF(\lambda,\alpha)-P_\alpha+\frac{1}{2\pi\rmi}\Bigg(\int_{\partial D(0,\frac{\nu}{2})}\frac{\rmd\zeta}{\zeta-\lambda}\Bigg)I_{\bX}
\end{align}
for any $\lambda\in\CC\setminus\partial D(0,\frac{\nu}{2})$ and $\alpha\in[0,\eps_1]$. From \eqref{l3.9.15} and \eqref{l3.9.16} it follows that 
$\tF(\lambda,\alpha)\mathrm{Ker}P_\alpha\subseteq(\dom(A_\alpha)\cap\mathrm{Ker}P_\alpha)=\dom(\tA_\alpha)$ and 
\begin{equation}\label{l3.9.17}
\tA_\alpha\tF(\lambda,\alpha)x=A_\alpha\tF(\lambda,\alpha)x=\lambda\tF(\lambda,\alpha)x-P_\alpha x+\frac{1}{2\pi\rmi}\Bigg(\int_{\partial D(0,\frac{\nu}{2})}\frac{\rmd\zeta}{\zeta-\lambda}\Bigg)x=\lambda\tF(\lambda,\alpha)x+x
\end{equation}
for any $x\in\mathrm{Ker}P_\alpha$, $\lambda\in D(0,\frac{\nu}{4})$ and $\alpha\in[0,\eps_1]$. Since $D(0,\frac{\nu}{4})\subset D(0,\frac{\nu}{2})\subseteq\rho(\tA_\alpha)$ for any $\alpha\in[0,\eps_1]$, by Remark~\ref{r3.6}(iv), we conclude that $R(\lambda,\tA_\alpha)=-\tF(\lambda,A_\alpha)_{|\mathrm{Ker}P_\alpha}$ for any  $\lambda\in D(0,\frac{\nu}{4})$ and $\alpha\in[0,\eps_1]$. Since $D(0,\frac{\nu}{2})\subset\cE_\nu$ and $|\lambda-\zeta|\geq|\zeta|-|\lambda|\geq\frac{\nu}{4}$ for any $\lambda\in D(0,\frac{\nu}{4})$ and $\zeta\in\partial D(0,\frac{\nu}{2})$, from Lemma~\ref{new-l3.6} and \eqref{l3.9.14} we infer that 
\begin{align}\label{l3.9.18}
\|R(\lambda,\tA_\alpha)\|&=\|\tF(\lambda,A_\alpha)_{|\mathrm{Ker}P_\alpha}\|\leq\|\tF(\lambda,\alpha)\|=\Big\|\frac{1}{2\pi\rmi}\int_{\partial D(0,\frac{\nu}{2})}\frac{1}{\lambda-\zeta}R(\zeta,A_\alpha)\rmd\zeta\Big\|\nonumber\\
&\leq\frac{\mathrm{length}\big(\partial D(0,\frac{\nu}{2})\big)}{2\pi}\sup_{|\zeta|=\frac{\nu}{2}}\|R(\zeta,A_\alpha)\|\sup_{|\zeta|=\frac{\nu}{2}}\frac{1}{|\lambda-\zeta|}\leq
\frac{32M_2}{\nu}
\end{align}
for any  $\lambda\in D(0,\frac{\nu}{4})$ and $\alpha\in[0,\eps_1]$. Finally, from 
\eqref{l3.9.13} and \eqref{l3.9.18} we conclude that 
\begin{equation}\label{l3.9.19}
\|R(\lambda,\tB_\alpha)\|\leq\frac{96M_2}{\nu}\;\mbox{for any}\;\lambda\in D(0,\frac{\nu}{4}),\;\alpha\in[0,\eps_1],
\end{equation}
proving the lemma.
\end{proof}
We are now ready to prove that the semigroup $\{\tS_\alpha(t)\}_{t\geq 0}$, generated by $\tB_\alpha$, is uniformly exponentially stable for $\alpha\in[0,\eps_1]$. 
\begin{lemma}\label{l3.10}
Assume Hypotheses (H1)--(H4). Then, the following estimate holds
\begin{equation}\label{est-tilde-S-alpha}
\|\tS_\alpha(t)\|\leq M_3e^{-\frac{\nu}{2}t}\quad\mbox{for any}\quad t\geq 0,\;\alpha\in[0,\eps_1]\;\mbox{where},
\end{equation}
\begin{equation}\label{M-3}
M_3=\max\Bigg\{\Big(\frac{3\tM}{\pi\ta}+\frac{96M_2}{\pi\nu}(\ta+\frac{\nu}{2})|\tan(\frac{\tte}{2}+\frac{\pi}{4})|\Big),\frac{\tM\Big(e(6\tte+3\pi)-12\sec(\frac{\tte}{2}+\frac{\pi}{4})\Big)}{4\pi}e^{(\ta+\frac{\nu}{2})}\Bigg\},
\end{equation}
and $M_2$ is defined in \eqref{l3.5.4}.
\end{lemma}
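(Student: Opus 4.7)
The plan is to apply Lemma~\ref{l2.4} to the generator $\tB_\alpha$ using $\mu=-\nu/2$, combining the uniform sectorial estimates inherited from $B_\alpha$ with the resolvent bound of Lemma~\ref{l3.9}. First, I would record that $\tB_\alpha=(B_\alpha)_{|\dom(B_\alpha)\cap\mathrm{Ker}P_0}$ is itself sectorial with the same uniform constants as $B_\alpha$: since $\mathrm{Ker}P_0$ is invariant and the resolvent of the restriction is the restriction of the resolvent, \eqref{r3.7.4} immediately yields $\Omega_{\ta,\tte}\subseteq\rho(\tB_\alpha)$ with $\|R(\lambda,\tB_\alpha)\|\leq 3\tM/|\lambda-\ta|$ for all $\lambda\in\Omega_{\ta,\tte}$ and all $\alpha\in[0,\eps_1]$. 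In the language of Hypothesis (H1) applied to $\tB_\alpha$, the relevant constants are $a=\ta$, $\theta=\tte$, $M_0=3\tM$.

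Next, with $\mu=-\nu/2>\rms(\tB_\alpha)$ (valid by Lemma~\ref{l3.9}(i)) and $\varphi=\frac{\tte}{2}+\frac{\pi}{4}\in(\frac{\pi}{2},\tte)$, I would estimate $\cV_{\tB_\alpha}(t,\mu,\varphi)$ via Remark~\ref{r2.5}:
\[
\sup_{t>0}\|\cV_{\tB_\alpha}(t,\mu,\varphi)\|\leq 2(\ta+\tfrac{\nu}{2})|\tan(\tfrac{\tte}{2}+\tfrac{\pi}{4})|\sup\bigl\{\|R(-\tfrac{\nu}{2}+\rmi s,\tB_\alpha)\|:|s|\leq(\ta+\tfrac{\nu}{2})|\tan\varphi|\bigr\}.
\]
By Lemma~\ref{l3.9}(ii) the resolvent supremum is bounded by $96M_2/\nu$ uniformly in $\alpha\in[0,\eps_1]$. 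Substituting into Lemma~\ref{l2.4} produces the estimate
\[
\|\tS_\alpha(t)\|\leq e^{-\frac{\nu}{2}t}\left[\frac{3\tM}{\pi(\ta+\frac{\nu}{2})t}+\frac{96M_2(\ta+\frac{\nu}{2})|\tan(\frac{\tte}{2}+\frac{\pi}{4})|}{\pi\nu}\right],\quad t>0,
\]
uniformly in $\alpha\in[0,\eps_1]$.

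This bound is the one we want for large $t$, but it is singular at $t=0$, so a small-time estimate must be glued on. For $t\geq 1$, I would use $1/t\leq 1$ and $\ta+\nu/2\geq\ta$ to recover the first entry of the max in \eqref{M-3}. For $t\in[0,1]$, I would instead apply Lemma~\ref{l2.3} directly to $\tB_\alpha$ with the same sectorial constants $(a,\theta,M_0)=(\ta,\tte,3\tM)$, obtaining
\[
\|\tS_\alpha(t)\|\leq \frac{3\tM(e\varphi-\sec\varphi)}{\pi}e^{\ta t}=\frac{\tM\bigl(e(6\tte+3\pi)-12\sec(\frac{\tte}{2}+\frac{\pi}{4})\bigr)}{4\pi}e^{\ta t}.
\]
Writing $e^{\ta t}=e^{-\frac{\nu}{2}t}e^{(\ta+\frac{\nu}{2})t}\leq e^{-\frac{\nu}{2}t}\,e^{\ta+\frac{\nu}{2}}$ for $t\in[0,1]$ gives the second entry of the max in \eqref{M-3}. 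Taking the maximum of the two regimes yields \eqref{est-tilde-S-alpha}.

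The only potential obstacle is ensuring that all constants used in Lemma~\ref{l2.4} and Lemma~\ref{l2.3} are genuinely $\alpha$-independent: the sectorial constants come from \eqref{r3.7.4} (which is uniform by construction through Lemma~\ref{l3.1}), and the vertical-segment resolvent bound comes from Lemma~\ref{l3.9}(ii) (which was proved precisely to be uniform in $\alpha\in[0,\eps_1]$). Consequently the bound $M_3$ in \eqref{M-3} depends only on $\nu$, $M_2$, $\tM$, $\tte$, $\ta$, hence is independent of $\alpha$, completing the proof.
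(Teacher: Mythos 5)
Your proof is correct and follows essentially the same route as the paper: Lemma~\ref{l2.4} applied to $\tB_\alpha$ with $\varphi=\frac{\tte}{2}+\frac{\pi}{4}$, the uniform sectorial bound \eqref{r3.7.4} and the resolvent bound of Lemma~\ref{l3.9}(ii) for $t\geq1$, glued to a Lemma~\ref{l2.3}-type bound for $t\in[0,1]$. The only cosmetic differences are that the paper works with $\mu\in(-\frac{\nu}{2},0)$ and lets $\mu\to-\frac{\nu}{2}$ rather than setting $\mu=-\frac{\nu}{2}$ outright (your choice is also legitimate, since the closed half-plane $\{\mathrm{Re}\,\lambda\geq-\frac{\nu}{2}\}$ lies in $\rho(\tB_\alpha)$ by Lemma~\ref{l3.9}(ii)), and the paper obtains the small-time bound via $\|\tS_\alpha(t)\|\leq 3\|T_\alpha(t)\|$ and Lemma~\ref{l2.3} applied to $A_\alpha$, which yields the same constant as your direct application to $\tB_\alpha$.
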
	
\begin{proof} The proof of the lemma is similar to the proof of Lemma~\ref{l3.2}, the main tool is the estimate \eqref{V-estimate} of Lemma~\ref{l2.4}. We fix $\alpha\in[0,\eps_1]$ and $\mu\in(-\frac{\nu}{2},0)$. Hence, $\mu>\mathrm{s}(\tB_\alpha)$ by Lemma~\ref{l3.9}. Also, we set $\varphi=\frac{\tte}{2}+\frac{\pi}{4}\in(\frac{\pi}{2},\tte)$ and $\widetilde{b}=(\ta-\mu)|\tan\varphi|$. 
	
First, from \eqref{r3.7.4} and \eqref{def-tilde-B-K} we immediately conclude that $\tB_\alpha$ is sectorial, moreover 	
\begin{equation}\label{l3.10.1}
\Omega_{\ta,\tte}\subseteq\rho(B_\alpha)\subseteq\rho(\tB_\alpha),\quad \|R(\lambda,\tB_\alpha)\|\leq\|R(\lambda,B_\alpha)\|\leq \frac{3\tM}{|\lambda-\ta|}\;\mbox{for any}\;\lambda\in\Omega_{\ta,\tte}.
\end{equation}  
In addition, from \eqref{def-V} and Lemma~\ref{l3.9}(ii) we obtain 
\begin{equation}\label{l3.10.2}
\|\cV_{\tB_\alpha}(t,\mu,\varphi)\|=\Big\|\int_{-\widetilde{b}}^{\widetilde{b}}e^{\rmi st}R(\mu+\rmi s,\tB_\alpha)\rmd s\Big\|\leq\frac{192M_2\widetilde{b}}{\nu}=\frac{192M_2(\ta-\mu)|\tan(\frac{\tte}{2}+\frac{\pi}{4})|}{\nu}
\end{equation}	
for any $t>0$. Applying Lemma~\ref{l2.4}, from \eqref{l3.10.1} and \eqref{l3.10.2} we infer that 
\begin{equation}\label{l3.10.3}
\|\tS_\alpha(t)\|\leq\frac{3\tM e^{\mu t}}{\pi(\ta-\mu)t}+\frac{96M_2(\ta-\mu)|\tan(\frac{\tte}{2}+\frac{\pi}{4})|e^{\mu t}}{\pi\nu}\leq\Big(\frac{3\tM}{\pi\ta}+\frac{96M_2}{\pi\nu}(\ta+\frac{\nu}{2})|\tan(\frac{\tte}{2}+\frac{\pi}{4})|\Big)e^{\mu t}
\end{equation}
for any $t\geq 1$. From Lemma~\ref{l2.3}, Lemma~\ref{l3.1}, \eqref{new-l3.9.4} and \eqref{representation-S}  it follows that
\begin{align}\label{l3.10.4}
\|\tS_\alpha(t)\|&\leq\|S_\alpha(t)\|=\|U(\alpha)^{-1} T_\alpha(t)U(\alpha)  
\|\leq \|U(\alpha)^{-1}\|\,\|\|T_\alpha(t)\|\,\|U(\alpha)\|\
\leq 3\|T_\alpha(t)\|\nonumber\\
&\leq\frac{3\tM(e\varphi-\sec\varphi)}{\pi}e^{\ta t}=\frac{\tM\Big(e(6\tte+3\pi)-12\sec(\frac{\tte}{2}+\frac{\pi}{4})\Big)}{4\pi}e^{(\ta+\frac{\nu}{2})t} e^{-t\frac{\nu}{2}}\nonumber\\
&\leq \frac{\tM\Big(e(6\tte+3\pi)-12\sec(\frac{\tte}{2}+\frac{\pi}{4})\Big)}{4\pi}e^{(\ta+\frac{\nu}{2})} e^{-t\frac{\nu}{2}}\;\mbox{for any}\;t\in[0,1].
\end{align}
Passing to the limit as $\mu\to-\frac{\nu}{2}$ in \eqref{l3.10.3}, we obtain the estimate \eqref{est-tilde-S-alpha} from \eqref{l3.10.3} and \eqref{l3.10.4}.
\end{proof}	
Next, we focus our attention on the family of operators $K_\alpha=(B_\alpha)_{|\mathrm{Im}P_0}$, $\alpha\in[0,\eps_1]$. First, we show that it depends continuously on $\alpha\in[0,\eps_1]$. Moreover, we look for a representation of $K_\alpha$ in a neighborhood of $0$.
\begin{lemma}\label{l3.11}
Assume Hypotheses (H1)--(H4). Then, the following assertions hold true:
\begin{enumerate}
\item[(i)] The operator-valued function 
$\alpha\to K_\alpha :[0,\eps_1]\to\mathcal{B}(\mathrm{Im}P_0)$is continuous;
\item[(ii)]	$K_\alpha=\alpha \Big(P_0U(\alpha)^{-1}G(\alpha)U(\alpha)\Big)_{|\mathrm{Im}P_0}$ for any $\alpha\in[0,\eps_1]$, where $G:[0,\eps_1]\to\mathcal{B}(\bX)$ defined by
\begin{equation}\label{def-G}
G(\alpha)=\frac{1}{2\pi\rmi}\int_{\partial D(0,\frac{\nu}{2})}\lambda R(\lambda,A_\alpha)E_0(\alpha)R(\lambda,A)\rmd\lambda;
\end{equation}
\item[(iii)] The function $G$ is bounded. Moreover,
\begin{equation}\label{bound-G}
\|G(\alpha)\|\leq 64M_2^2\sup_{\alpha\in[0,1]}\|E_0(\alpha)\|\quad\mbox{for any}\quad\alpha\in [0,\eps_1].
\end{equation}
\end{enumerate}
\end{lemma}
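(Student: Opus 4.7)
My plan is to establish (ii) first, since both (i) and (iii) follow from it with relative ease. The heart of the argument is the identity
\begin{equation}\label{plan:key}
A_\alpha P_\alpha = \alpha G(\alpha)\quad\text{for}\quad\alpha>0,
\end{equation}
which I would derive by combining three ingredients: the Dunford-type representation $A_\alpha P_\alpha=\frac{1}{2\pi\rmi}\int_{\partial D(0,\nu/2)}\lambda R(\lambda,A_\alpha)\,\rmd\lambda$; the vanishing $AP_0=\frac{1}{2\pi\rmi}\int_{\partial D(0,\nu/2)}\lambda R(\lambda,A)\,\rmd\lambda=0$, which holds because $A_{|\mathrm{Im}\,P_0}\equiv 0$ by Lemma~\ref{l3.5}(i); and the resolvent identity $R(\lambda,A_\alpha)-R(\lambda,A)=R(\lambda,A_\alpha)E(\alpha)R(\lambda,A)$. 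Subtracting the two Dunford integrals and substituting $E(\alpha)=\alpha E_0(\alpha)$ from Hypothesis (H4) produces exactly the integrand of $\alpha G(\alpha)$.

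Next, for $x\in\mathrm{Im}\,P_0$, I would write $K_\alpha x=B_\alpha x=U(\alpha)^{-1}A_\alpha U(\alpha)x$ and use the intertwining $U(\alpha)P_0=P_\alpha U(\alpha)$ (implicit in Lemma~\ref{new-l3.9}(iv)) to obtain $U(\alpha)x=P_\alpha U(\alpha)x$, so $A_\alpha U(\alpha)x=A_\alpha P_\alpha U(\alpha)x=\alpha G(\alpha)U(\alpha)x$ by \eqref{plan:key}. Since $K_\alpha x\in\mathrm{Im}\,P_0$ by Lemma~\ref{l3.8}, applying $P_0$ on the left leaves $K_\alpha x$ unchanged, yielding the claimed formula $K_\alpha=\alpha(P_0U(\alpha)^{-1}G(\alpha)U(\alpha))_{|\mathrm{Im}\,P_0}$. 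The case $\alpha=0$ is automatic since $A_0=A$ annihilates $\mathrm{Im}\,P_0$ and both sides vanish.

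For part (i), rather than going through $G$ directly (which has a subtle point at $\alpha=0$ where $E_0$ need not be defined), I would use the equivalent formulation $K_\alpha=P_0U(\alpha)^{-1}(A_\alpha P_\alpha)U(\alpha)|_{\mathrm{Im}\,P_0}$, with $A_\alpha P_\alpha$ given by its Dunford integral. Norm-continuity of $E$ on $[0,\eps_1]$ (Hypothesis (H3)) combined with the uniform bound of Lemma~\ref{new-l3.6} gives continuity of $R(\lambda,A_\alpha)$ in $\alpha$ uniformly for $\lambda\in\partial D(0,\nu/2)$, so the integral is continuous in $\alpha$; Lemma~\ref{new-l3.9}(i),(iii) then yields continuity of $K_\alpha$. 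For part (iii), a direct ML-estimate on the contour integral \eqref{def-G}, using $|\partial D(0,\nu/2)|=\pi\nu$, $|\lambda|=\nu/2$, $\|R(\lambda,A_\alpha)\|\leq 16M_2/\nu$ from Lemma~\ref{new-l3.6}, and $\|R(\lambda,A)\|\leq 8M_2/\nu$ from \eqref{new-l3.5.4} (noting $\partial D(0,\nu/2)\subset\cE_\nu$), yields the claimed bound with room to spare. The only non-routine step is the algebraic derivation in part (ii), and the main insight there is to recognize $A_\alpha P_\alpha-AP_0$ as the natural quantity whose Dunford representation factors out the $\alpha$.
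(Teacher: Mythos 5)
Your proposal is correct and follows essentially the same route as the paper: the identity $A_\alpha P_\alpha=\alpha G(\alpha)$ obtained from the Dunford integrals, the vanishing of $AP_0$, and the resolvent identity, combined with the intertwining $U(\alpha)P_0=P_\alpha U(\alpha)$ to get the formula for $K_\alpha$, then continuity via the Dunford representation of $A_\alpha P_\alpha$ and an ML-estimate for the bound on $G$. Your separate treatment of $\alpha=0$ (where $E_0$ is not a priori defined under (H4) alone) is a minor point of extra care not spelled out in the paper.
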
	
\begin{proof}
(i) From Lemma~\ref{new-l3.9}(iv) and \eqref{def-tilde-B-K} we have 
\begin{equation}\label{l3.11.1}
K_\alpha x=P_0K_\alpha x=P_0U(\alpha)^{-1}A_\alpha U(\alpha)x=P_0U(\alpha)^{-1}A_\alpha U(\alpha) P_0x=P_0U(\alpha)^{-1}A_\alpha P_\alpha U(\alpha)x
\end{equation}
for any $x\in\mathrm{Im}P_0$ and $\alpha\in[0,\eps_1]$. Since $A_\alpha R(\lambda,A_\alpha)=\lambda R(\lambda,A_\alpha)-I_{\bX}$ for any $\lambda\in\rho(A_\alpha)$, $\mathrm{Im}P_\alpha\subset\dom(A_\alpha)$, by Remark~\ref{r3.6}(ii), from \eqref{def-P-alpha} we obtain that 
\begin{align}\label{l3.11.2}
A_\alpha P_\alpha&=\frac{1}{2\pi\rmi}\int_{\partial D(0,\frac{\nu}{2})}A_\alpha R(\lambda,A_\alpha)\rmd\lambda=\frac{1}{2\pi\rmi}\int_{\partial D(0,\frac{\nu}{2})}\big(\lambda R(\lambda,A_\alpha)-I_{\bX}\big)\rmd\lambda\nonumber\\
&=\frac{1}{2\pi\rmi}\int_{\partial D(0,\frac{\nu}{2})}\lambda R(\lambda,A_\alpha)\rmd\lambda
\end{align}
for any $\alpha\in[0,\eps_1]$. Since the operator-valued function $E$ is continuous on $[0,\eps_1]$, by Hypothesis (H3),  from Lemma~\ref{new-l3.6} we infer that the function
\begin{equation}\label{l3.11.3}
(\lambda,\alpha)\to R(\lambda,A_\alpha):\cE_\nu\times[0,\eps_1] \to\mathcal{B}(\bX)\;\mbox{is continuous}.
\end{equation}
Since $\partial D(0,\frac{\nu}{2})\subset\cE_\nu$, by \eqref{def-calE-nu}, from \eqref{l3.11.2} and \eqref{l3.11.3} we conclude that 
\begin{equation}\label{l3.11.4}
\alpha\to A_\alpha P_\alpha :[0,\eps_1]\to\mathcal{B}(\bX)\;\mbox{is continuous}.
\end{equation}
Assertion (i) follows shortly from Lemma~\ref{new-l3.9}(i) and (iii), \eqref{l3.11.1} and \eqref{l3.11.4}. 

\noindent (ii) We recall that from Lemma~\ref{l3.5}(i) and \eqref{l3.5.1} we have
\begin{equation}\label{l3.12.1}
0=AP_0=\frac{1}{2\pi\rmi}\int_{\partial D(0,\frac{\nu}{2})}\big(\lambda R(\lambda,A)-I_{\bX}\big)\rmd\lambda=\frac{1}{2\pi\rmi}\int_{\partial D(0,\frac{\nu}{2})}\lambda R(\lambda,A)\rmd\lambda.
\end{equation}
From \eqref{representationE0}, \eqref{l3.11.2} and \eqref{l3.12.1} it follows that 
\begin{align}\label{l3.12.3}
A_\alpha P_\alpha&=\frac{1}{2\pi\rmi}\int_{\partial D(0,\frac{\nu}{2})}\lambda R(\lambda,A_\alpha)\rmd\lambda=\frac{1}{2\pi\rmi}\int_{\partial D(0,\frac{\nu}{2})}\lambda \Big(R(\lambda,A_\alpha)-R(\lambda,A)\Big)\rmd\lambda\nonumber\\	&=\frac{1}{2\pi\rmi}\int_{\partial D(0,\frac{\nu}{2})}\lambda R(\lambda,A_\alpha)E(\alpha)R(\lambda,A)\rmd\lambda=\frac{\alpha}{2\pi\rmi}\int_{\partial D(0,\frac{\nu}{2})}\lambda R(\lambda,A_\alpha)E_0(\alpha)R(\lambda,A)\rmd\lambda.
\end{align}
for any $\alpha\in[0,\eps_1]$, proving (ii).

\noindent (iii) From \eqref{def-calE-nu} one can readily check that
\begin{align}\label{l3.15.4}
\|G(\alpha)\|&=\Big\| \frac{1}{2\pi\rmi}\int_{\partial D(0,\frac{\nu}{2})}\lambda R(\lambda,A_\alpha)E_0(\alpha)R(\lambda,A)\rmd\lambda \Big\|\nonumber\\
&\leq\frac{\mathrm{length}\big(\partial D(0,\frac{\nu}{2})\big)}{2\pi}     
\sup_{|\lambda|=\frac{\nu}{2}}\|\lambda R(\lambda,A_\alpha)\|\sup_{\alpha\in[0,1]}\|E_0(\alpha)\|
\sup_{|\lambda|=\frac{\nu}{2}}\| R(\lambda,A)\|;\nonumber\\
&\leq 64M_2^2\sup_{\alpha\in[0,1]}\|E_0(\alpha)\|\quad\mbox{for any}\quad\alpha\in [0,\eps_1],
\end{align}
proving the lemma.\end{proof}
The previous lemma shows that $K_\alpha$ is of order $\mathcal{O}(\alpha)$ in a neighborhood of $0$. In addition, we note that the operator valued function $G$ is not necessarily continuous at $0$. Therefore, to prove our main result we assume Hypothesis (H5).
\begin{lemma}\label{l3.12}
Assume Hypotheses (H1)--(H5). Then, the following assertion hold true:
\begin{enumerate}
\item[(i)] The operator valued function $G$ defined in \eqref{def-G} is continuous in the $\mathcal{B}(\bX)$ operator norm;
\item[(ii)] 
$P_0G(0)_{|\mathrm{Im}P_0}=P_0E_0(0)_{|\mathrm{Im}P_0}$.
 \item[(iii)] $\|K_\alpha-\alpha P_0E_0(0)_{|\mathrm{Im}P_0}\|\leq \frac{8M_2\nu}{\eps_1^2}\alpha^2+768M_2^2\alpha r(\alpha)$ for any $\alpha\in [0,\eps_1]$. 
\end{enumerate}
\end{lemma}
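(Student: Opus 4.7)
The plan is to prove the three assertions in order, each building on earlier results in the section. Part (i) is a uniform-continuity argument. By Hypothesis (H5), $E_0$ extends continuously to $[0,\eps_1]$ in the operator norm; the map $(\lambda,\alpha)\mapsto R(\lambda,A_\alpha)$ is jointly continuous on $\partial D(0,\nu/2)\times[0,\eps_1]$ by \eqref{l3.11.3}, and $\lambda\mapsto R(\lambda,A)$ is continuous on $\partial D(0,\nu/2)$. Consequently the integrand in \eqref{def-G} is jointly continuous on the compact set $\partial D(0,\nu/2)\times[0,\eps_1]$, which lets me pass continuity in $\alpha$ under the integral sign and conclude continuity of $G$ in $\mathcal{B}(\bX)$.

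For part (ii), I would compute $G(0)$ explicitly using the block representation \eqref{representation-A-T} of $A$ with respect to $\bX=\mathrm{Im}\,P_0\oplus\mathrm{Ker}\,P_0$. That representation yields $R(\lambda,A)x=x/\lambda$ for $x\in\mathrm{Im}\,P_0$, and since $P_0$ commutes with $R(\lambda,A)$, one has $P_0R(\lambda,A)=\tfrac{1}{\lambda}P_0$ on the contour. Substituting $R(\lambda,A)x=x/\lambda$ inside $G(0)x$ and then applying $P_0$ on the left gives $P_0G(0)x=\tfrac{1}{2\pi\rmi}\int_{\partial D(0,\nu/2)}\tfrac{1}{\lambda}P_0E_0(0)x\,\rmd\lambda$, and Cauchy's formula evaluates this integral to $P_0E_0(0)x$.

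For part (iii), I would start from $K_\alpha=\alpha(P_0U(\alpha)^{-1}G(\alpha)U(\alpha))_{|\mathrm{Im}\,P_0}$ from Lemma~\ref{l3.11}(ii) and use (ii) to rewrite $\alpha P_0E_0(0)_{|\mathrm{Im}\,P_0}=\alpha P_0G(0)_{|\mathrm{Im}\,P_0}$. This reduces the problem to bounding $\|U(\alpha)^{-1}G(\alpha)U(\alpha)-G(0)\|$. Writing $U(\alpha)=I_\bX+V(\alpha)$ with $V(\alpha)=(P_\alpha-P_0)(2P_0-I_\bX)$ from \eqref{new-l3.9.1}, I would use the algebraic identity
\[
U(\alpha)^{-1}G(\alpha)U(\alpha)-G(0)=U(\alpha)^{-1}[G(\alpha)-G(0)]U(\alpha)+U(\alpha)^{-1}\big(G(0)V(\alpha)-V(\alpha)G(0)\big).
\]
The second piece is bounded by $4\|G(0)\|\,\|V(\alpha)\|$ via \eqref{new-l3.9.4}; combining $\|G(0)\|\le 64M_2^2\sup\|E_0\|$ from Lemma~\ref{l3.11}(iii) with the estimate $\|V(\alpha)\|\le\alpha\cdot 128M_2^2(16M_2+1)\sup\|E_0\|/\nu$ from \eqref{new-l3.9.3} and the definition of $\eps_1$ produces the $\alpha^2$ contribution. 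For the first piece I would control $\|G(\alpha)-G(0)\|$ by inserting $\pm R(\lambda,A)E_0(\alpha)R(\lambda,A)$ into the integrand of \eqref{def-G}, producing a term proportional to $E_0(\alpha)-E_0(0)$ (bounded by $r(\alpha)$ thanks to Hypothesis (H5)) plus a term containing the resolvent difference $R(\lambda,A_\alpha)-R(\lambda,A)=\alpha R(\lambda,A_\alpha)E_0(\alpha)R(\lambda,A)$ (bounded by a constant multiple of $\alpha$). Using the uniform resolvent bounds of Lemma~\ref{new-l3.6} on the contour $\partial D(0,\nu/2)$ yields $\|G(\alpha)-G(0)\|\lesssim r(\alpha)+\alpha$, which after multiplication by $\alpha$ contributes the $\alpha r(\alpha)$ and a further $\alpha^2$ term. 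The main obstacle is the bookkeeping needed to combine all constants into the specific form $\frac{8M_2\nu}{\eps_1^2}\alpha^2+768M_2^2\alpha r(\alpha)$, in particular absorbing the $\sup\|E_0\|$ factors into $\eps_1$ by using its explicit definition \eqref{def-eps-1}.
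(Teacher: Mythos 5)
Your proposal is correct and follows essentially the same route as the paper: part (ii) via the $\frac{1}{\lambda}$ action of $R(\lambda,A)$ on $\mathrm{Im}\,P_0$ and Cauchy's formula, and part (iii) by reducing to $\|U(\alpha)^{-1}G(\alpha)U(\alpha)-G(0)\|$ and splitting $G(\alpha)-G(0)$ into a resolvent-difference term of order $\alpha$ plus an $E_0(\alpha)-E_0(0)$ term controlled by $r(\alpha)$ via (H5); your commutator identity is just a reorganization of the paper's telescoping. The remaining work is only the constant bookkeeping you already flag, which the paper carries out by repeatedly absorbing $\sup_{\alpha\in[0,1]}\|E_0(\alpha)\|$ factors using the explicit definition of $\eps_1$.
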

\begin{proof}
From \eqref{representationE0} and \eqref{E0-bound} and since $E$ is continuous in the $\mathcal{B}(\bX)$ operator norm by Hypothesis (H3), we infer that $E_0$ is continuous on $[0,\infty)$ in the $\mathcal{B}(\bX)$ operator norm. Moreover, 
$U$ and $U^{-1}$  are continuous on $[0,\eps_1]$, by Lemma~\ref{new-l3.9}.
Assertion (i) follows shortly from \eqref{l3.11.1} and \eqref{l3.11.4}.
	
\noindent (ii)	First, we note that from Lemma~\ref{l3.5}(iii) we have $\big(R(\lambda,A)\big)_{|\mathrm{Im}P_0}=\frac{1}{\lambda}I_{\mathrm{Im}P_0}$ for any $\lambda\in\partial D(0,\frac{\nu}{2})$. Hence, from \eqref{def-G} it follows that
\begin{align}\label{l3.12.4}
P_0G(0)_{|\mathrm{Im}P_0}&=\frac{1}{2\pi\rmi}P_0\int_{\partial D(0,\frac{\nu}{2})} R(\lambda,A)E_0(0)\big(\lambda R(\lambda,A)_{|\mathrm{Im}P_0}\big)\rmd\lambda\nonumber\\
&=\frac{1}{2\pi\rmi}P_0\int_{\partial D(0,\frac{\nu}{2})}R(\lambda,A)E_0(0)_{|\mathrm{Im}P_0}\rmd\lambda=P_0E_0(0)_{|\mathrm{Im}P_0}.
\end{align}
To prove (iii) we need to a long but standard series of estimates of all the functions involved in formula \eqref{def-G}. For completeness we give the details below. Since $E(0)=0$ from \eqref{representationE0}, \eqref{def-calE-nu}, \eqref{new-l3.8.1} and \eqref{def-eps-1} we have 
\begin{equation}\label{l3.15.1}
\|R(\lambda,A_\alpha)-R(\lambda,A)\|\leq \|R(\lambda,A_\alpha)\|\,\|E(\alpha)\|\,\|R(\lambda,A)\|\leq\alpha\frac{128M_2^2}{\nu^2}\sup_{\alpha\in[0,1]}\|E_0(\alpha)\|\leq \frac{\alpha}{2\eps_1\nu}
\end{equation}
for any $\lambda\in\cE_\nu$, $\alpha\in[0,\eps_1]$. From \eqref{E0-bound}, \eqref{def-calE-nu} and \eqref{l3.15.1} we obtain  
\begin{align}\label{l3.15.2}
\|R(\lambda,A_\alpha)E_0(\alpha)-R(\lambda,A)E_0(0)\|&\leq
\Big\|\Big(R(\lambda,A_\alpha)-R(\lambda,A)\Big)E_0(\alpha)\Big\|+\Big\|R(\lambda,A)\Big(E_0(\alpha)-E_0(0)\Big)\Big\|\nonumber\\
&\leq \frac{\alpha}{2\eps_1\nu}\sup_{\alpha\in[0,1]}\|E_0(\alpha)\|+\frac{16M_2}{\nu}r(\alpha)
\end{align}
for any $\lambda\in\cE_\nu$, $\alpha\in[0,\eps_1]$. 
Using again \eqref{def-calE-nu}, from \eqref{def-G} and \eqref{l3.15.2} it follows that 
\begin{align}\label{l3.15.3}
\|G(\alpha)-G(0)\|&=\Big\| \frac{1}{2\pi\rmi}\int_{\partial D(0,\frac{\nu}{2})}\lambda\Big(R(\lambda,A_\alpha)E_0(\alpha)-R(\lambda,A)E_0(0)\Big)R(\lambda,A)\rmd\lambda \Big\|\nonumber\\
&\leq\frac{\mathrm{length}\big(\partial D(0,\frac{\nu}{2})\big)}{2\pi}     
\sup_{|\lambda|=\frac{\nu}{2}}\|\lambda R(\lambda,A)\|\sup_{|\lambda|=\frac{\nu}{2}}\|R(\lambda,A_\alpha)E_0(\alpha)-R(\lambda,A)E_0(0)\|\nonumber\\
&\leq\frac{2M_2\alpha}{\eps_1}\sup_{\alpha\in[0,1]}\|E_0(\alpha)\|
+64M_2^2r(\alpha)\quad\mbox{for any}\quad\alpha\in [0,\eps_1].
\end{align}
Moreover, from \eqref{new-l3.9.1}, \eqref{new-l3.9.3} and \eqref{new-l3.9.5} we have 
\begin{align}\label{l3.15.5}
&\|U(\alpha)-I_\bX\|=\|(P_{\alpha}-P_0)(2P_0-I_\bX)\|\leq
\alpha\frac{128M_2^2(16M_2+1)}{\nu}\sup_{\alpha\in[0,1]}\|E_0(\alpha)\|\leq\frac{\alpha}{2\eps_1}\nonumber\\
&\|U(\alpha)^{-1}-I_\bX\|\leq 2\|U(\alpha)-I_\bX\|\leq\frac{\alpha}{\eps_1}\quad\mbox{for any}\quad\alpha\in [0,\eps_1].
\end{align}
From \eqref{def-eps-1}, \eqref{new-l3.9.4}, \eqref{l3.15.4} and \eqref{l3.15.1}--\eqref{l3.15.5} we infer that 
\begin{align}\label{l3.15.6}
&\|U(\alpha)^{-1}G(\alpha)U(\alpha)-G(0)\|\leq\Big\|\Big(U(\alpha)^{-1}-I_\bX\Big)G(\alpha)U(\alpha)\Big\|+\|G(\alpha)U(\alpha)-G(0)\|\nonumber\\
&\qquad\leq \frac{96M_2^2\alpha}{\eps_1}\sup_{\alpha\in[0,1]}\|E_0(\alpha)\|+\Big\|\Big(G(\alpha)-G(0)\Big)U(\alpha)\Big\|
+\Big\|G(0)\Big(U(\alpha)-I_\bX\Big)\Big\|\nonumber\\
&\qquad\leq\frac{96M_2^2\alpha}{\eps_1}\sup_{\alpha\in[0,1]}\|E_0(\alpha)\|+\frac{3M_2\alpha}{\eps_1}\sup_{\alpha\in[0,1]}\|E_0(\alpha)\|
+96M_2^2r(\alpha)+\frac{32M_2^2\alpha}{\eps_1}\sup_{\alpha\in[0,1]}\|E_0(\alpha)\|\nonumber\\
&\qquad=\alpha\frac{128M_2^2+3M_2}{\eps_1}\sup_{\alpha\in[0,1]}\|E_0(\alpha)\|+96M_2^2r(\alpha)\leq\frac{\alpha\nu}{\eps_1^2}+96M_2^2r(\alpha)\quad\mbox{for any}\quad\alpha\in [0,\eps_1].
\end{align}
Assertion (iii) follows from (i), \eqref{new-l3.9.2}, \eqref{l3.12.4} and \eqref{l3.15.6}.
\end{proof}
\begin{lemma}\label{new-l3.16}
Assume Hypotheses (H1)--(H5). Then, 
\begin{equation}\label{stable-trace-K}
\sup\mathrm{Re}\sigma(P_0E_0(0)_{|\mathrm{Im}P_0})\leq -q_1,
\end{equation}
where the constant $q_1>0$ was introduced in Hypothesis (H3)(ii).
\end{lemma}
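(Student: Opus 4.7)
The plan is to exploit the finite-dimensionality of $\mathrm{Im}\,P_0$, guaranteed by Hypothesis (H2)(ii) and Lemma~\ref{l3.5}(i), together with the estimate from Lemma~\ref{l3.12}(iii), which shows that $K_\alpha/\alpha$ converges to $P_0E_0(0)_{|\mathrm{Im}P_0}$ in operator norm as $\alpha\to 0^+$. Once this limit is in hand, spectral continuity on a finite-dimensional space will let us transfer decay estimates on $\sigma(K_\alpha)$, obtained from Hypothesis (H3)(i), to the limiting operator.

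First, I would invoke Lemma~\ref{l3.12}(iii) to write
\[
\Big\|\tfrac{1}{\alpha}K_\alpha - P_0E_0(0)_{|\mathrm{Im}P_0}\Big\|\leq \frac{8M_2\nu}{\eps_1^2}\alpha+768M_2^2 r(\alpha),
\]
and use Hypothesis (H5) (in particular $r(0)=0$ and continuity of $r$ at $0$) to conclude that the right-hand side tends to $0$ as $\alpha\to 0^+$. Thus $\frac{1}{\alpha}K_\alpha\to P_0E_0(0)_{|\mathrm{Im}P_0}$ in $\mathcal{B}(\mathrm{Im}\,P_0)$, which is finite-dimensional.

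Next, I would locate the spectrum of $K_\alpha$ for small $\alpha>0$. Since $\mathrm{Im}\,P_0$ is invariant under $B_\alpha$ (Lemma~\ref{l3.8}(iii)), one has $\sigma(K_\alpha)\subseteq\sigma(B_\alpha)=\sigma(A_\alpha)$ by Remark~\ref{r3.7}. For $\alpha\in(0,\min\{q_2,\eps_1\}]$, Hypothesis (H3)(i)--(ii) yields $\sigma(A_\alpha)\subseteq\{\lambda\in\CC:\mathrm{Re}\lambda\leq-q_1\alpha\}$, hence $\sigma(K_\alpha)\subseteq\{\lambda\in\CC:\mathrm{Re}\lambda\leq-q_1\alpha\}$, which after dividing by $\alpha$ becomes
\[
\sigma\!\left(\tfrac{1}{\alpha}K_\alpha\right)\subseteq\{\lambda\in\CC:\mathrm{Re}\lambda\leq-q_1\}\quad\mbox{for every}\quad\alpha\in(0,\min\{q_2,\eps_1\}].
\]

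Finally, I would invoke upper semicontinuity of the spectrum for matrices (or, equivalently, continuity of the roots of the characteristic polynomial as functions of its coefficients) to pass to the limit $\alpha\to 0^+$. Since every eigenvalue of $P_0E_0(0)_{|\mathrm{Im}P_0}$ arises as a limit of eigenvalues of $\frac{1}{\alpha}K_\alpha$, each of which lies in the closed half-plane $\{\mathrm{Re}\lambda\leq-q_1\}$, the same inclusion holds for $\sigma(P_0E_0(0)_{|\mathrm{Im}P_0})$, giving $\sup\mathrm{Re}\,\sigma(P_0E_0(0)_{|\mathrm{Im}P_0})\leq-q_1$. The only delicate point is justifying the spectral limit, but since the underlying space is finite-dimensional and convergence is in operator norm, this is a standard consequence of the continuity of the roots of the characteristic polynomial, so no real obstacle arises.
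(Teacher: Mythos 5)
Your proof is correct and follows essentially the same route as the paper: the paper fixes $\lambda_0\in\sigma(P_0E_0(0)_{|\mathrm{Im}P_0})$, uses the continuity of $G_0(\alpha)=\frac{1}{\alpha}K_\alpha$ at $\alpha=0$ (via Lemma~\ref{l3.12}) and the finite-dimensional semicontinuity of the spectrum to produce eigenvalues $\widetilde\lambda_n\in\sigma(G_0(\alpha_n))$ converging to $\lambda_0$, then applies $\sigma(K_\alpha)\subseteq\sigma(A_\alpha)\subseteq\{\mathrm{Re}\,\lambda\leq-q_1\alpha\}$ and passes to the limit. Your only cosmetic deviation is invoking Lemma~\ref{l3.12}(iii) for the norm convergence instead of parts (i)--(ii), and phrasing the limiting step as continuity of the roots of the characteristic polynomial rather than picking an explicit approximating sequence; both are sound.
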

\begin{proof}
We denote by $G_0:[0,\eps_1]\to\mathcal{B}(\mathrm{Im}P_0)$ the function defined by \begin{equation}\label{def-G-0}
G_0(\alpha)=\Big(P_0U(\alpha)^{-1}G(\alpha)U(\alpha)\Big)_{|\mathrm{Im}P_0},
\end{equation}
and introduced in Lemma~\ref{l3.12}(i). Fix $\lambda_0\in\sigma(P_0E_0(0)_{|\mathrm{Im}P_0})$. From Lemma~\ref{l3.12}(ii) we have  $\lambda_0\in\sigma(P_0G(0)_{|\mathrm{Im}P_0})=\sigma(G_0(0))$. Since $G$ is continuous on $[0,\eps_1]$ we conclude that $G_0$ is continuous on $[0,\eps_1]$. 
Using the semi-continuity property of the spectrum of bounded linear operators in finite dimensional spaces, there exist two sequences $\{\alpha_n\}_{n\geq 1}$ and $\{\widetilde{\lambda}_n\}_{n\geq 1}$ such that 
\begin{equation}\label{new-l3.16.1}
\alpha_n\to0,\;\widetilde{\lambda}_n\to\lambda_0\;\mbox{as}\;n\to\infty,\quad\alpha_n\in(0,\eps_1),\;\widetilde{\lambda}_n\in\sigma(G_0(\alpha_n))\;\mbox{for any}\;n\geq 1.
\end{equation}
Since $K_\alpha=\alpha G_0(\alpha)$ for any $\alpha\in[0,\eps_1]$, by Lemma~\ref{l3.12}(i), from Hypothesis (H3)(i),  \eqref{r3.7.1}, \eqref{def-tilde-B-K} and \eqref{new-l3.16.1} it follows that 
\begin{equation}\label{l3.12.7}
\alpha_n\widetilde{\lambda}_n\in\sigma(K_{\alpha_n})\subseteq\sigma(B_{\alpha_n})=\sigma(A_{\alpha_n})\subset\{\lambda\in\CC:\mathrm{Re}\lambda\leq -q(\alpha_n)\}\;\;\mbox{for any}\;\; n\geq 1.
\end{equation}
From \eqref{new-l3.16.1} we have there exists $n_0\geq 1$ such that $\alpha_n\in(0,q_2)$ for any $n\geq n_0$.
Since $q(\alpha)=q_1\alpha$ for any $\alpha\in[0,q_2]$, from \eqref{l3.12.7} we obtain  
\begin{equation}\label{l3.12.8}
\mathrm{Re}\widetilde{\lambda}_n\leq -q_1\;\;\mbox{for any}\;\;n\geq n_0.
\end{equation}
Passing to the limit as $n\to\infty$ yields $\mathrm{Re}\lambda_0\leq -q_1$, proving the lemma.	
\end{proof}
We are now ready to estimate the norm of the semigroup generated by $K_\alpha$ for $\alpha$ in a neighborhood of $0$. To formulate the result, we need to point out a couple of immediate consequences of Hypothesis (H5) and Lemma~\ref{new-l3.16}.
\begin{remark}\label{r3.13}
We assumed in Hypothesis (H5) that the function $r:[0,\infty)\to[0,\infty)$ is continuous, increasing and $r(0)=0$. Hence, it is one-to-one and its inverse $r^{-1}:[0,\infty)\to[0,\infty)$ is continuous and increasing. Moreover, since $G_0(0)=P_0E_0(0)_{|\mathrm{Im}P_0}\in\mathcal{B}(\mathrm{Im}P_0)$ by \eqref{l3.12.4} and \eqref{def-G-0}, from Lemma~\ref{l3.12}(iii) it follows that
\begin{equation}\label{r3.13.1}
\omega_0(G_0(0))=\sup\mathrm{Re}\sigma(G_0(0))\leq-q_1.
\end{equation}
By the definition of the growth rate of a semigroup one may define
$M_4:(0,1)\to[1,\infty)$ such that
\begin{equation}\label{r3.13.2}
M_4(\varkappa)=\sup_{t\geq 0}\big(e^{\frac{1+\varkappa}{2}q_1t}\|e^{tG_0(0)}\|\big)<\infty.
\end{equation}
\end{remark}
\begin{lemma}\label{l3.14}
Assume Hypotheses (H1)--(H5). Then the following estimates hold true, 
\begin{enumerate}
\item[(i)]
$\big\|e^{tK_\alpha}\big\|\leq M_4(\varkappa) e^{-\varkappa q(\alpha)t}$  for any $t\geq 0$, $\alpha\in[0,\eps_2(\varkappa)]$, $\varkappa\in(0,1)$, where $\eps_2:(0,1)\to (0,\infty)$ is the function defined by
\begin{equation}\label{def-eps-2}
\eps_2(\varkappa):=\min\Big\{\eps_1,q_2,\frac{(1-\varkappa)q_1\eps_1^2}{32M_2\nu},r^{-1}\Big(\frac{(1-\varkappa)q_1}{3072M_2^2}\Big)\Big\}>0.
\end{equation}
\item[(ii)]
$\big\|T_\alpha(t)\big\|\leq 3(8M_2+1)(M_3+M_4(\varkappa))e^{-\varkappa q(\alpha)t}$ for any $t\geq0$, $\alpha\in[0,\eps_3(\varkappa)]$, $\varkappa\in(0,1)$ where $\eps_3:(0,1)\to (0,\infty)$ is the function defined by 
\begin{equation}\label{def-eps-3}
\eps_3(\varkappa):=\min\Big\{\eps_2(\varkappa),\frac{\nu}{2q_1}\Big\}=\min\Big\{\eps_1,q_2,\frac{\nu}{2q_1},\frac{(1-\varkappa)q_1\eps_1^2}{32M_2\nu},r^{-1}\Big(\frac{(1-\varkappa)q_1}{3072M_2^2}\Big)\Big\}>0,
\end{equation}
and the constant $M_3$ and the function $M_4$ are defined in \eqref{M-3} and \eqref{r3.13.2}, respectively.
\end{enumerate}
\end{lemma}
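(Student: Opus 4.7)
The plan is to establish (i) by a Duhamel-plus-Gronwall perturbation argument on the finite-dimensional invariant subspace $\mathrm{Im}\,P_0$, and then to derive (ii) from the block-diagonal representation \eqref{representation-S} combined with Lemma~\ref{l3.10}.

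For (i), decompose $K_\alpha = \alpha G_0(0) + R_\alpha$, where $G_0(0) = P_0 E_0(0)_{|\mathrm{Im}P_0}$ by Lemma~\ref{l3.12}(ii) and $R_\alpha := K_\alpha - \alpha G_0(0)$ is controlled by Lemma~\ref{l3.12}(iii):
\begin{equation*}
\|R_\alpha\| \leq \frac{8M_2\nu}{\eps_1^2}\alpha^2 + 768 M_2^2 \alpha\, r(\alpha).
\end{equation*}
Since $K_\alpha$ acts on a finite-dimensional space, Duhamel's formula yields
\begin{equation*}
e^{tK_\alpha} = e^{t\alpha G_0(0)} + \int_0^t e^{(t-s)\alpha G_0(0)}\, R_\alpha\, e^{sK_\alpha}\,ds.
\end{equation*}
Combined with the defining bound $\|e^{\tau\alpha G_0(0)}\| \leq M_4(\varkappa)e^{-\frac{1+\varkappa}{2}q(\alpha)\tau}$ from \eqref{r3.13.2} (using $q(\alpha)=q_1\alpha$ for $\alpha\in[0,q_2]$) and the substitution $\psi(t):=e^{\frac{1+\varkappa}{2}q(\alpha)t}\|e^{tK_\alpha}\|$, one obtains the integral inequality
\begin{equation*}
\psi(t)\leq M_4(\varkappa)+M_4(\varkappa)\|R_\alpha\|\int_0^t\psi(s)\,ds,
\end{equation*}
whose Gronwall resolution gives $\|e^{tK_\alpha}\|\leq M_4(\varkappa)\exp\bigl((-\tfrac{1+\varkappa}{2}q(\alpha)+M_4(\varkappa)\|R_\alpha\|)t\bigr)$. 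Choosing $\alpha$ small enough that the Gronwall correction is absorbed into the safety margin $\tfrac{1-\varkappa}{2}q(\alpha)$---equivalently, requiring $\tfrac{8M_2\nu}{\eps_1^2}\alpha + 768M_2^2 r(\alpha) \leq \tfrac{(1-\varkappa)q_1}{2}$ and splitting the budget equally between the two summands---reduces exactly to the threshold $\eps_2(\varkappa)$ defined in \eqref{def-eps-2}.

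For (ii), use the block-diagonal formula \eqref{representation-S}. Since $P_0$ commutes with $S_\alpha(t)$ by Lemma~\ref{l3.8}, $\|P_0\|\leq 8M_2$ by \eqref{new-l3.9.2}, and $\|I_\bX-P_0\|\leq 1+\|P_0\|$, for any $x\in\bX$ one estimates
\begin{equation*}
\|S_\alpha(t)x\| \leq \|e^{tK_\alpha}\|\,\|P_0x\|+\|\tS_\alpha(t)\|\,\|(I_\bX-P_0)x\| \leq (8M_2+1)\bigl(\|e^{tK_\alpha}\|+\|\tS_\alpha(t)\|\bigr)\|x\|.
\end{equation*}
Then $\|T_\alpha(t)\|\leq \|U(\alpha)\|\,\|U(\alpha)^{-1}\|\,\|S_\alpha(t)\|\leq 3\|S_\alpha(t)\|$ via \eqref{new-l3.9.4}, producing the prefactor $3(8M_2+1)$. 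Inserting part (i) and Lemma~\ref{l3.10} gives $\|T_\alpha(t)\|\leq 3(8M_2+1)\bigl(M_4(\varkappa)e^{-\varkappa q(\alpha)t}+M_3e^{-\nu t/2}\bigr)$. The extra constraint $\eps_3(\varkappa)\leq\nu/(2q_1)$ ensures $\varkappa q(\alpha)\leq q(\alpha)= q_1\alpha\leq\nu/2$, so $e^{-\nu t/2}\leq e^{-\varkappa q(\alpha)t}$ and the two exponentials merge into $(M_3+M_4(\varkappa))e^{-\varkappa q(\alpha)t}$.

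The main technical obstacle is the Gronwall step in (i): the unperturbed semigroup $e^{t\alpha G_0(0)}$ decays only at rate $\tfrac{1+\varkappa}{2}q(\alpha)$, furnishing merely a safety margin of $\tfrac{1-\varkappa}{2}q(\alpha)$ over the target rate $\varkappa q(\alpha)$, and the Gronwall correction $M_4(\varkappa)\|R_\alpha\|$ must fit inside this margin. This works precisely because Lemma~\ref{l3.12}(iii)---and hence Hypothesis (H5)---guarantees that $\|R_\alpha\|$ is of higher order in $\alpha$ (either $\alpha^2$ or $\alpha r(\alpha)$ with $r(0)=0$) while the margin is linear in $\alpha$, so $\|R_\alpha\|/\alpha$ is a controllable small quantity as $\alpha\to 0$; without this higher-order smallness the argument would collapse, explaining why Hypothesis (H5) is needed for Theorem~\ref{t1.1} but not for the simple-eigenvalue Theorem~\ref{t1.2}.
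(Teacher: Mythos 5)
Your proposal is correct and follows essentially the same route as the paper: a Duhamel--Gronwall perturbation of $e^{t\alpha G_0(0)}$ on the finite-dimensional block using Lemma~\ref{l3.12}(iii), followed by the block-diagonal estimate for $S_\alpha(t)$, the bounds $\|P_0\|\leq 8M_2$ and $\|U(\alpha)\|\,\|U(\alpha)^{-1}\|\leq 3$, and the observation that $\eps_3(\varkappa)\leq\nu/(2q_1)$ lets the $e^{-\nu t/2}$ decay of $\tS_\alpha$ be absorbed into $e^{-\varkappa q(\alpha)t}$. One small internal wrinkle: your Gronwall output carries the factor $M_4(\varkappa)\|R_\alpha\|$ in the exponent, yet the smallness condition you then impose, $\tfrac{8M_2\nu}{\eps_1^2}\alpha+768M_2^2r(\alpha)\leq\tfrac{(1-\varkappa)q_1}{2}$, silently drops the $M_4(\varkappa)\geq1$ factor, so strictly speaking the threshold should be $\eps_2(\varkappa)$ with the two relevant denominators multiplied by $M_4(\varkappa)$ (the paper's own display makes the identical omission, and the fix does not affect the qualitative conclusion since the shrunken threshold is still effectively computable and $\alpha$-independent).
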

\begin{proof}
(i) Fix $\varkappa\in (0,1)$. First, we note that the estimate	from Lemma~\ref{l3.12} (ii) is equivalent to
\begin{equation}\label{l3.14.0}
\|G_0(\alpha)-G_0(0)\|\leq \frac{8M_2\nu}{\eps_1^2}\alpha+768M_2^2 r(\alpha)\quad\mbox{for any}\quad\alpha\in[0,\eps_1].
\end{equation}
From Remark~\ref{r3.13}, \eqref{r3.13.2}, Gronwall's inequality and since
$\alpha\leq\frac{(1-\varkappa)q_1\eps_1^2}{32M_2\nu}$ and $r(\alpha)\leq\frac{(1-\varkappa)q_1}{3072M_2^2}$ for any $\alpha\in[0,\eps_2(\varkappa)]$
we infer that
\begin{equation}\label{l3.14.1}
\big\|e^{tG_0(\alpha)}\big\|\leq M_4(\varkappa)e^{\big(-\frac{1+\varkappa}{2}q_1+\frac{8M_2\nu}{\eps_1^2}\alpha+768M_2^2 r(\alpha)\big)t}\leq M_4(\varkappa)e^{\big(-\frac{1+\varkappa}{2}q_1+\frac{1-\varkappa}{2}q_1\big)t}= M_4(\varkappa)e^{-\varkappa q_1t}
\end{equation}
for any $t\geq 0$, $\alpha\in[0,\eps_2(\varkappa)]$. Since $K_\alpha=\alpha G_0(\alpha)$  by Lemma~\ref{l3.11} and $q(\alpha)=q_1\alpha$ for any $\alpha\in[0,\eps_2(\varkappa)]$ by Hypothesis (H3), using Lemma~\ref{l3.12}(i),  we conclude from \eqref{l3.14.1} that
\begin{equation}\label{l3.14.2}
\big\|e^{tK_\alpha}\big\|=\big\|e^{(t\alpha)G_0(\alpha)}\big\|\leq M_4(\varkappa)e^{-\varkappa q_1\alpha t}=M_4(\varkappa)e^{-\varkappa q(\alpha)t}\;\mbox{for any}\; t\geq 0,\;\alpha\in[0,\eps_2(\varkappa)].
\end{equation}

\noindent (ii) Fix again $\varkappa\in(0,1)$. From Lemma~\ref{l3.10} and  $\eps_3(\varkappa)\leq \min\{q_2,\frac{\nu}{2q_1}\}$, so that $q(\alpha)=q_1\alpha$ for any $\alpha\in[0,\eps_3(\varkappa)]$, we have  
\begin{equation}\label{l3.14-new}
\|\tS_\alpha(t)\|\leq M_3e^{-\frac{\nu}{2}t}=M_3e^{\big(-\frac{\nu}{2}+q_1\alpha\big)t}e^{-q(\alpha)t}\leq M_3e^{-q(\alpha)t}\;\mbox{for any}\; t\geq 0,\;\alpha\in[0,\eps_3(\varkappa)].
\end{equation}
From \eqref{new-l3.9.2}, \eqref{representation-S}, \eqref{l3.14-new} and (i) we conclude that  
\begin{align}\label{l3.15.12}
\|T_\alpha(t)\|&=\|U(\alpha)S_\alpha(t)U(\alpha)^{-1}\|\leq \|U(\alpha)\|\,\|S_\alpha(t)\|\,\|U(\alpha)^{-1}\|\leq 3\|S_\alpha(t)\|\nonumber\\
&\leq3\big(\big\|e^{tK_\alpha}\big\|\,\|P_0\|+\|\tS_\alpha(t)\|\,\|I_\bX-P_0\|\big)\leq 3(8M_2+1)(M_3+M_4(\varkappa))e^{-\varkappa q(\alpha)t}
\end{align}
for any $t\geq 0$, $\alpha\in[0,\eps_3(\varkappa)]$, proving the lemma.
\end{proof}
We conclude this subsection by proving one of our main results, the uniform in $\alpha$ exponential stability of the family of semigroups $\{T_\alpha(t)\}_{t\geq 0}$, $\alpha\geq 0$.
\begin{proof}[Proof of Theorem~\ref{t1.1}]
From Lemma~\ref{l3.2} and Lemma~\ref{l3.14}(ii) we derive
\begin{equation}\label{est-main}
\big\|T_\alpha(t)\big\|\leq \max\{\overline{M}(\varkappa,\eps_3(\varkappa)),3(8M_2+1)(M_3+M_4(\varkappa))\}e^{-\varkappa q(\alpha)t}
\end{equation}
for any $t\geq 0$, $\alpha\geq 0$, $\varkappa\in(0,1)$. Here $\overline{M}(\cdot,\cdot)$, $M_2$, $M_3$, $M_4(\cdot)$ and $\eps_3(\cdot)$ are defined in \eqref{overline-M}, \eqref{l3.5.4}, \eqref{M-3}, \eqref{r3.13.2} and \eqref{def-eps-3}, respectively. Tracing back the dependence of each of these quantities we conclude that estimate \eqref{t1.1.1} holds and that the function $M$ depends on the unperturbed operator $A$, $E_0(0)$ and the functions $\|E(\cdot)\|$, $q$ and $r$ and the relevant constants in Hypotheses (H3)-(H5).
\end{proof}	

\subsection{The special case when $0$ is a simple eigenvalue.}\label{sec3-3}

In this subsection we show that in the case when $0$ is a simple eigenvalue we can prove the main estimate \eqref{est-main} without using Hypothesis (H5). Throughout this subsection we assume  Hypotheses (H1), (H3), (H4) and Hypothesis (H2'). In this case $\mathrm{dim}\,\mathrm{Im}\,P_0=1$. This fact makes estimating $\|e^{tK_\alpha}\|$ significantly simpler. In particular the function $M_4$ in Lemma~\ref{l3.14} can be replaced by $1$, see \eqref{3.23.1} below.
\begin{lemma}\label{l3.23}
Assume Hypotheses (H1) (H2'), (H3) and (H4). Then,  
\begin{equation}\label{est-T-alpha-bis}
\big\|T_\alpha(t)\big\|\leq 3(M_3+1)(8M_2+1)e^{-q(\alpha)t}\quad\mbox{for any}\quad t\geq0, \alpha\in\big[0,\eps_4],
\end{equation}
where $\eps_4:=\min\big\{\eps_1,q_2,\frac{\nu}{2q_1}\big\}$, $M_3$ is taken from Lemma~\ref{l3.10}, $\eps_1$ from Lemma~\ref{new-l3.9}, $q_1$ and $q_2$ from Hypothesis (H3).
\end{lemma}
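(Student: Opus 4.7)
The plan is to reuse the entire spectral decomposition machinery from Section~\ref{sec3-2}, which only requires Hypotheses (H1)--(H4), and to exploit Hypothesis (H2') through the observation that $\dim \mathrm{Im}\,P_0 = 1$. All results from Lemma~\ref{new-l3.6} through Lemma~\ref{l3.10} remain valid verbatim, together with the representation \eqref{representation-S}. Hypothesis (H5) entered in Section~\ref{sec3-2} only to control the $o(\alpha)$ remainder of $K_\alpha$ via Lemma~\ref{l3.12}(iii) and Lemma~\ref{new-l3.16}; when the central block is one dimensional, this control can be replaced by an immediate spectral argument.

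First I would observe that since $\mathrm{Im}\,P_0$ is one dimensional, there is a unique $\mu_\alpha\in\mathbb{C}$ such that $K_\alpha = \mu_\alpha I_{\mathrm{Im}P_0}$. By Remark~\ref{r3.7} and Hypothesis (H3)(i),
\begin{equation*}
\mu_\alpha \in \sigma(K_\alpha) \subseteq \sigma(B_\alpha) = \sigma(A_\alpha) \subseteq \{\lambda\in\mathbb{C} : \mathrm{Re}\,\lambda \leq -q(\alpha)\},
\end{equation*}
so that
\begin{equation}\label{3.23.1}
\|e^{tK_\alpha}\| = e^{t\,\mathrm{Re}\,\mu_\alpha} \leq e^{-q(\alpha)t} \quad \text{for any } t\geq 0,\ \alpha\in[0,\eps_4].
\end{equation}
This is the promised replacement of the function $M_4(\varkappa)$ from Lemma~\ref{l3.14}(i) by the constant $1$, and it sidesteps entirely the continuity of $G$ at $0$ (and hence Hypothesis (H5)) because in the simple eigenvalue case the spectral content of $K_\alpha$ is captured by the single number $\mu_\alpha$.

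Next I would invoke Lemma~\ref{l3.10}, which supplies $\|\tS_\alpha(t)\| \leq M_3 e^{-\nu t/2}$ for $\alpha\in[0,\eps_1]$. Since $\eps_4 \leq \min\{q_2, \nu/(2q_1)\}$, Hypothesis (H3)(ii) gives $q(\alpha) = q_1\alpha \leq \nu/2$ for every $\alpha\in[0,\eps_4]$, so $\|\tS_\alpha(t)\| \leq M_3 e^{-q(\alpha)t}$ on the same range. Combining this with \eqref{3.23.1} and the block diagonal structure of $S_\alpha(t)$ in \eqref{representation-S}, and using $\|P_0\|\leq 8M_2$ (the $\alpha=0$ instance of \eqref{new-l3.9.2}) together with $\|I_{\bX}-P_0\| \leq 8M_2 + 1$, I obtain
\begin{equation*}
\|S_\alpha(t)\| \leq \|P_0\|\,\|e^{tK_\alpha}\| + \|I_{\bX}-P_0\|\,\|\tS_\alpha(t)\| \leq \bigl(8M_2 + (8M_2+1)M_3\bigr) e^{-q(\alpha)t} \leq (M_3+1)(8M_2+1)\, e^{-q(\alpha)t}.
\end{equation*}
Finally, the bounds $\|U(\alpha)\| \leq 3/2$ and $\|U(\alpha)^{-1}\| \leq 2$ from \eqref{new-l3.9.4}, together with $T_\alpha(t) = U(\alpha) S_\alpha(t) U(\alpha)^{-1}$ from \eqref{representation-S}, give $\|T_\alpha(t)\| \leq 3\|S_\alpha(t)\|$, which yields \eqref{est-T-alpha-bis}.

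There is no substantive obstacle in this argument: the uniform sectorial estimate of Lemma~\ref{l3.1}, the construction and invertibility of the transformation operator $U(\alpha)$, and the uniform exponential decay of $\tS_\alpha(t)$ have all been established under Hypotheses (H1)--(H4) alone. The only new ingredient is the trivial but decisive observation \eqref{3.23.1}, which converts the finite dimensional block $K_\alpha$ into a scalar whose real part automatically inherits the uniform spectral bound on $A_\alpha$, rendering Hypothesis (H5) superfluous in this setting.
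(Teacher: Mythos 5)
Your proof is correct and follows essentially the same route as the paper: in the simple-eigenvalue case $K_\alpha$ is a scalar multiple of the identity whose real part inherits the bound $-q(\alpha)$ from the spectral inclusion $\sigma(K_\alpha)\subseteq\sigma(A_\alpha)$, which replaces $M_4(\varkappa)$ by $1$ and makes Hypothesis (H5) unnecessary. The remaining steps (Lemma~\ref{l3.10} for $\tS_\alpha$, the block structure of $S_\alpha(t)$, and the conjugation by $U(\alpha)$ with the bounds from \eqref{new-l3.9.2} and \eqref{new-l3.9.4}) match the paper's argument exactly.
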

\begin{proof} 
Since $\dim\mathrm{Im}\,P_0=1$, there exists a function $k:[0,\eps_1]\to\CC$ such that 
\begin{equation}\label{l3.20.1}	
K_\alpha=k(\alpha) I_{\mathrm{Im}\,P_0}\quad\mbox{for any}\quad \alpha\in[0,\eps_1].
\end{equation}	
Since $K_\alpha=(B_\alpha)_{|\mathrm{Im}P_0}$, from \eqref{r3.7.1}, \eqref{def-tilde-B-K} and \eqref{l3.20.1} it follows that $k(\alpha)\in\sigma(B_\alpha)=\sigma(A_\alpha)$ for any $\alpha\in[0,\eps_1]$. From Hypothesis (H3) (i) we infer that 
\begin{equation}\label{3.22.10}
\mathrm{Re}\,k(\alpha)\leq-q(\alpha)\quad\mbox{for any}\quad\alpha\in[0,\eps_1].
\end{equation}	
From \eqref{l3.20.1} and \eqref{3.22.10} we immediately conclude that 
\begin{equation}\label{3.23.1}
\|e^{tK_\alpha}\|=e^{\mathrm{Re}\,k(\alpha)t}\leq e^{-tq(\alpha)}\quad\mbox{for any}\quad t\geq0, \alpha\in[0,\eps_1]. 
\end{equation}
Since $\eps_4\leq\min\big\{q_2,\frac{\nu}{2q_1}\big\}$, and so $q(\alpha)=q_1\alpha$ for any $\alpha\in[0,\eps_4]$, it follows from Lemma~\ref{l3.10} that
\begin{equation}\label{l3.23-new}
\|\tS_\alpha(t)\|\leq M_3e^{-\frac{\nu}{2}t}=M_3e^{\big(-\frac{\nu}{2}+q_1\alpha\big)t}e^{-q(\alpha)t}\leq M_3e^{-q(\alpha)t}\;\mbox{for any}\; t\geq 0,\;\alpha\in[0,\eps_4].
\end{equation}	
From Lemma~\ref{l3.10}, \eqref{representation-S} and \eqref{3.23.1} we infer  
\begin{align}\label{l3.15.124}
\|T_\alpha(t)\|&=\|U(\alpha)S_\alpha(t)U(\alpha)^{-1}\|\leq \|U(\alpha)\|\,\|S_\alpha(t)\|\,\|U(\alpha)^{-1}\|\leq 3\|S_\alpha(t)\|\nonumber\\
&\leq3\big(\big\|e^{tK_\alpha}\big\|\,\|P_0\|+\|\tS_\alpha(t)\|\,\|I_\bX-P_0\|\big)\leq 3(M_3+1)(8M_2+1)e^{-q(\alpha)t}
\end{align}
for any $t\geq 0$, $\alpha\in[0,\eps_1]$, proving the lemma.
\end{proof}
\begin{proof}[Proof of Theorem~\ref{t1.2}]
From  Lemma~\ref{l3.2} and Lemma~\ref{l3.23} it follows that 
\begin{equation}\label{est-main-bis}
\big\|T_\alpha(t)\big\|\leq \max\{\overline{M}(\varkappa,\eps_4),3(M_3+1)(8M_2+1)\}e^{-\varkappa q(\alpha)t}\;\mbox{for any}\; t\geq 0,\;\alpha\geq 0,\;\varkappa\in(0,1).
\end{equation}
Here $\overline{M}(\cdot)$, $M_2$, $M_3$, and $\eps_1$ are defined in \eqref{overline-M}, \eqref{l3.5.4}, \eqref{M-3} and Lemma~\ref{l3.23}, respectively. Arguing the same way as in the proof of Theorem~\ref{t1.1}, we conclude that estimate \eqref{t1.2.1} holds
and that the function $N$ depends on the unperturbed operator $A$ and the functions $\|E(\cdot)\|$ and $q$ and the relevant constants in Hypotheses (H3)-(H4).  
\end{proof}

\section{Applications to linear stability of planar traveling waves in reaction-diffusion systems}\label{s4}
In this section we give an application of our results to the case of families of analytic semigroups obtained by linearizing a reaction-diffusion system along a planar traveling wave (front). In particular, we give sufficient conditions for Lyapunov linear stability of such fronts, proving Proposition~\ref{p1.3}. First, we recall the reaction-diffusion system \eqref{RD-Sys} which reads as follows:
\begin{equation*}
u_t=D\Delta_xu+F(u),\; t\geq 0,\;x=(x_1,\dots,x_m)^{\mathrm{T}}\in\RR^m.
\end{equation*}
Here $F:\RR^k\to\RR^k$ is a function of class at least $\mathcal{C}^3$ and $D\in\CC^{k\times k}$ is a matrix satisfying the condition
\begin{equation}\label{RD-Sys-1}
\inf\mathrm{Re}\,\sigma(D)>0.   
\end{equation}
We recall that a planar traveling wave of \eqref{RD-Sys} is a solution of \eqref{RD-Sys} of the form $u(x,t)=\ohh(x_1-ct)$, where $c\in\RR$ and $\ohh:\RR\to\RR^k$ is a smooth function exponentially convergent at $\pm\infty$ to the limit values $\ohh_\pm$. One can readily check that the profile $\ohh$ satisfies the nonlinear system of equations
\begin{equation}\label{RD-Sys-TW}
D\ohh''+c\ohh'+F(\ohh)=0.   
\end{equation} 
Making the change of variables $y=x-ct\mathrm{\mathbf{e_1}}$, where $\mathrm{\mathbf{e_1}}=(1,0,\dots,0)^{\mathrm{T}}\in\RR^m$, we notice that equation \eqref{RD-Sys} is equivalent to
\begin{equation}\label{RD-Sys-2}
u_t=D\Delta_yu+c\partial_{y_1}u+F(u),\; t\geq 0,\;y=(y_1,\dots,y_m)\in\RR^m.
\end{equation}
We note that $\ohh$ is a standing wave solution of \eqref{RD-Sys-2} depending only on $y_1$. The linearization of \eqref{RD-Sys-2} along $\ohh$ reads as follows,   
\begin{equation}\label{RD-Sys-3}
u_t=\cL u,\;t\geq0,\;\quad\mbox{where}\quad \cL=D\Delta_y+cI_k\partial_{y_1}+\cM_{F'(\ohh)}.
\end{equation}
We recall that $\cM_{F'(\ohh)}$ denotes the operator of multiplication on $L^2(\RR^m,\CC^k)$ by the bounded, matrix valued function $F'(\ohh(y_1))$, while $\cL$ is considered as a closed, densely defined linear operator on $L^2(\RR^m,\CC^k)$ with domain $H^2(\RR^m,\CC^k)$. Moreover, as mentioned in the introduction, by taking Fourier transform in the variables $(y_2,\dots,y_m)\in\RR^{m-1}$, we infer that the linear operator $\cL$ is unitary equivalent to $\cM_{\hatt L}$, the operator of multiplication acting on $L^2\big(\RR^{m-1},L^2(\RR,\CC^k)\big)$ by the operator valued function     
\begin{equation}\label{hat-A}
\hatt L:\RR^{m-1}\to\mathcal{B}\big(H^2(\RR,\CC^k),L^2(\RR,\CC^k)\big),\;\hatt L(\xi)=D\partial_{y_1}^2+cI_k\partial_{y_1}+\cM_{V(\cdot,\xi)},
\end{equation} 
where $V:\RR^m\to\RR^k$ is defined by
\begin{equation}\label{def-V-0}
V(y_1,\xi)=F'(\ohh(y_1))-|\xi|^2D.
\end{equation}
For any $\xi\in\RR^{m-1}$ the linear operator $\hatt L(\xi)$ can be considered as a closed, densely defined linear operator on $L^2(\RR,\CC^k)$ with domain $H^2(\RR,\CC^k)$. 

It is well-known that elliptic operators generate analytic semigroups, see, e.g.,  \cite{Agmon,ABHN,Henri,lunardi,P,Stewart1, Stewart2}. In the next lemma we show that condition \eqref{RD-Sys-1} is enough to infer the analyticity of the semigroup generated by $\hatt L(0)$.  
\begin{lemma}\label{l4.1} If $\inf\mathrm{Re}\,\sigma(D)>0$ then the linear operator $\hatt L(\xi)$, defined in \eqref{hat-A}, is sectorial, for any $\xi\in\RR^{m-1}$, hence it generates an analytic semigroup. In particular, $\hatt L(0)$ satisfies Hypothesis (H1).
\end{lemma}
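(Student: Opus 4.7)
The plan is to split $\hatt L(\xi)$ into a principal part that is sectorial by elementary Fourier analysis and a lower-order remainder that can be absorbed by standard perturbation theory.

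First I would write $\hatt L(\xi) = L_0 + B(\xi)$, where $L_0 := D\partial_{y_1}^2$ on $H^2(\RR,\CC^k)$ and $B(\xi) := cI_k\partial_{y_1} + \cM_{V(\cdot,\xi)}$. Since $F$ is of class $\mathcal{C}^3$ and $\ohh$ is smooth with finite limits at $\pm\infty$, the matrix valued function $F'(\ohh)$ is bounded, and $-|\xi|^2D$ is constant in $y_1$, so $\cM_{V(\cdot,\xi)}$ is a bounded operator on $L^2(\RR,\CC^k)$. The first order term $cI_k\partial_{y_1}$ is $L_0$-bounded with relative bound zero, thanks to the interpolation inequality $\|\partial_{y_1}u\|_{L^2}\le\varepsilon\|\partial_{y_1}^2 u\|_{L^2}+C_\varepsilon\|u\|_{L^2}$ (valid for any $\varepsilon>0$) combined with the invertibility of $D$, which implies $\|\partial_{y_1}^2 u\|_{L^2}\le\|D^{-1}\|\|L_0 u\|_{L^2}$.

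To establish sectoriality of $L_0$, I would conjugate by the Fourier transform $\cF_1$ in the $y_1$ variable, under which $L_0$ is unitarily equivalent to the multiplication operator $\cM_{-\eta^2 D}$ on $L^2(\RR,\CC^k)$. The hypothesis $\inf\mathrm{Re}\,\sigma(D)>0$ implies that $\theta_D:=\max_{\mu\in\sigma(D)}|\mathrm{arg}\,\mu|\in[0,\pi/2)$, so a standard finite dimensional resolvent estimate (accommodating any Jordan blocks via a Dunford integral or via an equivalent norm on $\CC^k$) yields a constant $C_D>0$ with $\|(\lambda I_k+D)^{-1}\|\le C_D/|\lambda|$ for every $\lambda\in\Omega_{0,\pi-\theta_D}$. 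Since $\Omega_{0,\pi-\theta_D}$ is invariant under positive dilations, substituting $\lambda\mapsto\lambda/\eta^2$ gives $\|(\lambda I_k+\eta^2 D)^{-1}\|\le C_D/|\lambda|$ uniformly in $\eta\in\RR\setminus\{0\}$, while the case $\eta=0$ contributes the trivial bound $1/|\lambda|$. Taking the supremum over $\eta$ in the multiplier norm I obtain
\begin{equation*}
\|R(\lambda,L_0)\|\le\frac{\max(1,C_D)}{|\lambda|}\quad\text{for every}\quad \lambda\in\Omega_{0,\pi-\theta_D},
\end{equation*}
so $L_0$ is sectorial in the sense of \eqref{sectorial-B} with vertex $a=0$ and angle $\theta=\pi-\theta_D\in(\pi/2,\pi)$.

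Finally, I would invoke a classical perturbation result for sectorial operators (e.g.\ Lunardi, Proposition~2.4.3): adding to a sectorial operator an $L_0$-bounded perturbation with sufficiently small relative bound yields again a sectorial operator, possibly with a shifted vertex and a slightly smaller opening angle that still exceeds $\pi/2$. Applied to $B(\xi)$, whose $L_0$-bound is zero by the interpolation estimate above, this gives that $\hatt L(\xi)=L_0+B(\xi)$ is sectorial for every $\xi\in\RR^{m-1}$, hence generates an analytic semigroup on $L^2(\RR,\CC^k)$; specializing to $\xi=0$ produces constants $a\in\RR$, $\theta\in(\pi/2,\pi)$ and $M_0>0$ verifying Hypothesis (H1). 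The only delicate step is the uniform-in-$\eta$ matrix resolvent estimate, which is the reason $D$ is allowed to be non-normal and possibly non-diagonalizable; the sector separation argument must be made compatible with Jordan structure, but this is standard.
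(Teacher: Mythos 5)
Your proposal is correct and follows essentially the same route as the paper: conjugate $D\partial_{y_1}^2$ by the Fourier transform, obtain a uniform-in-frequency resolvent bound for $-\xi_1^2 D$ by exploiting the dilation invariance of a sector with vertex $0$, control $cI_k\partial_{y_1}$ by the interpolation inequality so that it has relative bound zero, and absorb the bounded potential $\cM_{V(\cdot,\xi)}$ by perturbation (the paper does the last two steps separately, via Lunardi and its Lemma~\ref{l2.10}, and obtains the matrix resolvent bound on a half-plane from the Laplace transform of $e^{-tD}$ before upgrading to a sector with Lemma~\ref{lemma-Lunardi}, rather than directly from the spectral angle as you do). One small imprecision: your claimed estimate $\|(\lambda I_k+D)^{-1}\|\leq C_D/|\lambda|$ cannot hold on the full open sector $\Omega_{0,\pi-\theta_D}$ when some eigenvalue $\mu$ of $D$ realizes $|\mathrm{arg}\,\mu|=\theta_D$, since then $-\mu$ lies on the boundary of that sector and the resolvent blows up at nearby interior points while $|\lambda|$ stays bounded away from $0$; you must take the opening angle strictly smaller than $\pi-\theta_D$ (still larger than $\pi/2$), after which the dilation argument and the rest of the proof go through unchanged.
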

\begin{proof}
Let $d_0=\inf\mathrm{Re}\,\sigma(D)>0$. First, we show that the linear operator $D\partial_{y_1}^2$ is sectorial. Denoting by $\mathcal{F}_1$	the Fourier Transform with respect to the variable $y_1\in\RR$, one can readily check that 
\begin{equation}\label{l4.1-new}
D\partial_{y_1}^2=\mathcal{F}_1^{-1}M_{\hatt D}\mathcal{F}_1,\;\mbox{where}\;\hatt D:\RR\to\CC^{k\times k}\;\mbox{is defined by}\; \hatt D(\xi_1)=-\xi_1^2D.
\end{equation}
Since $D\in\CC^{k\times k}$ we have  
\begin{equation}\label{l4.1.1}
\omega_0(-D)=\sup\mathrm{Re}\,\sigma(-D)=-\inf\mathrm{Re}\,\sigma(D)=-d_0<0.
\end{equation}
From the definition of the growth rate of a semigroup we obtain that there exists $C_0>0$ such that 
\begin{equation}\label{l4.1.2}
\|e^{-tD}\|\leq C_0e^{-\frac{d_0}{2}t}\quad\mbox{for any}\quad t\geq0.
\end{equation}
Since $\{\lambda\in\CC:\mathrm{Re}\,\lambda\geq-\frac{d_0}{2}\}\subset\rho(-D)$ we have 
\begin{equation}\label{l4.1.3}
\lambda(\lambda I_k+D)^{-1}=I_k-D(\lambda I_k+D)^{-1}=I_k-D\int_0^\infty e^{-\lambda t}e^{-tD}\rmd t\quad\mbox{whenever}\quad \mathrm{Re}\,\lambda>-\frac{d_0}{2}. 
\end{equation}
Therefore, from \eqref{l4.1.2} and \eqref{l4.1.3} we immediately obtain   
\begin{equation}\label{l4.1.4}
\sup_{\mathrm{Re}\,\lambda\geq0}\|\lambda(\lambda I_k+D)^{-1}\|\leq1+\frac{2\|D\|C_0}{d_0}<\infty.
\end{equation}
From Lemma~\ref{lemma-Lunardi} it follows that there exists $\tC_0\geq 1$ and $\varphi_0\in(\frac{\pi}{2},\pi)$ such that 
\begin{equation}\label{l4.1.5}
\Omega_{0,\varphi_0}\subset\rho(-D),\quad\|(\lambda I_k+D)^{-1}\|\leq\frac{\tC_0}{|\lambda|}\quad\mbox{for any}\quad\lambda\in\Omega_{0,\varphi_0}.
\end{equation}
Since $\frac{\lambda}{\xi_1^2}\in\Omega_{0,\varphi_0}$ for any $\lambda\in\Omega_{0,\varphi_0}$ and $\xi_1\in\RR\setminus\{0\}$, from \eqref{l4.1-new} and \eqref{l4.1.5} we conclude that 
\begin{equation}\label{l4.1.6}
\Omega_{0,\varphi_0}\subset\rho\big(\hatt D(\xi_1)\big),\quad\big\|\big(\lambda I_k-\hatt D(\xi_1)\big)^{-1}\big\|\leq\frac{\tC_0}{|\lambda|}\quad\mbox{for any}\quad\lambda\in\Omega_{0,\varphi_0},\,\xi_1\in\RR, 
\end{equation}
which is equivalent to 
\begin{equation}\label{l4.1.7}
\Omega_{0,\varphi_0}\subset\rho(M_{\hatt D}),\quad\|R(\lambda, M_{\hatt D})\|\leq\frac{\tC_0}{|\lambda|}\quad\mbox{for any}\quad\lambda\in\Omega_{0,\varphi_0}.
\end{equation}
From \eqref{l4.1.7} we infer that the linear operator $M_{\hatt D}$, and hence $D\partial_{y_1}^2$, is a sectorial operator. 
Since $D\in\CC^{k\times k}$ is an invertible matrix one can readily check that 
\begin{equation}\label{l4.1.8}
\|\partial_{y_1}u\|_2^2=-\langle \partial_{y_1}^2u,u\rangle_{L^2(\RR,\CC^k)}=-\langle D\partial_{y_1}^2u,(D^{-1})^*u\rangle_{L^2(\RR,\CC^k)}\leq\|D^{-1}\|\,\|D\partial_{y_1}^2u\|_2\,\|u\|_2\
\end{equation}
for any $u\in H^2(\RR,\CC^k)$, which implies that 
\begin{equation}\label{l4.1.9}
\|\partial_{y_1}u\|_2\leq\|D^{-1}\|^{1/2}\,\|D\partial_{y_1}^2u\|_2^{1/2}\,\|u\|_2^{1/2}\leq\|D^{-1}\|^{1/2}\,\|u\|_{\dom(D\partial_{y_1}^2)}^{1/2}\,\|u\|_2^{1/2}
\end{equation}
for any $u\in\dom(D\partial_{y_1}^2)=H^2(\RR,\CC^k)$. Applying the results from \cite[Chapter 2]{lunardi}, we infer that $D\partial_{y_1}^2+cI_k\partial_{y_1}$ is also a sectorial operator. Since $\ohh$, and thus $V$, are bounded functions, from Lemma~\ref{l2.10} and \eqref{hat-A} we conclude that $\hatt L(\xi)$ is a sectorial operator for any $\xi\in\RR^{m-1}$, proving the lemma. 
\end{proof}
Next, we note that the operator valued function $\hatt L$ has the representation
\begin{equation}\label{hat-A-representation}
\hatt L(\xi)=\hatt L(0)+E_{\mathrm{RD}}(|\xi|^2)\;\mbox{for any}\;\xi\in\RR^{m-1},
\end{equation}
where $E_{\mathrm{RD}}:[0,\infty)\to\mathcal{B}\big(L^2(\RR,\CC^k)\big)$, is defined by $E_{\mathrm{RD}}(\alpha)=-\alpha \cM_D$. 

\begin{remark}\label{r4.2} Assuming Hypothesis (RD) from the Introduction and \eqref{RD-Sys-1}, from Lemma~\ref{l4.1} we derive that $\hatt L(0)$ satisfies Hypotheses (H1)-(H2). Hence, it follows from Lemma~\ref{l3.5} that the semigroup generated by $\hatt L(0)$ has a block representation of the form \eqref{representation-A-T} and satisfies the estimate \eqref{est-sem-tilde-A}. We infer that the semigroup generated by $\hatt L(0)$ is bounded, that is $\tM_0=\sup_{t\geq 0}\|e^{t\hatt L(0)}\|<\infty$, which implies that 
\begin{equation}\label{r4.2.1}
\|R(\lambda,\hatt L(0))\|\leq \frac{\tM_0}{\mathrm{Re}\,\lambda}\;\mbox{whenver}\;\mathrm{Re}\,\lambda>0.
\end{equation}	 
\end{remark}
In many applications, the diffusion rates of various components of the vector valued function $u$ in \eqref{RD-Sys} are close to each other. In this case we can prove the Lyapunov linear stability of the front $\ohh$ using the results of Section~\ref{s3}. First, we prove the following lemma.
\begin{lemma}\label{l4.3}
Assume Hypothesis (RD) and that the matrix $D\in\CC^{k\times k}$ is sufficiently close to a diagonal matrix in the sense that there exists some $d>0$ such that for $\tM_0$ from \eqref{r4.2.1} one has,
\begin{equation}\label{D-condition}
\|D-dI_k\|<\frac{d}{\tM_0}.
\end{equation}
Then, the family of operators $A_{\mathrm{RD}}(\alpha):=\hatt L(0)-\alpha \cM_{D}$, $\alpha\geq0$, satisfies Hypotheses (H1)-(H5). 	
\end{lemma}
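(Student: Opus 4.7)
The plan is to verify the five hypotheses in turn; three of them are essentially immediate. Hypothesis~(H1) follows from Lemma~\ref{l4.1}, while Hypothesis~(H2) is exactly the content of Hypothesis~(RD). For (H4) and (H5), note that $E_0(\alpha):=E_{\mathrm{RD}}(\alpha)/\alpha=-\cM_D$ is constant in $\alpha$, hence bounded on $(0,1]$ and extending continuously at $\alpha=0$ with $E_0(\alpha)-E_0(0)\equiv 0$; thus any continuous increasing $r$ with $r(0)=0$ and $\lim_{\alpha\to\infty}r(\alpha)=\infty$ (for instance $r(\alpha)=\alpha$) satisfies~\eqref{E0-bound}. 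Operator-norm continuity of $E_{\mathrm{RD}}$ is immediate from its linearity in $\alpha$.

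The heart of the argument is (H3), and the idea is to exploit the closeness assumption~\eqref{D-condition} by splitting the perturbation into a scalar shift plus a small correction:
\[
A_{\mathrm{RD}}(\alpha)=\bigl(\hatt L(0)-\alpha d\,I\bigr)+\alpha W,\qquad W:=-\cM_{D-dI_k},\qquad \|W\|<d/\tM_0.
\]
The scalar shift merely translates $\sigma(\hatt L(0))$ by $-\alpha d$, and Remark~\ref{r4.2} gives $\|R(\lambda,\hatt L(0)-\alpha d\,I)\|=\|R(\lambda+\alpha d,\hatt L(0))\|\leq \tM_0/(\mathrm{Re}\,\lambda+\alpha d)$ for $\mathrm{Re}\,\lambda>-\alpha d$. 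Fix any $q_1\in(0,d-\|W\|\tM_0)$, which is a nonempty interval by~\eqref{D-condition}. For $\mathrm{Re}\,\lambda>-q_1\alpha$ one has $\mathrm{Re}\,\lambda+\alpha d>(d-q_1)\alpha$, so
\[
\bigl\|\alpha W\,R(\lambda,\hatt L(0)-\alpha d\,I)\bigr\|\leq \frac{\alpha\|W\|\tM_0}{(d-q_1)\alpha}=\frac{\|W\|\tM_0}{d-q_1}=:c<1,
\]
and a standard Neumann-series argument places $\lambda\in\rho(A_{\mathrm{RD}}(\alpha))$ with $\|R(\lambda,A_{\mathrm{RD}}(\alpha))\|\leq \tM_0/\bigl((1-c)(\mathrm{Re}\,\lambda+\alpha d)\bigr)\leq M_1/(\mathrm{Re}\,\lambda+q_1\alpha)$, where $M_1:=\tM_0/(1-c)$, using $\alpha d\geq q_1\alpha$. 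This establishes (H3)(i) with $q(\alpha):=q_1\alpha$.

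With $q(\alpha):=q_1\alpha$ for all $\alpha\geq 0$, Hypothesis~(H3)(ii) is automatic (any $q_2>0$ works), and (H3)(iii) follows from $\|E_{\mathrm{RD}}(\alpha)\|/q(\alpha)=\|D\|/q_1$. The only nontrivial ingredient in the whole argument is the perturbative splitting prompted by~\eqref{D-condition}; once that is in place, the rest reduces to Remark~\ref{r4.2} and a standard Neumann-series computation.
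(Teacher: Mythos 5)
Your proof is correct and follows essentially the same route as the paper: both split $D=dI_k+(D-dI_k)$, absorb the scalar part as a shift of $\hatt L(0)$, and run a Neumann-series argument off the resolvent bound \eqref{r4.2.1} to obtain (H3)(i) with a linear $q$ (your $q_1\in(0,d-\|W\|\tM_0)$ plays the role of the paper's $\delta_0 d$), while (H4)--(H5) are immediate since $E_0$ is constant. The one small step the paper spells out and you skip is that \eqref{D-condition} together with $\tM_0\geq 1$ forces $\inf\mathrm{Re}\,\sigma(D)>0$, which is the precondition needed to invoke Lemma~\ref{l4.1} and Remark~\ref{r4.2} in the first place.
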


\begin{proof}
Denoting by $J=D-dI_k\in\CC^{k\times k}$ we have $\|J\|<\frac{d}{\tM_0}$. One can readily check that $\sigma(J)\subset D(0,\frac{d}{\tM_0})$. Since $\tM_0=\sup_{t\geq 0}\|e^{t\hatt L(0)}\|\geq 1$, we have  $\inf\mathrm{Re}\,\sigma(J)>-d$, which implies that $\inf\mathrm{Re}\,\sigma(D)>0$. From Remark~\ref{r4.2} we conclude that the linear operator $\hatt L(0)$ satisfies Hypotheses (H1) and (H2). 

Next, we study the spectrum of $A_{\mathrm{RD}}(\alpha)$. Since $\frac{\|J\|\tM_0}{d}<1$, there exists $\delta_0\in(0,1)$ such that $\frac{\|J\|\tM_0}{d}<(1-\delta_0)^2$. Next, we fix $\alpha\geq0$ and $\lambda\in\CC$ with $\mathrm{Re}\lambda>-\delta_0\alpha d$. Then, from Hypothesis (RD) we note that $\lambda+\alpha d\in\rho\big(\hatt L(0)\big)$. Moreover,  
since $D=dI_k+J$ we have  
\begin{align}\label{l4.3.1}
&\lambda I_{L^2(\RR,\CC^k)}-A_{\mathrm{RD}}(\alpha)=(\lambda+\alpha d)I_{L^2(\RR,\CC^k)}-\hatt L(0)+ \alpha \cM_{J}\nonumber\\&\qquad=\Big(I_{L^2(\RR,\CC^k)}+\alpha \cM_{J}R\big(\lambda+\alpha d,\hatt L(0)\big)\Big)\big((\lambda+\alpha d) I_{L^2(\RR,\CC^k)}-\hatt L(0)\big).
\end{align}
From \eqref{r4.2.1} we obtain  
\begin{equation}\label{l4.3.2}
\big\|\alpha \cM_{J}R\big(\lambda+\alpha d,\hatt L(0)\big)\big\|\leq\frac{\alpha\|J\|\tM_0}{\mathrm{Re}\lambda+\alpha d}\leq\frac{\|J\|\tM_0}{(1-\delta_0) d}\leq1-\delta_0<1,
\end{equation}
which implies that $I_{L^2(\RR,\CC^k)}+\alpha \cM_{J}R\big(\lambda+\alpha d,\hatt L(0)\big)$ is invertible and 
\begin{equation}\label{l4.3.3}
\Big\|\Big(I_{L^2(\RR,\CC^k)}+\alpha \cM_{J}R\big(\lambda+\alpha d,\hatt L(0)\big)\Big)^{-1}\Big\|\leq\frac{1}{\delta_0}.
\end{equation}
From \eqref{l4.3.1} and \eqref{l4.3.3} we infer that $\lambda\in\rho\big(A_{\mathrm{RD}}(\alpha)\big)$ and
\begin{align}\label{l4.3.4}
\big\|R\big(\lambda,A_{\mathrm{RD}}(\alpha)\big)\big\|&\leq\big\|R\big(\lambda+\alpha d,\hatt L(0)\big)\big\|\,\Big\|\Big(I_{L^2(\RR,\CC^k)}+\alpha \cM_{J}R\big(\lambda+\alpha d,\hatt L(0)\big)\Big)^{-1}\Big\|\nonumber\\
&\leq\frac{\tM_0}{\delta_0(\mathrm{Re}\lambda+\alpha d)}\leq\frac{\tM_0}{\delta_0(\mathrm{Re}\lambda+\delta_0\alpha d)}.
\end{align}
We conclude that  
\begin{equation}\label{l4.3.5}
\sigma\big(A_{\mathrm{RD}}(\alpha)\big)\subseteq\{\lambda\in\CC:\mathrm{Re}\lambda\leq-q(\alpha)\}\;\mbox{and}\;\big\|R\big(\lambda,A_{\mathrm{RD}}(\alpha)\big)\big\|\leq\frac{\tM_0}{\delta_0(\mathrm{Re}\lambda+q(\alpha))}
\end{equation}
whenever $\mathrm{Re}\lambda>-q(\alpha)$, where the function $q:[0,\infty)\to[0,\infty)$ is defined by $q(\alpha)=\delta_0\alpha d$. Since $\lim_{\alpha\to\infty}\frac{\|E_{\mathrm{RD}}(\alpha)\|}{q(\alpha)}=\frac{\|D\|}{\delta_0d}$, Hypothesis (H3) is satisfied. From \eqref{hat-A-representation} one can readily check that Hypotheses (H4) and (H5) are satisfied. 
\end{proof}
We are now ready to prove the main result of this section, Proposition~\ref{p1.3}, assuming that $D\in\CC^{k\times k}$ is sufficiently close to $dI_k$ in the sense that $\|D-dI_k\|<\frac{d}{\tM_0}$

\begin{proof}[Proof of Proposition~\ref{p1.3}] First, we recall that $\tM_0=\sup_{t\geq0}\|e^{t\hatt L(0)}\|$.
From Lemma~\ref{l4.3} and Theorem~\ref{t1.1} it follows that the family of semigroups generated by $A_{\mathrm{RD}}(\alpha)$, $\alpha\geq 0$, is uniformly stable. Setting $\varkappa=\frac{1}{2}$ in Theorem~\ref{t1.1} we obtain that there exists a constant $\hatt M_0>0$ such that 
\begin{equation}\label{t4.4-est}
\|e^{tA_{\mathrm{RD}}(\alpha)}\|\leq\hatt M_0e^{-\frac{\delta_0\alpha d}{2}t}\;\mbox{for any}\;t\geq0,\, \alpha\geq 0.
\end{equation}
Using the identity $\hatt L(\xi)=A_{\mathrm{RD}}(|\xi^2|)$ for any $\xi\in\RR^{m-1}$, from \eqref{t4.4-est} we infer that
\begin{equation}\label{t4.4-est-bis}
\|e^{t\hatt L(\xi)}\|\leq\hatt M_0e^{-\frac{\delta_0|\xi|^2d}{2}t}\;\mbox{for any}\;t\geq0,\,\xi\in\RR^{m-1}.
\end{equation}
Since $\cL$, the linearization along the front $\ohh$, is unitary equivalent to $\cM_{\hatt L}$, from \eqref{t4.4-est-bis} we see that the semigroup generated by $\cL$ is bounded, proving that the planar front $\ohh$ is Lyapunov linearly stable.
\end{proof}

\section{Applications to linear stability of planar fronts in the bidomain equation }\label{s5}
In this section we give yet another application of our results to the Lypunov linear stability of planar traveling waves (fronts) in the bidomain Allen-Cahn model \eqref{bidomain}. In \cite{MaMo} the authors show that the model has \textit{spectrally stable} planar traveling waves (fronts). Our goal is to show that the same condition also guarantees the Lyapunov linear stability of such fronts. Following \cite{MaMo} we consider equation \eqref{bidomain} on $\RR^2$. It is well-known that \eqref{bidomain} has planar traveling waves, that is, solutions of the form
\begin{equation}\label{bidomain-TW}
(u,u_i,u_e)(x,t)=(\ow,\ow_i,\ow_e)(x_1\cos\gamma+x_2\sin\gamma-ct), \;x=(x_1,x_2)\in\RR^2, t\geq 0,
\end{equation}
for some $c,\gamma\in\RR$. Moreover, the profile $\ow$ can be chosen such that 
\begin{equation}\label{bidomain-TW-2}
\ow\;\mbox{is decreasing},\;\lim_{s\to-\infty}e^{-\rho_0 s}(\ow(s)-1)=\lim_{s\to\infty}e^{\rho_0 s}\ow(s)=0,\;\mbox{for some}\; \rho_0>0.
\end{equation}
Making in \eqref{bidomain} the change of variables
\begin{equation}\label{var-change-bidomain}
y=\left(\begin{matrix}y_1\\
y_2
\end{matrix}\right):=R_{-\gamma}\left(\begin{matrix}x_1\\
x_2
\end{matrix}\right)-\left(\begin{matrix}ct\\
0
\end{matrix}\right),\quad\mbox{where}\quad R_\gamma=\begin{bmatrix} \cos\gamma  &
-\sin\gamma\\
\sin\gamma& \cos\gamma\end{bmatrix},
\end{equation}
we obtain the system
\begin{equation}\label{bidomain-var-change}
\left\{\begin{array}{ll}
u_t=\nabla_y\cdot(A_{i,\gamma}\nabla_y u_i)+c\partial_{y_1}u+f(u),\\
\nabla_y\cdot(A_{i,\gamma}\nabla_y u_i+A_{e,\gamma}\nabla_y u_e)=0,\\
u=u_i-u_e.\end{array}\right.t\geq0,\; y\in\RR^2.
\end{equation} 
Here, the symmetric, positive definite matrices $A_{i,\gamma}$ and $A_{e,\gamma}$ are defined by
\begin{equation}\label{A-i-e-gamma}
A_{i,\gamma}=R_\gamma A_i R_{-\gamma},\quad A_{e,\gamma}=R_\gamma A_e R_{-\gamma}.
\end{equation}
In the new coordinate system the front $(\ow,\ow_i,\ow_e)$ travels along the $y_1$-axis and it is a standing wave solution of \eqref{bidomain-var-change} depending only on $y_1$. In addition, the second and third equation of \eqref{bidomain-var-change} are linear equations. Therefore, the linearization  of \eqref{bidomain-var-change} along the front $(\ow,\ow_i,\ow_e)$ is given by 
\begin{equation}\label{bidomain-linear}
\left\{\begin{array}{ll}
v_t=\nabla_y\cdot(A_{i,\gamma}\nabla_y v_i)+c\partial_{y_1}v+f'(\ow(y_1))v,\\
\nabla_y\cdot(A_{i,\gamma}\nabla_y v_i+A_{e,\gamma}\nabla_y v_e)=0,\\
v=v_i-v_e.\end{array}\right.t\geq0,\; y\in\RR^2
\end{equation} 
Following \cite{MaMo}, we consider this system in $L^2(\RR^2,\CC^3)$. Taking Fourier Transform in $y=(y_1,y_2)$, denoted by $\cF$, we can eliminate the variables $v_i$ and $v_e$ from \eqref{bidomain-linear} to obtain the following equation
\begin{equation}\label{bidomain-linear-reduced}
v_t=\cA v,\;t\geq0,\;\quad\mbox{where}\quad \cA=-\cL_\gamma +c\partial_{y_1}+\cM_{f'(\ow)}.
\end{equation}
The linear operator $\cL_\gamma:H^2(\RR^2)\to L^2(\RR^2)$ is defined as the Fourier multiplier 
\begin{equation}\label{def-CL-gamma}
\cL_\gamma=\cF^{-1}\cM_{Q_\gamma}\cF,\;Q_\gamma(\xi)=\frac{Q_{i,\gamma}(\xi)Q_{e,\gamma}(\xi)}{Q_{i,\gamma}(\xi)+Q_{e,\gamma}(\xi)},\;Q_{i/e,\gamma}(\xi)=\xi^{\mathrm{T}}\cdot A_{i/e,\gamma}\xi,\;\xi\in\RR^2. 
\end{equation}
We recall that the wave $\ow$ is called Lyapunov linearly stable if the semigroup generated by $\cA$ is bounded.

Taking Fourier Transform with respect to $y_2\in\RR$, denoted by $\cF_2$, we note that the linear operator $\cA$ is unitary equivalent to $\cM_{\hatt A}$, the operator of multiplication on $L^2\big(\RR,L^2(\RR)\big)$ by the operator valued function     
\begin{equation}\label{def-A-hat}
\hatt A:\RR\to\mathcal{B}\big(H^2(\RR),L^2(\RR)\big),\;\hatt A(\xi_2)=-\cF_1^{-1}\cM_{Q_\gamma(\cdot,\xi_2)}\cF_1+c\partial_{y_1}+\cM_{f'(\ow)}.
\end{equation} 
Here $\cF_1$ denotes the Fourier transform with respect to the variable $y_1\in\RR$. For more details we refer to \cite[Section 2]{MaMo}.

To study the Lyapunov linear stability of the front $\ow$ we study the family of semigroups generated by $\hatt A(\xi_2)$, $\xi_2\in\RR$. We recall that the function $Q_\gamma$ has the representation:
\begin{equation*}
Q_\gamma(\xi_1,\xi_2)=\left\{\begin{array}{ll} \xi_2^2\Big(p\Big(\frac{\xi_1}{\xi_2}\Big)+g\Big(\frac{\xi_1}{\xi_2}\Big)\Big),& \xi_1\in\RR,\,\xi_2\in\RR\setminus\{0\},\\
N_0^2\xi_1^2,& \xi_1\in\RR,\,\xi_2=0, \end{array}\right.\;\mbox{where},
\end{equation*}
\begin{equation*}
p(s)=N_0^2(s-\eta_1)^2+\eta_0,\quad g(s)=\frac{\beta_1s+\beta_0}{s^2+1}.
\end{equation*}
as pointed out in \eqref{rep-Q-gamma} and \eqref{def-p-g}. Also, we recall that the constants $N_0$, $\beta_0$, $\beta_1$, $\eta_0$ and $\eta_1$ depend on $\nu_1$, $\nu_2$ and $\gamma$ only. For the exact formulas we refer to \cite[Formula (2.20)]{MaMo}. From \eqref{rep-Q-gamma} one can readily check that 
\begin{equation}\label{hat-A-bidomain}
\hatt A(0)=N_0^2\partial _{y_1}^2+c\partial_{y_1}+\cM_{f'(\ow)}.
\end{equation}
The linear operator $\hatt A(0)$ is a second order differential operator, therefore we can determine the properties of its spectrum, see for instance \cite{KP,RS2,S,Volpert3}. In the next lemma we summarize the most relevant properties of the spectrum of $\hatt A(0)$ given in \cite{MaMo}.
\begin{lemma}\cite[Proposition 3.1]{MaMo}\label{l5.1}
The operator $\hatt A(0)$ satisfies the following properties:
\begin{enumerate}
\item[(i)] There exists $\nu>0$ such that $\sup\,\mathrm{Re}\big(\sigma(\hatt A(0))\setminus\{0\}\big)\leq-\nu$;
\item[(ii)] $0$ is a simple eigenvalue of $\hatt A(0)$. Moreover, $\mathrm{Ker}\,\hatt A(0)=\mathrm{Span}\{\ow'\}$;
\item[(iii)] There exists $M_\rmb>0$ such that 
\begin{equation*}
\big\|R\big(\lambda,\hatt A(0)\big)\big\|\leq\frac{M_\rmb}{|\lambda|}\;\mbox{whenever}\;\mathrm{Re}\,\lambda\geq 0, \lambda\ne 0.
\end{equation*}	
\end{enumerate}
\end{lemma}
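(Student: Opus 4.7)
The plan is to establish the three properties in the order (ii), (i), (iii), since each builds on the previous. For (ii), I would start by differentiating the traveling-wave profile equation $N_0^2 \ow'' + c\ow' + f(\ow) = 0$ with respect to $y_1$, which immediately yields $\hatt A(0)\ow' = 0$. The monotone bistable profile guaranteed by \eqref{bidomain-TW-2} ensures $\ow' \in H^2(\RR)$ with pointwise sign $\ow' < 0$. Since $\hatt A(0)v = 0$ is a second-order linear ODE, its kernel is at most two-dimensional, but any solution independent of $\ow'$ must grow exponentially at $+\infty$ or $-\infty$: the asymptotic characteristic equation $N_0^2 \mu^2 + c\mu + f'(j) = 0$ for $j \in \{0,1\}$ has roots of opposite signs, because $f'(0)<0$ and $f'(1)<0$ under the bistable hypothesis. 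Thus $\dim\mathrm{Ker}\,\hatt A(0) = 1$. Algebraic simplicity would follow from the Fredholm alternative: the formal adjoint has a one-dimensional kernel spanned by $\psi^*(y_1) = e^{cy_1/N_0^2}\ow'(y_1)$, and $\int_{\RR} e^{cy_1/N_0^2}(\ow'(y_1))^2\,dy_1 \neq 0$ rules out a non-trivial $H^2$ solution of $\hatt A(0)\psi = \ow'$.

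For (i), I would invoke Weyl's essential-spectrum theorem together with the exponential localization in \eqref{bidomain-TW-2}: the essential spectrum of $\hatt A(0)$ coincides with that of the constant-coefficient limiting operators $N_0^2\partial_{y_1}^2+c\partial_{y_1}+f'(j)$ for $j\in\{0,1\}$, namely the union of two parabolas $\{-N_0^2\xi^2+ic\xi+f'(j):\xi\in\RR\}$, whose suprema of real parts equal $f'(0)$ and $f'(1)$, both strictly negative by bistability. For the point spectrum lying outside the essential set, since $\ow'$ has no zeros it is the principal eigenfunction of $\hatt A(0)$, and Sturm-type oscillation arguments, implemented after conjugation by $e^{cy_1/(2N_0^2)}$ which symmetrizes the operator up to a shift, force every real discrete eigenvalue to satisfy $\lambda \leq 0$ with equality realized only by $\ow'$. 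Discreteness of $\sigma(\hatt A(0))\setminus\sigma_{\mathrm{ess}}(\hatt A(0))$ then delivers a uniform gap $\nu > 0$ separating $0$ from the rest of the spectrum.

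For (iii), I would combine (i) and (ii) via a three-region splitting of $\{\mathrm{Re}\,\lambda \geq 0,\,\lambda \ne 0\}$. Near $\lambda = 0$, property (ii) makes $0$ a simple pole of the resolvent, so $\lambda \mapsto \lambda R(\lambda,\hatt A(0))$ extends analytically to a neighborhood of $0$, yielding $\|R(\lambda,\hatt A(0))\| \leq C_0/|\lambda|$ locally. On the intermediate compact region $\{\tfrac12 \leq |\lambda| \leq R_0,\,\mathrm{Re}\,\lambda \geq 0\}$, which contains no spectrum by (i)--(ii), continuity on a compact set gives a bound $C_1$, whence $\|R(\lambda,\hatt A(0))\| \leq 2C_1R_0/|\lambda|$. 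For $|\lambda| \geq R_0$ with $R_0$ large compared to the sector vertex $a$, the scalar elliptic operator $\hatt A(0)$ is sectorial by the same argument as in Lemma~\ref{l4.1}, and the sectorial estimate $\|R(\lambda,\hatt A(0))\| \leq M_0/|\lambda-a|$ converts to $\|R(\lambda,\hatt A(0))\| \leq 2M_0/|\lambda|$. Taking $M_\rmb$ as the maximum of the three constants closes the proof. The main obstacle is securing the sharp statement ``the closed right half-plane contains no spectrum other than $0$,'' needed to feed the compact-set continuity argument: the conjugation by $e^{cy_1/(2N_0^2)}$ is the standard device, but one must handle carefully that $\ow'$ need not lie in the resulting weighted $L^2$ and that the essential spectrum can shift under conjugation, so the cleanest completion is via an Evans-function computation exploiting the monotonicity of $\ow$ to rule out non-zero eigenvalues with non-negative real part.
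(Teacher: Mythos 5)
The paper does not prove this lemma at all: it is imported verbatim as \cite[Proposition 3.1]{MaMo}, so there is no internal proof to compare against, and any derivation you give is necessarily ``a different route'' from the paper's (which is simply a citation). That said, your sketch is essentially the standard traveling-wave spectral analysis that underlies the Matano--Mori result, and the architecture is sound: differentiating the profile equation to get $\hatt A(0)\ow'=0$, using hyperbolicity of the asymptotic characteristic equations $N_0^2\mu^2+c\mu+f'(j)=0$ (roots of opposite sign since $f'(0),f'(1)<0$) to pin the kernel to one dimension, Weyl's theorem for the essential spectrum, and the three-region splitting (simple pole at $0$, compactness on an intermediate annulus, sectoriality at infinity) for the resolvent bound in the closed right half-plane. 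Two points deserve more care than the sketch gives them. First, for algebraic simplicity you need the adjoint eigenfunction $e^{cy_1/N_0^2}\ow'$ to actually lie in $L^2$; this holds because $\ow'$ decays at $+\infty$ like the stable root and at $-\infty$ like the unstable root of the respective characteristic equations, and the weight $e^{cy_1/N_0^2}$ exactly exchanges these rates --- you should verify this against the decay rate $\rho_0$ in \eqref{bidomain-TW-2} rather than assert $\int e^{cy_1/N_0^2}(\ow')^2\,dy_1<\infty$ by fiat. Second, you correctly flag that the Sturm/conjugation argument only controls real discrete eigenvalues and that the weighted space complicates matters; the clean way to exclude \emph{all} nonzero eigenvalues with $\mathrm{Re}\,\lambda\geq 0$ for a monotone bistable front is indeed either an Evans-function or a maximum-principle/comparison argument, and without one of these spelled out assertion (i) is not yet closed. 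With those two gaps filled, your argument is a legitimate self-contained proof of what the paper takes as an external input.
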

\begin{remark}\label{r5.2}
From Lemma~\ref{lemma-Lunardi} and Lemma~\ref{l5.1} (ii) and (iii), we infer that $\hatt A(0)$ is a sectorial operator, hence $\hatt A(0)$ satisfies Hypothesis (H1). Moreover, from Lemma~\ref{l5.1} (i) and (ii) it follows that $\hatt A(0)$ satisfies Hypothesis (H2').	
\end{remark}
We recall the following notation needed to formulate \eqref{spectral-stable}, a sufficient condition for spectral stability of the planar front $\ow$,
\begin{equation}\label{g-bar-Delta}
g_{\mathrm{inf}}=\inf_{s\in\RR}g(s),\;g_{\mathrm{sup}}=\sup_{s\in\RR}g(s),\;\og=\frac{g_{\mathrm{inf}}+g_{\mathrm{sup}}}{2},\;g_\Delta=\frac{g_{\mathrm{sup}}-g_{\mathrm{inf}}}{2}.
\end{equation}
A crucial ingredient in the proof of \eqref{spectral-stable} is the identity 
\begin{equation}\label{hat-A-0-xi}
\hatt A(\xi_2)=\cM_{e^{\rmi\eta_1\xi_2\cdot}}\big(\hatt A(0)+H(\xi_2)\big)\cM_{e^{-\rmi\eta_1\xi_2\cdot}}\;\mbox{for any}\;\xi_2\in\RR,
\end{equation}
where $H:\RR\to\mathcal{B}\big(L^2(\RR)\big)$ is defined by
\begin{equation}\label{def-H}
H(\xi_2)=\left\{\begin{array}{ll} -\xi_2^2\cF_1^{-1}\cM_{g(\frac{\cdot}{\xi_2}+\eta_1)}\cF_1-(\eta_0\xi_2^2-c\rmi\eta_1\xi_2)I_{L^2(\RR)},& \xi_2\in\RR\setminus\{0\},\\
0,& \xi_2=0. \end{array}\right.
\end{equation}
\begin{remark}\label{r5.3}
From \cite[Lemma 4.1]{MaMo} we see that the operator valued function $H$ is continuous on $\RR$ in $\mathcal{B}\big(L^2(\RR)\big)$-norm. Moreover, assuming that $\eta_0>M_\rmb g_\Delta-\og$ and using the same argument as in \cite[Theorem 3.2]{MaMo} we can prove that 
\begin{equation}\label{spectrum-A-H}
\sigma\big(\hatt A(0)+H(\xi_2)\big)\subseteq\{\lambda\in\CC:\mathrm{Re}\lambda\leq-(\eta_0-M_\rmb g_\Delta+\og)\xi_2^2\}\;\mbox{for any}\;\xi_2\in\RR;
\end{equation}
\begin{equation}\label{resolvent-A-H}
\|R\big(\hatt A(0)+H(\xi_2)\big)\|\leq\frac{M_\rmb}{\mathrm{Re}\lambda+(\eta_0-M_\rmb g_\Delta+\og)\xi_2^2}\;\mbox{whenever}\;\mathrm{Re}\lambda>-(\eta_0-M_\rmb g_\Delta+\og)\xi_2^2.
\end{equation}
To check the two properties above we first define $\tH:\RR\to\mathcal{B}\big(L^2(\RR)\big)$ by
\begin{equation}\label{def-tilde-H} \tH(\xi_2):=H(\xi_2)+\Big((\eta_0+\og)\xi_2^2-c\rmi\eta_1\xi_2\Big)I_{L^2(\RR)}.
\end{equation}	
We note that \eqref{spectrum-A-H} and \eqref{resolvent-A-H} hold true if $\xi_2=0$. Next, we fix $\xi_2\in\RR\setminus\{0\}$ and $\lambda\in\CC$ such that $\mathrm{Re}\lambda>-(\eta_0-M_\rmb g_\Delta+\og)\xi_2^2$ and we set $\tlambda:=\lambda+(\eta_0+\og)\xi_2^2-c\rmi\eta_1\xi_2$. Using \eqref{g-bar-Delta} one can readily check that 
\begin{equation}\label{r5.3.1}
\mathrm{Re}\tlambda=\mathrm{Re}\lambda+(\eta_0+\og)\xi_2^2>M_\rmb g_\Delta\xi_2^2>0\;\mbox{and}\;\|\tH(\xi_2)\|\leq\xi_2^2\Big\|\og-g\big(\frac{\cdot}{\xi_2}+\eta_1\big)\Big\|_\infty\leq\xi_2^2 g_\Delta.
\end{equation}
From \eqref{r5.3.1} and Lemma~\ref{l5.1} we have $\tlambda\in\rho\big(\hatt A(0)\big)$ and $\big\|R\big(\tlambda,\hatt A(0)\big)\big\|\leq\frac{M_\rmb}{|\tlambda|}\leq\frac{M_\rmb}{\mathrm{Re}\tlambda}$, which implies 
\begin{equation}\label{r5.3.2}
\big\|\tH(\xi_2)R\big(\tlambda,\hatt A(0)\big)\big\|\leq\|\tH(\xi_2)\|\,\big\|R\big(\tlambda,\hatt A(0)\big)\big\|\leq\xi_2^2 g_\Delta\frac{M_\rmb}{\mathrm{Re}\tlambda}<1.
\end{equation}
Using elementary spectral theory, from \eqref{r5.3.2} we obtain that $\tlambda\in\rho\big(\hatt A(0)+\tH(\xi_2)\big)$, hence $\lambda\in\rho\big(\hatt A(0)+H(\xi_2)\big)$. Moreover, 
\begin{align}\label{r5.3.3}
\big\|R\big(\lambda,\hatt A(0)+H(\xi_2)\big)\big\|&=\big\|R\big(\tlambda,\hatt A(0)+\tH(\xi_2)\big)\big\|\leq\frac{\big\|R\big(\tlambda,\hatt A(0)\big)\big\|}{1-\big\|\tH(\xi_2)R\big(\tlambda,\hatt A(0)\big)\big\|}\nonumber\\
&\leq\frac{M_\rmb}{\mathrm{Re}\tlambda-M_\rmb g_\Delta\xi_2^2}=\frac{M_\rmb}{\mathrm{Re}\lambda+(\eta_0-M_\rmb g_\Delta+\og)\xi_2^2},
\end{align}
proving \eqref{spectrum-A-H} and \eqref{resolvent-A-H}.
\end{remark}	
To prove uniform estimates for the family of semigroups generated by $\hatt A(\xi_2)$, $\xi_2\in\RR$, we introduce the operator valued functions $E_\pm:[0,\infty)\to\mathcal{B}\big(L^2(\RR)\big)$ defined by
\begin{equation}\label{def-E-pm} E_\pm(\alpha):=H(\pm\sqrt{\alpha})\mp c\rmi\eta_1\sqrt{\alpha}I_{L^2(\RR)}.
\end{equation}
\begin{lemma}\label{l5.4} Assume that $\eta_0>M_\rmb g_\Delta-\og$. Then, the family of operators $\tA_\pm(\alpha):=\hatt A(0)+E_\pm(\alpha)$, $\alpha\geq 0$, satisfies Hypotheses (H3) and (H4).
\end{lemma}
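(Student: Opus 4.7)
The plan is to reduce everything to the estimates already established in Remark~\ref{r5.3}. First, I would compute $E_\pm(\alpha)$ in closed form by unwinding the definition \eqref{def-H} of $H$. A direct calculation shows
\[
E_\pm(\alpha) = H(\pm\sqrt\alpha) \mp c\rmi\eta_1\sqrt\alpha\, I_{L^2(\RR)} = -\alpha\,\bigl[\cF_1^{-1}\cM_{g(\pm\cdot/\sqrt\alpha+\eta_1)}\cF_1 + \eta_0 I_{L^2(\RR)}\bigr],
\]
so that the $c\rmi\eta_1\sqrt\alpha$ term in $H$ is canceled exactly by the correction $\mp c\rmi\eta_1\sqrt\alpha I$ introduced in the definition \eqref{def-E-pm} of $E_\pm$. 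In particular, $E_\pm(\alpha) = \alpha\, E_0^\pm(\alpha)$ with
\[
E_0^\pm(\alpha) = -\cF_1^{-1}\cM_{g(\pm\cdot/\sqrt\alpha+\eta_1)}\cF_1 - \eta_0 I_{L^2(\RR)}.
\]
Since the Fourier transform is a unitary on $L^2(\RR)$ and the multiplication operator by a bounded function $g$ has operator norm $\|g\|_\infty = \max\{|g_{\mathrm{sup}}|,|g_{\mathrm{inf}}|\}$, we obtain $\|E_0^\pm(\alpha)\| \leq \|g\|_\infty + |\eta_0|$ uniformly in $\alpha > 0$, verifying Hypothesis (H4).

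For Hypothesis (H3)(i), I would simply observe that $\tA_\pm(\alpha) = \bigl(\hatt A(0) + H(\pm\sqrt\alpha)\bigr) \mp c\rmi\eta_1\sqrt\alpha\, I_{L^2(\RR)}$ differs from the operator treated in Remark~\ref{r5.3} only by an additive purely imaginary scalar. Hence translating $\lambda\mapsto\lambda \pm c\rmi\eta_1\sqrt\alpha$ in \eqref{spectrum-A-H}–\eqref{resolvent-A-H}, and noting that this shift preserves real parts, yields
\[
\sigma(\tA_\pm(\alpha))\subseteq\{\lambda\in\CC:\mathrm{Re}\,\lambda\leq -q(\alpha)\},\quad \|R(\lambda,\tA_\pm(\alpha))\|\leq\frac{M_\rmb}{\mathrm{Re}\,\lambda+q(\alpha)}
\]
whenever $\mathrm{Re}\,\lambda > -q(\alpha)$, with the choice $q(\alpha) := (\eta_0 - M_\rmb g_\Delta + \og)\,\alpha$. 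The positivity $q_1 := \eta_0 - M_\rmb g_\Delta + \og > 0$ is exactly the hypothesis of the lemma, and Hypothesis (H3)(ii) is satisfied \emph{for every} $q_2 > 0$ since $q$ is linear on the whole half-line.

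For Hypothesis (H3)(iii), the same bound used for (H4) gives
\[
\frac{\|E_\pm(\alpha)\|}{q(\alpha)} = \frac{\|E_0^\pm(\alpha)\|}{q_1} \leq \frac{\|g\|_\infty + |\eta_0|}{q_1},
\]
so the $\limsup$ as $\alpha\to\infty$ is finite. Continuity of $E_\pm$ on $[0,\infty)$ follows from the continuity of $H$ established in \cite[Lemma 4.1]{MaMo} together with the continuity of $\sqrt\alpha$ and the cancellation above (which in particular removes the possibly singular term near $\alpha=0$), and $E_\pm(0) = 0$ is immediate. This completes the verification of all four clauses of (H3) and (H4).

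There is really no serious obstacle here: the content of the lemma is precisely that the reparametrization $\alpha = \xi_2^2$ together with the imaginary conjugation in \eqref{hat-A-0-xi} repackages the spectral stability result \eqref{spectrum-A-H}–\eqref{resolvent-A-H} into exactly the abstract form demanded by Section~\ref{s3}. The only point requiring care is to ensure that the $\sqrt\alpha$-terms, which would spoil the linear decay rate $q_1\alpha$, cancel; this is precisely the purpose of the explicit $\mp c\rmi\eta_1\sqrt\alpha\, I$ subtraction in \eqref{def-E-pm}.
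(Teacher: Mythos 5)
Your proposal is correct and follows essentially the same route as the paper: translate the spectrum and resolvent of $\hatt A(0)+H(\pm\sqrt\alpha)$ by the purely imaginary scalar $\mp c\rmi\eta_1\sqrt\alpha$ (which preserves real parts), invoke the estimates \eqref{spectrum-A-H}--\eqref{resolvent-A-H} of Remark~\ref{r5.3} with $q(\alpha)=(\eta_0-M_\rmb g_\Delta+\og)\alpha$, and bound $\|E_\pm(\alpha)\|/\alpha$ by $\|g\|_\infty+|\eta_0|$ for (H3)(iii) and (H4). The only cosmetic difference is that you write $E_\pm(\alpha)$ in closed form to exhibit the cancellation of the $\sqrt\alpha$-term, whereas the paper records the same fact as the identity \eqref{l5.4.1}; both arguments are equivalent.
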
	
\begin{proof} 
From Remark~\ref{r5.3} and \eqref{def-E-pm} we immediately conclude that the operator valued function $E_\pm$ is continuous on $[0,\infty)$ in $\mathcal{B}\big(L^2(\RR)\big)$-norm. It follows from \eqref{def-E-pm} that 
\begin{align}\label{l5.4.1}
\sigma\big(\hatt A(0)+E_\pm(\alpha)\big)&=\sigma\big(\hatt A(0)+H(\pm\sqrt{\alpha})\big)\mp c\rmi\eta_1\sqrt{\alpha},\nonumber\\R\big(\lambda,\hatt A(0)+E_\pm(\alpha)\big)&=R\big(\lambda\pm \rmi\eta_1\sqrt{\alpha},\hatt A(0)+H(\pm\sqrt{\alpha})\big)
\end{align}
for any $\alpha\geq 0$ and any $\lambda\in\rho\big(\hatt A(0)+E_\pm(\alpha)\big)$. From \eqref{spectrum-A-H}, \eqref{resolvent-A-H} and \eqref{l5.4.1} we infer that 
\begin{equation}\label{l5.4.2}
\sigma\big(\hatt A(0)+E_\pm(\alpha)\big)\subseteq\{\lambda\in\CC:\mathrm{Re}\lambda\leq-(\eta_0-M_\rmb g_\Delta+\og)\alpha\}\;\mbox{for any}\;\alpha\geq 0;
\end{equation}
\begin{equation}\label{l5.4.3}
\|R\big(\hatt A(0)+E_\pm(\alpha)\big)\|\leq\frac{M_\rmb}{\mathrm{Re}\lambda+(\eta_0-M_\rmb g_\Delta+\og)\alpha}\;\mbox{whenever}\;\mathrm{Re}\lambda>-(\eta_0-M_\rmb g_\Delta+\og)\alpha.
\end{equation}
Let $q:[0,\infty)\to[0,\infty)$ be the function defined by $q(\alpha)=(\eta_0-M_\rmb g_\Delta+\og)\alpha$. From \eqref{l5.4.2} and \eqref{l5.4.3} we see that conditions (i) and (ii) of Hypothesis (H3) are satisfied. Since $g\in L^\infty(\RR)$, by \eqref{def-p-g}, from \eqref{def-H} and \eqref{def-E-pm} we obtain
\begin{equation}\label{l5.4.4}
\|E_\pm(\alpha)\|\leq \alpha(\|g\|_\infty+\eta_0)\;\mbox{for any}\;\alpha\geq 0,
\end{equation}
which implies that $\limsup\limits_{\alpha\to\infty}\frac{\|E_\pm(\alpha)\|}{q(\alpha)}\leq\|g\|_\infty+\eta_0<\infty$. Hence, condition (iii) of Hypothesis (H3) is satisfied. Finally,  it follows from \eqref{l5.4.4} that Hypothesis (H4) is satisfied, proving the lemma.  
\end{proof}
We are now ready to prove the main result of this section, the Lyapunov linear stability of the planar front $\ow$.
\begin{proof}[Proof of Proposition~\ref{p1.4}] From Remark~\ref{r5.2} and Lemma~\ref{l5.4} the family of operators $\tA_\pm(\alpha)=\hatt A(0)+E_\pm(\alpha)$, $\alpha\geq 0$, satisfies Hypotheses (H1), (H2'), (H3) and (H4). Hence, from Theorem~\ref{t1.2} we conclude that the family of analytic semigroups generated by $\tA_\pm(\alpha)$ is uniformly exponentially stable for $\alpha\geq0$. Setting $\varkappa=\frac{1}{2}$ in Theorem~\ref{t1.2} we see that there exists a constant $\tM_\rmb>0$ such that 
\begin{equation}\label{t5.5.1}
\|e^{t\tA_\pm(\alpha)}\|\leq \tM_\rmb e^{-\frac{(\eta_0-M_\rmb g_\Delta+\og)\alpha}{2}t}\;\mbox{for any}\;t\geq0,\, \alpha\geq0.
\end{equation}
From \eqref{def-E-pm} we have  
\begin{equation}\label{t5.5.2}
H(\xi_2)=E_\sigma(\xi_2^2)+c\rmi\eta_1\xi_2I_{L^2(\RR)}\;\mbox{for any}\;\xi_2\in\RR,\;\mbox{where}\;\sigma=\left\{\begin{array}{ll} +,& \xi_2\geq0,\\
-,& \xi_2<0. \end{array}\right.
\end{equation}
From \eqref{hat-A-0-xi} and \eqref{t5.5.2} one can readily check that 
\begin{equation}\label{t5.5.3}
e^{t\hatt A(\xi_2)}=\cM_{e^{\rmi\eta_1\xi_2\cdot}}\,e^{t\big(\hatt A(0)+H(\xi_2)\big)}\cM_{e^{-\rmi\eta_1\xi_2\cdot}}=e^{c\rmi\eta_1\xi_2t}\cM_{e^{\rmi\eta_1\xi_2\cdot}}\,e^{t\tA_\sigma(\xi_2^2)}\cM_{e^{-\rmi\eta_1\xi_2\cdot}}
\end{equation}	
for any $t\geq0$ and $\xi_2\in\RR$.	From \eqref{t5.5.1} and \eqref{t5.5.3} we conclude that 
\begin{equation}\label{t5.5.4}
\|e^{t\hatt A(\xi_2)}\|\leq \tM_\rmb e^{-\frac{(\eta_0-M_\rmb g_\Delta+\og)\xi_2^2}{2}t}\;\mbox{for any}\;t\geq0,\, \xi_2\in\RR.
\end{equation}
Since $\cA$, the linearization along the front $\ow$, is unitary equivalent to $\cM_{\hatt A}$, from \eqref{t5.5.4} we see that the semigroup generated by $\cA$ is bounded, proving that the front $\ow$ is Lyapunov linearly stable.
\end{proof}

\appendix

\section{Some Examples}\label{s-a}
In this section we give two examples to show that by perturbing an operator satisfying Hypotheses (H1) and (H2) we might have unstable spectrum, even if the space is finite dimensional and the perturbation is a self-adjoint, bounded, uniformly negative definite operator. 
\begin{example}\label{example-point-spectrum-jump}
Let $J_0=\begin{bmatrix}
0&1\\
0&0\end{bmatrix}$ and $Z_0=\begin{bmatrix}
1&b_0\\
b_0&\frac{1}{4}\end{bmatrix}$ with $b_0\in\big(-\frac{1}{2},\frac{1-\sqrt{2}}{2}\big)$. 
One can readily check that 
\begin{equation}\label{e1.1.1}
\Big\langle Z_0\left(\begin{matrix}u\\
v
\end{matrix}\right),\left(\begin{matrix}u\\
v
\end{matrix}\right)\Big\rangle_{\CC^2}=|u|^2+2b_0\mathrm{Re}(u\overline{v})+\frac{1}{4}|v|^2\geq(|u|-b_0|v|)^2+\Big(\frac{1}{4}-b_0^2\Big)|v|^2\;\mbox{for any}\;u,v\in\CC.
\end{equation}
From \eqref{e1.1.1} we infer that $Z_0$ is a symmetric, positive definite matrix.  Since  $b_0\in\big(-\frac{1}{2},\frac{1-\sqrt{2}}{2}\big)$, a simple computation shows that $\sigma(J_0-Z_0)=\{\mu_0^*,\lambda_0^*\}$, where $\mu_0^*=-\frac{5}{8}-\sqrt{\frac{25}{64}-(1+4b_0-4b_0^2)}<0$ and $\lambda_0^*=-\frac{5}{8}+\sqrt{\frac{25}{64}-(1+4b_0-4b_0^2)}>0$. We obtain 
\begin{equation}\label{e1.1.2}
b_1:=\frac{1}{4}\min\{\lambda_0^*,m_0\}>0,\;\mbox{where}\;m_0=\min\Big\{\Big|Z_0^{\frac{1}{2}}\left(\begin{matrix}u\\
v
\end{matrix}\right)\Big|^2:\Big|\left(\begin{matrix}u\\
v
\end{matrix}\right)\Big|=1\Big\}.
\end{equation}
It follows that $\sigma(J_0-b_1I_2)=\{-b_1\}$ is stable, $Y_0:=b_1I_2-Z_0$ is a symmetric, uniformly negative definite matrix and $\lambda_0^*\in\sigma(J_0-b_1I_2+Y_0)=\sigma(J_0-Z_0)$. We introduce the matrices
\begin{equation}\label{e1.1.3}
A_0=\begin{bmatrix}
-b_1&1&0\\
0&-b_1&0\\
0&0&0\end{bmatrix}\;\mbox{and}\; W_0=\begin{bmatrix}
b_1-1&-b_0&0\\
-b_0&b_1-\frac{1}{4}&0\\
0&0&-1\end{bmatrix}.
\end{equation}
One can readily check that $0$ is a simple eigenvalue of $A_0$ and that $\sigma(A_0)=\{0,-b_1\}$. Since any bounded operator is sectorial, we immediately infer that $A_0$ defined in \eqref{e1.1.3} satisfies Hypotheses (H1) and (H2). Since $W_0=Y_0\oplus -1$ we have $W_0$ is a symmetric, uniform negative definite matrix. However, $A_0+W_0=(J_0-Z_0)\oplus-1$, which shows that $\sigma(A_0+W_0)=(J_0-Z_0)\cup\{-1\}$, hence $\lambda_0^*\in\sigma(A_0+W_0)\cap(0,\infty)$. 
\end{example}
This example shows that by perturbing an operator that satisfies Hypotheses (H1) and (H2) we might generate  unstable  point spectrum, even if the perturbation is symmetric, negative definite. Using a similar argument, one can see that the essential spectrum can become unstable under the same type of perturbation. 
\begin{example}\label{example-essential-spectrum-jump}
We set $\bH=L^2([1,2],\CC^2)\oplus\CC$ and let $\tA_0,\tW_0:\bH\to\bH$ be the bounded linear operators defined by 
\begin{equation}\label{def-tilde-A-W-0}
\tA_0=\cM_{J_1}\oplus 0,\quad \tW_0=\cM_{Y_1}\oplus-1,
\end{equation}
where $\cM_{J_1}$ and $\cM_{Y_1}$ are the multiplication operators on $L^2([1,2],\CC^2)$ by the matrix valued functions $J_1,Y_1:[1,2]\to\CC^{2\times2}$ defined by $J_1(s)=s(J_0-b_1I_2)$ and $Y_1(s)=sY_0=s(b_1I_2-Z_0)$. Here the matrices $J_0$, $Z_0$ and $Y_0$ were introduced in Example~\ref{example-point-spectrum-jump} above. We recall that $\sigma(J_0-b_1I_2)=\{-b_1\}$ and $\sigma(J_0-Z_0)=\{\mu_0^*,\lambda_0^*\}$, with $\mu_0^*<0<\lambda_0^*$.

Since the matrix $Y_0$ is symmetric, uniform negative definite, we immediately infer that $\cM_{Y_1}$ is self-adjoint, uniform negative definite on $L^2([1,2],\CC^2)$. Hence, $\tW_0$ is self-adjoint, uniformly negative definite on $\bH$. Moreover, since the functions $J_1$ and $Y_1$ are continuous by \cite{HW}, it follows that
\begin{align}\label{e1.2.1} 
\sigma(\cM_{J_1})&=\sigma_{\mathrm{ess}}(\cM_{J_1})=\overline{\bigcup_{s\in[1,2]}\sigma(J_1(s))}=[-2b_1,-b_1],\nonumber\\
\sigma(\cM_{J_1+Y_1})&=\sigma_{\mathrm{ess}}(\cM_{J_1+Y_1})=\overline{\bigcup_{s\in[1,2]}s\sigma(J_0-Z_0)}=[-2\mu_0^*,-\mu_0^*]\,\cup[\lambda_0^*,2\lambda_0^*].
\end{align}
Since the linear operator $A_0$ is bounded on $\bH$ we have it is sectorial. Moreover, from \eqref{def-tilde-A-W-0} and \eqref{e1.2.1} we infer that $0$ is a simple eigenvalue of $\tA_0$ and $\sigma(\tA_0)\setminus\{0\}=\sigma(\cM_{J_1})=[-2b_1,-b_1]$, thus
$\sup\,\mathrm{Re}\big(\sigma(\tA_0)\setminus\{0\}\big)=-b_1<0$. Hence, $\tA_0$ satisfies Hypotheses (H1) and (H2). Even if $\tW_0$ is self-adjoint, uniform negative definite on $\bH$, from \eqref{e1.2.1} we have  $\sigma_{\mathrm{ess}}(\tA_0+\tW_0)=\sigma_{\mathrm{ess}}(\cM_{J_1+Y_1})=[-2\mu_0^*,-\mu_0^*]\,\cup[\lambda_0^*,2\lambda_0^*]$, which shows that $\sigma_{\mathrm{ess}}(\tA_0+\tW_0)\cap(0,\infty)=[\lambda_0^*,2\lambda_0^*]$ is nonempty.

\end{example}

\end{document}